\newtheorem{theorem}{Theorem}[section]
\newtheorem{lemma}[theorem]{Lemma}
\newtheorem{corollary}[theorem]{Corollary}
\newtheorem{proposition}[theorem]{Proposition}
\numberwithin{equation}{section}
\theoremstyle{definition}
\newtheorem{definition}[theorem]{Definition}
\newtheorem{remark}[theorem]{Remark}
\def\Cset{\mathbb{C}}
 \def\Qset{\mathbb{Q}}
 \def\Rset{\mathbb{R}}
 \def\Zset{\mathbb{Z}}
\def\leq{\leqslant }
\def\geq{\geqslant}
\def\A{\mathcal{A}}
\def\Ad{\A_d}
\def\zk{\Zset_{2k}}
\def\zl{\Zset_{\ell}}
\def\bzk{\breve\Zset_{2k}}
\def\bzl{\breve\Zset_{\ell}}
\def\mkl{\mathcal M_{k,\ell}}
\def\hr{H_1(\mkl,\Sigma,\Qset)}
\def\ha{H_1(\mkl,\Qset)}
\def\qe{Q_{even}}
\def\qo{Q_{odd}}
\def\rkl{\mathfrak {R}(2k,\ell)}
\def\rekl{\mathfrak {R}^*(2k,\ell)}
\def\ZP{(\Zset_\varPi)^*}
\def\Zpa{\Zset_{p^A}}
\def\Zpb{\Zset_{p^B}}
\def\Zpc{(\Zset_{p^C})^*}
\def\R{\mathcal{R}}
\def\Rd{\R_d}
\def\D{\mathcal{D}}
\def\Dd{\D_d}
\def\Qa{Q_{\alpha}}
\def\QA{Q'_{\alpha}}
\def\CAD{\Cset^{\Ad}}
\def\To{\longrightarrow}
\begin{document}

\title[Kontsevich--Zorich cocycle over Veech--McMullen family]{The Kontsevich--Zorich cocycle over Veech--McMullen family of symmetric translation surfaces}

\author{Artur Avila, Carlos Matheus and Jean-Christophe Yoccoz}

\date{\today}

\begin{abstract}
We describe the Kontsevich--Zorich cocycle over an affine invariant orbifold coming from a (cyclic) covering construction inspired by works of Veech and McMullen. In particular, using the terminology in a recent paper of Filip, we show that all cases of Kontsevich--Zorich monodromies of $SU(p,q)$ type are realized by appropriate covering constructions.

\end{abstract}
\maketitle

\tableofcontents

\section{The Veech--McMullen family of symmetric translation surfaces}\label{secdef}

\subsection{Definition and Notations}\label{ssdefnot}

Let $k$ be a positive integer, and let $\ell$ be an integer at least equal to $3$. We denote by $R$ the rotation of $\Cset$ centered at $0$ of angle $2\pi/ \ell$, by $S$ the symmetry $z \mapsto -\bar z$ w.r.t. the imaginary axis. 

\par

 We write $\Zset_m$ for the standard cyclic group with $m$ elements and $\breve \Zset_m$ for the $\Zset_m$-homogeneous
space of pairs of consecutive elements of $\Zset_m$. 

\par

 Let $Q_{even} \subset \Cset$ be the closed regular polygon whose vertices are the roots of unity of order $\ell$. Let $Q_{odd} := S(Q_{even}) =-Q_{even}$ . 

\par

Consider $k$ copies  of $Q_{even}$, indexed by the even elements of $\Zset_{2k}$, and $k$ copies  of $Q_{odd}$,  indexed by the odd elements of $\Zset_{2k}$. 

\par

The vertices of $\qe$ and $\qo$ are indexed by

$$ A(even,j) = R^j(1), \quad A(odd,j) = -A(even,j),\quad \forall j \in \zl.$$

\smallskip

For  $j' = (j,j+1) \in \bzl$, $\epsilon \in \{even,odd\}$, we denote by $M(\epsilon,j')$ the midpoint between $A(\epsilon,j)$ and $A(\epsilon,j+1)$, by $\aleph^+(\epsilon,j)$ the oriented segment from $A(\epsilon,j)$ to $M(\epsilon,j')$, and by $\aleph^-(\epsilon,j+1)$ the oriented segment from $A(\epsilon,j+1)$ to $M(\epsilon,j')$.

\par

For $j \in \zl$ and $i \in \zk$, with parity $\epsilon$, we denote by $A(i,j),\, M(i,(j,j+1)), \, \aleph^\pm (i,j)$ the copies in  $Q_i$ of $A(\epsilon,j),\, M(\epsilon,(j,j+1), \aleph^\pm(\epsilon,j)$.

\par

\begin{definition} 
The  translation surface $\mathcal M_{k,\ell}$ is obtained from the disjoint union of the $Q_i$, $i\in \zk$ by identifying through the appropriate translation, for each $i\in \zk, j \in \zl$, the segment $\aleph^+(i,j)$ with the segment $\aleph^-(i+1,j+1)$. We denote by $\aleph ((i,i+1),(j,j+1))$ the image of these segments in $\mkl$.
\end{definition}


\subsection{Basic properties and symmetries of the translation surface $\mkl$}\label{ssbasicprop}

We start by computing the ramification at the singular set, associated to the $M(i,(j,j+1))$ and $A(i,j)$.

\par
For each $j' \in \bzl$, the points $M(i,j'), i \in \zk$ are identified into a single point $M(j')$ on $\mkl$ where the total angle is $2\pi k$. 
\par
On the other hand, when rotating counterclockwise around $A(i,j)$, a sector of angle 
$\frac {\pi(\ell -2)}{\ell}$ in $Q_i$ is followed  by a sector of the same angle at $A(i-1,j-1)$ in $Q_{i-1}$. The points of $\mkl$ corresponding to the $A(i,j)$ are therefore naturally indexed by the orbits of the transformation $(i,j) \rightarrow (i-1,j-1)$ on $\zk \times \zl$. We denote  by $A(\varDelta)$ the point of $\mkl$ associated to an orbit $\varDelta$. The number of such orbits is the greatest common divisor $\varpi$ of $2k$ and $\ell$. The total angle at such a point $A(\varDelta)$ is 
$\frac {2\pi k(\ell -2)}\varpi$.

\smallskip

\par
We denote by $\Sigma$ the set of marked points $M(j'), A(\varDelta)$ of $\mkl$, of cardinality $\ell +\varpi$.

\smallskip

The genus of $\mkl$ is thus given by
$$g= \ell k +1-k- \frac 12(\ell +\varpi).$$
Observe that $\ell$ and $\varpi$ have the same parity.

\medskip

\begin{remark}
The translation surface $\mathcal M_{1,\ell}$ has been first studied by Veech \cite{Ve89}. He shows that it is a Veech surface and that the image of the Veech group in $PSL(2,\Rset)$ is the lattice generated by $R$ and the parabolic element 
$$ \left ( \begin{array}{cc} 1&0 \\ 2 \cot \frac \pi \ell & 1 \end{array} \right ) .$$
It follows easily that the the subset $\Sigma_M \subset \mathcal M_{1,\ell}$ consisting of the $\ell$ points $M_{j'}, \, j' \in \bzl$ is invariant under the group of affine homeomorphisms of $\mathcal M_{1,\ell}$. The same is true of the the subset $\Sigma_A$ consisting of the one (if $\ell$ is odd) or two (if $\ell$ is even) points $A(\varDelta)$.
\end{remark}

\begin{remark}
By the previous remark, the image in $\mathcal M_{1,\ell}$ of the ramification set of the natural projection $\mkl \to \mathcal M_{1,\ell}$ is invariant under the group of affine homeomorphisms of $\mathcal M_{1,\ell}$. It follows from a result of Gutkin--Judge \cite{GJ} that $\mkl$ is a Veech surface.
\end{remark}

\subsection{Covers of hyperelliptic components of strata} Algebraically, the translation surface $\mathcal{M}_{1,\ell}$ corresponds to the Riemann surface $y^2=(x-x_1)\dots(x-x_{\ell})$ together with the holomorphic one-form $c dx/y$ for \emph{appropriate} choices of $\ell$ distinct points $x_1,\dots, x_{\ell}\in\mathbb{C}$ and a constant $c\in\mathbb{C}^*$: see Veech \cite{Ve89}. More generally, $\mkl$ is the covering of $\mathcal{M}_{1,\ell}$ given by the Riemann surface $y^{2k} = (x-x_1)\dots(x-x_{\ell})$ and the holomorphic one-form $c dx/y^k$: see McMullen \cite{McM}. For this reason, we define the Veech--McMullen family $\mathcal{F}_{k,\ell}$ of translation surfaces the Riemman surfaces $y^{2k}=(x-x_1)\dots(x-x_{\ell})$ equipped with $c dx/y^k$ for \emph{arbitrary} choices of $\ell$ distinct points $x_1,\dots, x_{\ell}\in\mathbb{C}$ and constants $c\in\mathbb{C}^*$. 

The Veech--McMullen family $\mathcal{F}_{1,\ell}$ for $\ell$ odd, resp. $\ell$ even, is the hyperelliptic component of the stratum $\mathcal{H}(\underbrace{(\ell-2-\varpi)/\varpi, \dots, (\ell-2-\varpi)/\varpi}_{\varpi})$ of translation surfaces with $\varpi$ conical singularities with total angle $\frac {2\pi k(\ell -2)}\varpi$: see \cite{KZ03}. 

In general, the Veech--McMullen family $\mathcal{F}_{k,\ell}$ is an affine suborbifold of the stratum $\mathcal{H}(\underbrace{k-1,\dots, k-1}_{\ell}, \underbrace{k(\ell-2-\varpi)/\varpi, \dots, k(\ell-2-\varpi)/\varpi}_{\varpi})$ given by a covering construction. Therefore, the Kontsevich--Zorich cocycle over the Teichm\"uller geodesic flow on $\mathcal{F}_{k,\ell}$ is coded by the hyperelliptic Rauzy diagrams with arrows decorated by certain matrices (describing actions on the homology of canonical translation surfaces in $\mathcal{F}_{k,l}$). 

In particular, following the discussion in our previous paper \cite{AMY-hyp}, one can associate a \emph{Rauzy--Veech group} $RV(k,\ell)$ to $\mathcal{F}_{k,\ell}$. By definition, the matrices in $RV(k,\ell)$ preserve the natural symplectic intersection form on the absolute homology of the translation surfaces in $\mathcal{F}_{k,\ell}$. 

\begin{remark}\label{r.AMY-hyp} In this notation, the main result from our previous paper \cite{AMY-hyp} asserts that $RV(1,k)$ is naturally isomorphic to an explicit finite-index subgroup of the integral symplectic group $Sp(2g,\mathbb{Z})$ (where $g$ is the genus of $\mkl$).  
\end{remark}

\subsection{Statement of the main result} In this paper, we study the structure of the Rauzy--Veech groups of $\mathcal{F}_{k,\ell}$.

\begin{theorem}\label{t.A} The real Hodge bundle over $\mathcal{F}_{k,\ell}$ decomposes into a direct sum $H_1\oplus\dots\oplus H_k$ of flat subbundles $H_r$ associated to the eigenspaces of the generator of the deck group of $\mathcal{M}_{k,\ell}\to\mathcal{M}_{1,\ell}$ (cf. \eqref{e.Hr} and \eqref{e.Hk} below). The Rauzy--Veech group of $\mathcal{F}_{k,\ell}$ respects this decomposition. Moreover, if one denotes by $RV(k,\ell)|_{H_r}$ the group associated to the restrictions to $H_r$ of the matrices in $RV(k,\ell)$, then: 
\begin{itemize}
\item[(a)] $RV(k,\ell)|_{H_k}$ is naturally isomorphic to $RV(1,\ell)$; thus, $RV(k,\ell)|_{H_k}$ is isomorphic to a finite-index subgroup of $Sp(2g,\mathbb{Z})$;  
\item[(b)] for each $0<r<k$, the symplectic intersection form on $H_r$ induces a Hermitian form $Q_{r/2k}$ of signature $(\lceil \ell (r/2k) -1 \rceil, \lceil \ell(1-(r/2k)) -1 \rceil)$ which is preserved by $RV(k,\ell)|_{H_r}$; furthermore, 
\begin{itemize}
\item if $\ell (r/2k)<1$ and $r/2k \neq 1/6, 1/4$, then $RV(k,\ell)|_{H_r}\cap SU(Q_{r/2k})$ is dense in $SU(Q_{r/2k})$ (for the usual topology); 
\item if $\ell (r/2k)\notin\mathbb{Z}$ and $r/2k\neq 1/6, 1/4, 1/3$, then $RV(k,\ell)|_{H_r}\cap SU(Q_{r/2k})$ is Zariski dense in $SU(Q_{r/2k})$. 
\end{itemize}
\end{itemize} 
\end{theorem}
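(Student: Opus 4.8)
The plan is to exploit the cyclic symmetry of the cover at every step. Let $T$ be the generator of the order-$2k$ group of automorphisms of $\mkl$ covering the identity of $\Pset^1$ in the model $y^{2k}=(x-x_1)\cdots(x-x_\ell)$, namely $y\mapsto\zeta_{2k}y$; its square generates the deck group of $\mkl\to\mathcal M_{1,\ell}$, and $T$ itself covers the hyperelliptic involution of $\mathcal M_{1,\ell}$. Since $T$ is defined algebraically on every surface of $\mathcal F_{k,\ell}$, it commutes with the Kontsevich--Zorich cocycle, so the splitting of $H_1(\cdot,\Cset)$ into $T_*$-eigenspaces is parallel; taking conjugate-invariant real parts produces the flat decomposition $H_1\oplus\dots\oplus H_k$ of \eqref{e.Hr}--\eqref{e.Hk}, with $H_r=V_r\oplus\overline{V_r}$ (eigenvalue $e^{2\pi i r/2k}$ on $V_r$) for $0<r<k$ and $H_k$ the $(-1)$-eigenspace. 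Every Rauzy--Veech matrix is a monodromy of this cocycle along a loop, hence commutes with $T_*$ and preserves each $H_r$. For part (a) one notes that $H_k$ is exactly the $T^2$-invariant part, i.e. the pullback of the whole homology of the hyperelliptic base $\mathcal M_{1,\ell}$; the hyperelliptic Rauzy--Veech combinatorics of $\mkl$ project isomorphically onto those of $\mathcal M_{1,\ell}$, so $RV(k,\ell)|_{H_k}\cong RV(1,\ell)$, and Remark~\ref{r.AMY-hyp} identifies this with a finite-index subgroup of $Sp(2g,\Zset)$.

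For the Hermitian form in part (b), fix $0<r<k$. Complex conjugation commutes with the real operator $T_*$, so $\overline{V_r}=V_{2k-r}$, and the complexified symplectic form pairs $V_r$ with $V_{2k-r}$ while vanishing on $V_r$ itself; thus $(u,v)\mapsto i\langle u,\bar v\rangle$ is a nondegenerate $T_*$-invariant Hermitian form $Q_{r/2k}$ on $V_r$, automatically preserved by $RV(k,\ell)|_{H_r}$. Its signature equals $(\dim(H^{1,0}\cap V_r),\dim(H^{0,1}\cap V_r))$, and these eigenspace Hodge numbers are given by the Chevalley--Weil formula for the cyclic cover $y^{2k}=\prod_j(x-x_j)$ of $\Pset^1$: the holomorphic eigenforms are spanned by $x^a\,dx/y^r$ for $0\le a\le\lceil\ell r/2k\rceil-2$, giving $\lceil\ell(r/2k)-1\rceil$, and symmetrically $\lceil\ell(1-(r/2k))-1\rceil$ for the antiholomorphic count. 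This yields the stated signature.

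The substance of the theorem is the density statement, which I would prove by analysing the explicit generators of $RV(k,\ell)|_{H_r}$ coming from the hyperelliptic Rauzy diagram. Each elementary Rauzy--Veech move acts on absolute homology as a transvection, so on $V_r$ it restricts to a $Q_{r/2k}$-complex reflection with rotation factor $e^{2\pi i r/2k}$; the image in the projective unitary group is therefore generated by a configuration of complex reflections. The first task is to show this configuration is strongly irreducible on $V_r$, which I expect to follow from the transitivity of the hyperelliptic combinatorics together with the nondegeneracy afforded by $\ell(r/2k)\notin\Zset$. The second, and this is the main obstacle, is to prove that the Zariski closure of the generated reflection group is all of $SU(Q_{r/2k})$: here I would exhibit a proximal (pinching) element and invoke a Zariski-density criterion for complex reflection groups, so that the only way to fail is for the group to fall into the finite or proper-symmetric cases of the Shephard--Todd and Deligne--Mostow classifications. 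These exceptional cases are governed entirely by the local rotation parameter $r/2k$ and occur precisely at $r/2k\in\{1/6,1/4,1/3\}$; excluding them gives Zariski density of $RV(k,\ell)|_{H_r}\cap SU(Q_{r/2k})$.

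Finally I would upgrade Zariski density to topological density in the definite range $\ell(r/2k)<1$. There the first signature entry is $\lceil\ell(r/2k)-1\rceil=0$, so $Q_{r/2k}$ is definite and $SU(Q_{r/2k})$ is compact; for a compact simple Lie group, a Zariski-dense subgroup not contained in any proper closed subgroup is dense, and the strong irreducibility above rules out the proper closed subgroups (maximal tori and the lower-dimensional symmetric subgroups), the latter exclusions being exactly $1/6,1/4$ in this range. In the indefinite range $\ell(r/2k)\notin\Zset$ one cannot expect more than Zariski density, since $SU(Q_{r/2k})$ then contains lattices; this is why the second bullet asserts only Zariski density and must additionally exclude $r/2k=1/3$, the further value at which the reflection monodromy is thin. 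The delicate point throughout is the Zariski-density step, since it is precisely the identification of the finitely many exceptional parameters that requires matching the Rauzy--Veech reflection data against the Deligne--Mostow/Shephard--Todd lists.
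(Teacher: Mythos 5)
The first half of your proposal is essentially sound: the flat decomposition into deck-group eigenspaces, its invariance under the Rauzy--Veech matrices, and the identification $RV(k,\ell)|_{H_k}\cong RV(1,\ell)$ match the paper (cf. Remarks \ref{r.Hk} and \ref{r.RV-Hk}), and computing the signature of the Hodge-type Hermitian form by Chevalley--Weil eigenform counts is a legitimate alternative to the paper's explicit diagonalization in Subsection \ref{ss.signature} (the paper itself notes the answer is ``as expected from McMullen's paper''). The problem is the density statement, which is the substance of the theorem and which your proposal does not actually prove.

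Concretely, you replace the density argument by an appeal to an unspecified ``Zariski-density criterion for complex reflection groups'' under which the only failures would be the Shephard--Todd (finite) and Deligne--Mostow (lattice) cases, and you then \emph{assert} that these occur exactly at $r/2k\in\{1/6,1/4,1/3\}$. No such criterion exists in the generality needed, and identifying the exceptional parameters is precisely the content to be proved, not data to be read off a list. The logic is also off in both directions: a Deligne--Mostow lattice in $SU(p,q)$ \emph{is} Zariski dense (Borel density), so membership in that list is no obstruction to the second bullet; conversely, to fail Zariski density the group must lie in a proper real algebraic subgroup --- e.g.\ preserve a real structure, a splitting, or a smaller form --- and ruling this out is exactly the hard work. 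The paper does it by induction on $d=\ell-1$: an explicit rank-two base case (Subsection \ref{ssd2}: $L_{-1}^b\circ L_1^t$ has infinite order unless $\rho^4=1$ or $\rho^6=1$, and three conjugates of its infinitesimal generator span $su(Q_{\alpha})$), induction steps via stabilizer theorems resting on Witt's theorem (Propositions \ref{stabilizers} and \ref{propind1}), separate treatment of the degenerate cases $(d+1)\alpha\in\Zset$ and $d\alpha\in\Zset$ that the induction must pass through, and the real-irreducibility statement (Proposition \ref{propRirreduc}) that your ``strong irreducibility'' step would require but does not establish. Finally, your characterization of the exceptional values contradicts the paper: $r/2k=1/3$ is \emph{not} a thin case --- Subsection \ref{ss13} proves Zariski density there, and the value is omitted from the theorem only for cleanliness --- while $1/6$ and $1/4$ are exceptional because the $d=2$ group is finite and the $d=3$ group genuinely falls short of $SU^*(Q_{\frac 14})$ (Subsection \ref{ss14}), being repaired only from $d=4$ onward. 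The exclusions are tied to the low-dimensional base of an induction, not to maximal tori, symmetric subgroups, or any classification one can quote.
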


\begin{remark} Actually, our discussion in Section \ref{s.matrices} provides a precise version of Theorem \ref{t.A}: in particular, we compute the Zariski closure of $RV(\ell, k)|_{H_r}\cap SU(Q_{r/2k})$ in the exceptional cases $r/2k = 1/6, 1/4, 1/3$. However, we have not included all possibilities in  Theorem \ref{t.A} in order to get a ``cleaner'' statement. 
\end{remark}

This result provides explicit examples showing that all cases of $SU(p,q)$ Kontsevich--Zorich monodromies discussed in Filip's paper \cite{Fi} actually occur. 

A direct consequence of Theorem \ref{t.A} and the simplicity criterion of Avila--Viana \cite{AV} (as stated in Subsection 2.5 of \cite{MMY}) is: 

\begin{corollary}\label{c.A} The Lyapunov exponents of the restriction of the Kontsevich--Zorich cocycle over $\mathcal{F}_{k,\ell}$ to $H_r$ are ``simple'' in the sense that:
\begin{itemize}
\item they have multiplicity one when $r=k$;
\item for $0<r<k$, $\ell(r/2k)\notin\mathbb{Z}$ and $r/2k\neq 1/6, 1/4, 1/3$, the multiplicity of all non-zero Lyapunov exponents is one and there are exactly $\lceil \ell(1-(r/2k)) -1 \rceil-\lceil \ell (r/2k) -1 \rceil$ vanishing Lyapunov exponents. 
\end{itemize}
\end{corollary}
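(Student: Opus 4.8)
The plan is to deduce Corollary \ref{c.A} directly from Theorem \ref{t.A} by invoking the Avila--Viana simplicity criterion \cite{AV} in the form stated in Subsection 2.5 of \cite{MMY}. That criterion asserts, roughly, that if the Rauzy--Veech monodromy group acting on a flat subbundle is \emph{pinching} and \emph{twisting} (equivalently, satisfies a suitable Zariski density / strong irreducibility condition relative to the invariant structure), then the Lyapunov spectrum of the Kontsevich--Zorich cocycle restricted to that subbundle is as simple as the invariant geometric structure allows. So the entire argument is a matter of checking the hypotheses of that criterion on each $H_r$ using what Theorem \ref{t.A} already provides, and then reading off the multiplicities and the count of zero exponents.

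For the case $r=k$, I would argue as follows. By part (a) of Theorem \ref{t.A}, $RV(k,\ell)|_{H_k}$ is naturally isomorphic to $RV(1,\ell)$, which by Remark \ref{r.AMY-hyp} is a finite-index subgroup of the integral symplectic group $Sp(2g,\mathbb{Z})$. A finite-index subgroup of $Sp(2g,\mathbb{Z})$ is Zariski dense in $Sp(2g,\Rset)$ and, in particular, is pinching and twisting in the symplectic sense of \cite{AV}. Hence the Avila--Viana criterion applies and yields that every Lyapunov exponent of the cocycle restricted to $H_k$ is simple, i.e. has multiplicity one; this gives the first bullet.

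For $0<r<k$ with $\ell(r/2k)\notin\mathbb{Z}$ and $r/2k\neq 1/6,1/4,1/3$, I would use part (b) of Theorem \ref{t.A}. There the symplectic form induces a Hermitian form $Q_{r/2k}$ of signature $(p,q)$ with $p=\lceil \ell(r/2k)-1\rceil$ and $q=\lceil \ell(1-(r/2k))-1\rceil$, and the theorem guarantees that $RV(k,\ell)|_{H_r}\cap SU(Q_{r/2k})$ is Zariski dense in $SU(Q_{r/2k})$. The appropriate form of the simplicity criterion for a cocycle preserving a pseudo-Hermitian structure (as in Subsection 2.5 of \cite{MMY}) then shows that the nonzero exponents are simple, with the number of distinct positive exponents equal to $\min(p,q)$, each occurring with multiplicity one together with its negative, while the remaining $|p-q|$ exponents are forced to vanish by the signature. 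Since here $p\le q$, the number of vanishing exponents equals $q-p=\lceil \ell(1-(r/2k))-1\rceil-\lceil \ell(r/2k)-1\rceil$, which is precisely the asserted count; this gives the second bullet.

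The main obstacle is purely one of bookkeeping rather than of substance: one must verify that the Zariski density statement of Theorem \ref{t.A}(b) is exactly the hypothesis required by the version of the criterion in \cite{MMY}, and in particular that Zariski density of the intersection with $SU(Q_{r/2k})$ translates into the pinching-and-twisting (strong irreducibility plus proximality) conditions for the $SU(p,q)$-valued cocycle. I would also need to confirm that the counting of zero exponents from the signature is legitimate, i.e. that the $|p-q|$-dimensional null directions of the Hermitian form genuinely force vanishing exponents and that no further degeneracies occur once twisting is known. Given Theorem \ref{t.A}, no new dynamical input is needed, so these verifications amount to matching our density conclusions to the precise statement of the cited criterion.
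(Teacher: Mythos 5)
Your proposal is correct and takes essentially the same route as the paper: deduce the corollary from Theorem \ref{t.A} by verifying the pinching-and-twisting hypotheses of the Avila--Viana simplicity criterion in the form of \cite[Theorem 2.17]{MMY} on each $H_r$, and read off the number of vanishing exponents from the signature of $Q_{r/2k}$. The one technical point the paper makes explicit and you only gesture at is the bridge between the \emph{group} $RV(k,\ell)$ and the supporting \emph{monoid} of the locally constant cocycle coding the Kontsevich--Zorich cocycle: the paper invokes \cite[\S 7.3]{AV} to note that this monoid contains all matrices $B_{\gamma}BB_{\gamma}$ with $B$ ranging over loop matrices generating $RV(k,\ell)$, which is what lets the group-level density conclusions of Theorem \ref{t.A} feed into the monoid-level criterion.
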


\begin{proof} As it is explained in \cite[\S 7.3]{AV}, the Kontsevich--Zorich cocycle over $\mathcal{F}_{k,\ell}$ is coded by a locally constant cocycle whose supporting monoid $\mathcal{B}$ contains all matrices of the form $B_{\gamma}BB_{\gamma}$, where $B_{\gamma}$ is a fixed Kontsevich--Zorich matrix and $B$ are the Kontsevich--Zorich matrices associated to all oriented loops based at a fixed vertex of the Rauzy diagram. Since the group generated by all such matrices $B$ is $RV(k,\ell)$, it follows from Theorem \ref{t.A} that the restriction of the supporting monoid $\mathcal{B}$ to each $H_r$ is pinching and twisting in the sense of \cite[\S 2.5]{MMY}. The desired result now follows from \cite[Theorem 2.17]{MMY}.
\end{proof}

\subsection{Organization of the paper} In Section \ref{s.homology}, we study a decomposition $H_1\oplus\dots\oplus H_k$ of the first absolute homology group of $\mkl$. In Section \ref{s.RV}, we describe the restrictions $RV(k,\ell)|_{H_r}$ of the Rauzy--Veech group $R(k,\ell)$ to the summands of the decomposition $H_1(\mkl, \mathbb{R}) = H_1\oplus\dots\oplus H_k$ in terms of complex matrices on the vector space $\mathbb{C}^{\ell}$ equipped with adequate hermitian forms. In Section \ref{s.RV}, we reduce the proof of Theorem \ref{t.A} to the investigation of certain groups of complex matrices. Finally, we analyse in Section \ref{s.matrices} the relevant groups of matrices in order to establish Theorem \ref{t.A}. 

\begin{remark} We hope that the arguments in this paper might be useful to study the question of non-continuity of the central Oseledets subspaces of the Kontsevich--Zorich cocycle. 
\end{remark}

\section{Homology groups}\label{s.homology}

\subsection{Subgroups of affine diffeomorphisms} 
 
We now define a finite subgroup $G$ of order $4k\ell$ of the group of affine diffeomorphisms of $\mkl$. 
  \par
The group $\zk$ acts by direct affine diffeomorphisms on $\mkl$: the element $r \in \zk$
 sends $Q_i$ onto $Q_{i+r}$ with derivative ${\rm id}$ if $r$ is even, ${\rm -id}$ if $r$
  is odd. It sends $A(i,j )$ to $A(i+r,j )$, $\aleph ((i,i+1),(j,j+1))$ to 
  $\aleph ((i+r,i+r+1),(j,j+1))$ and fixes each $M(j')$,
 
\par
\smallskip
 
The group $\zl$ acts on $\mkl$ by direct affine diffeomorphisms, the derivative of the action of $s\in \zl$ is the rotation $R^s$. This action preserves each $Q_i$, sending $A(i,j)$ to $A(i,j+s)$, $M((j,j+1))$ to $M((j+s,j+s+1))$, $\aleph ((i,i+1),(j,j+1))$ to $\aleph ((i,i+1),(j+s,j+s+1))$.
\par
\smallskip

This two actions commute  and they combine to define an action of the product group $\zk \times \zl$ on $\mkl$.

\par
\smallskip

We now define an affine involution $\sigma$ of $\mkl$ whose derivative is $S$. For $i \in \zk$, $\sigma$ sends $Q_i$ onto $Q_{1-i}$, $M((j,j+1))$ to $M((-j-1,-j))$, $A(i,j)$ to $A(1-i,-j)$, \linebreak $\aleph ((i,i+1),(j,j+1))$ to $\aleph ((-i,-i+1),(-j-1,-j))$.

\par
\smallskip

The involution $\sigma$ conjugates the  action of every element $(r,s) \in \zk \times \zl$ to the action of $(-r,-s)$. Combining the action of $\sigma$ and the action of $ \zk \times \zl$ defines  an action of a group $G$ of order $4k\ell$. It is the usual dihedral group of order $4k\ell$ when $\varpi=1$.
The group $G$ is the group of permutations of $\bzk \times \zl$ of the form $(i',j) \rightarrow (r,s)\pm (i',j)$, for $r\in \zk$, $s \in \zl$.


\subsection{Conjugacy classes in $G$}\label{ssconjclas}

 Let $1_{2k}$, $1_{\ell}$ be the standard generators of the corresponding cyclic groups.

 \begin{itemize}
\item Assume first that $\ell$ is odd. There are $k\ell +1$ conjugacy classes of $G$ contained in $\zk \times \zl$,
more precisely $2$ elements of order $2$ and $k\ell -1$ pairs of  distinct elements inverse to each other. The other $2$ conjugacy classes are those of $\sigma$ and $\sigma 1_{2k}$ and have size $k \ell$.
\item Assume now  that $\ell$ is even. There are $k\ell +2$ conjugacy classes of $G$ contained in $\zk \times \zl$,
more precisely $4$ elements of order $2$ and $k\ell -2$ pairs of  distinct elements inverse to each other. The other $4$ conjugacy classes are those of $\sigma$, $\sigma 1_{2k}$, $\sigma 1_{\ell}$, $\sigma 1_{2k} 1_{\ell}$ and have size $ \frac 12 k \ell$.
\end{itemize}

\subsection{Irreducible representations of $G$ over $\Rset$ or $\Cset$}\label{ssirrC}

The irreducible representations of $G$ over $\Cset$ are all defined over $\Rset$ and have dimension $1$ or $2$.

\par
\smallskip

When $\ell$ is odd, there are $4$ nonequivalent $1$-dimensional representations of $G$, sending $1_{\ell}$ to $1$ and $1_{2k}$ and $\sigma$ to $\pm 1$ (independently).

\par
\smallskip

When $\ell$ is even, there are $8$ nonequivalent $1$-dimensional representations of $G$, sending $1_{\ell}$ , $1_{2k}$ and $\sigma$ to $\pm 1$ (independently).

\par
\smallskip

To parametrize the $2$-dimensional representations, we define $\mathfrak R(2k,\ell)$ to be the quotient of $\zk \times \zl$ by $\{ \pm 1 \}$ and by $\rekl$ the subset of $\rkl$ associated to elements $(r,s)  \in \zk \times \zl$ of order $>2$ . 
The cardinality of $\mathfrak R(2k,\ell)$ (resp. $\rekl$) is equal to $k\ell +1$ (resp. $k\ell-1$) if $\ell$ is odd, to $k\ell +2$ (resp. $k\ell-2$) is $\ell$ is even.

\par
\smallskip

For $(r,s) \in \zk \times \zl$, the representation $\pi_{r,s}$ sends $\sigma$ to $S$, $1_{2k}$  to a rotation of angle $\pi \frac rk$ and  $1_{\ell}$ to a rotation of angle $2\pi \frac s{\ell}$. 
\par
\smallskip

The character $\chi_{r,s}$ of $\pi_{r,s}$ vanishes on $G - (\zk \times \zl)$ and its values on $\zk \times \zl$ are given by
$$ \chi_{r,s}(i,j)= 2 \cos 2\pi ( \frac {ri}{2k} + \frac {sj}{\ell}).$$

\par 
\smallskip

When $(r,s)$ has order $>2$, the representation $\pi_{r,s}$ is irreductible  and is conjugated by $S$ to $\pi_{-r,-s}$. 

\par
\smallskip

When $(r,s)$ has order $2$, the representation $\pi_{r,s}$ split into two representations $\pi_{r,s}^+$ (with $\chi_{r,s}^+(\sigma) =1$) and $\pi_{r,s}^-$ (with $\chi_{r,s}^-(\sigma) =-1$) of dimension $1$. The isomorphism classes of irreducible  representations are thus parametrized by $\rekl$. 


\subsection{Irreducible representations of $G$ over $\Qset$}\label{ssirrQ}

Let $\varPi:= 2k\ell/\varpi$ be the least common multiple of $2k$ and $\ell$, which is 
also the least common multiple of the elements of $G$. By a general result of Brauer (see \cite[Theorem 24, Section 12.3]{Serre}), which is easily checked  in the case of $G$, all irreducible representations of $G$ over $\Cset$ are actually defined over the cyclotomic field $\Qset(\varPi)$. To see how to group together the $\pi_{r,s}$ in order to obtain the irreducible representations of $G$ over $\Qset$, consider the action of the Galois group $(\Zset_\varPi)^*$ of $\Qset(\varPi)$ over $\Qset$ on $\rkl$ defined by 
$$ t.(r,s) = (tr,ts).$$
The group $(\Zset_\varPi)^*$ also acts on $G$ through $t.g:= g^t$. Observe that we have, for all $(r,s) \in \rkl$, $g \in G$, $t \in (\Zset_\varPi)^*$
$$ t. \chi_{r,s}(g) = \chi_{r,s} (t.g) =  \chi_{t.(r,s)} (g).$$

\par
\smallskip

The $1$-dimensional representations of $G$ are all defined over $\Qset$. Similarly, each of the points in $\rkl \setminus \rekl$ is fixed by $\ZP$.

\par 
\smallskip

By \cite[Theorem 29, section 13.1]{Serre}, we conclude that the irreducible representations of $G$ over $\Qset$ of dimension $>1$ are parametrized by the orbits of the action of $(\Zset_\varPi)^*$ on $\rekl$: for an orbit $\mathcal O$, the associated representation is 
$$ \pi_{\mathcal O} := \oplus_{(r,s) \in \mathcal O} \;\pi_{r,s}.$$

\begin{remark}
To compute the orbits of the action of $(\Zset_\varPi)^*$ on $\rkl$, it is sufficient to consider the action over $\zk \times \zl$, as $-1 \in \ZP$. Then one uses the Chinese remainder theorem to split $\varPi$ into prime powers. Write $\varPi = \prod_p  p^{C_p}$, $2k = \prod_p  p^{A_p}$, $\ell = \prod_p p^{B_p}$, with $C_p = \max (A_p,B_p)$ for each prime $p$. The action of $\ZP$ on $\zk \times \zl$ is the product 
of the actions of $\Zpc$ over $\Zpa \times \Zpb$ (with $A=A_p, B=B_p, C=C_p$).
Let $ (r,s) \in \Zpa \times \Zpb$. Let $p^a$ (resp. $p^b$) be the order of $r$ in $\Zpa$
(resp. of $s$ in $\Zpb$), with $0 \leq a \leq A$, $0 \leq b \leq B$. For any $(r',s')$ in the orbit of $(r,s)$, the order of $r'$ (resp. $s'$) is also $p^a$ (resp. $p^b$). When
$\min (a,b) =0$, there is exactly  one orbit with these orders. When $ \min (a,b) >0$, the stabilizer of $(r,s)$ as above has cardinality equal to $p^{C - \max (a,b)}$, hence the corresponding orbit has cardinality equal to $p^{\max (a,b) -1} (p-1)$. As the number of pairs $(r,s)$ with orders $(p^a, p^b)$ is equal to $p^{a+b-2} (p-1)^2$, the number of orbits associated to these values of $a,b$ is equal to $p^{\min (a,b)-1} (p-1)$.
\end{remark}


\subsection {Decomposition of the first relative homology group}\label{ssrelhom}

The classes of the oriented segments $\aleph (i',j')$ ( $i' \in \bzk$, $j' \in \bzl$) obviously span the first relative homology group $\hr$.
\par
\smallskip

Going around the boundary of $Q_i$ gives the relation 
$$ \sum_{\bzl} \aleph ((i-1,i),j') =  \sum_{\bzl} \aleph ((i+1,i)),j'), \quad \forall i \in \zk,$$
providing $2k-1$ independent relations between the $\aleph(i',j')$. 
As
$$2k\ell - (2k-1) = 2g + (\# \Sigma -1) = \dim \hr ,$$
these are the only relations between the $\aleph(i',j')$.

\par
\smallskip

Denote by $(e_i)$, $(E_{i',j'})$ the canonical bases of $\Qset ^{\zk}$, $\Qset ^{\bzk \times \bzl}$ respectively. We equip $\Qset$, $\Qset ^{\zk}$, $\Qset ^{\bzk \times \bzl}$
with  structures of $G$-module by defining

$$ 1_k . x = 1_\ell .x =x, \quad \sigma.x = -x,\quad \forall x \in \Qset, $$

$$ 1_k .e_i = e_{i+1} , \quad 1_{\ell}.e_i = e_i, \quad \sigma.e_i = -e_{1-i}, \quad \forall i \in \zk,$$

$$ 1_k . E_{i',j'} = E_{1+i',j'}, \; 1_{\ell}.E_{i',j'} = E_{i',1+j'}, \; \sigma.E_{i',j'} = E_{1-i',-j'}, \; \forall (i',j') \in \bzk \times \bzl.$$

We have then an exact sequence of $G$-modules.
\begin{equation}\label{e.exact-sequence}
0 \To \Qset  \To \Qset ^{\zk} \To \Qset ^{\bzk \times \bzl}  \To \hr \To  0.
\end{equation}

The maps in this exact sequence are as follows.
 The map from $\Qset$ to  $\Qset ^{\zk}$ sends $1$ to $\sum_{i} e_i$. 
The map from $\Qset ^{\zk}$ to $\Qset ^{\bzk \times \bzl}$ sends $e_i$ to $\sum_{j'} (E_{\{i,i+1\},j'} -E_{\{i-1,i\},j'})$. The map from 
$ \Qset ^{\bzk \times \bzl}$ to $\hr$ sends $E_{i',j'}$ to the class of $\aleph(i',j')$.
\medskip

\par
From \eqref{e.exact-sequence}, we deduce the character $\chi_{rel} $ of the $G$-module $\hr$. Firstly, for $(i,j) \in \zk \times \zl$, we have 
$$ \chi_{rel} (i,j) = 1 - 2k \delta_{0i} + 2k\ell \delta_{0i} \delta_{0j}.$$

\begin{itemize}
\item
When $\ell$ is odd,  $\chi_{rel}$ is equal to $1$ everywhere on  $G- (\zk \times \zl)$. One obtains

$$ \chi_{rel} = 1 + \sum_{(r,s) \in \rekl,\, s\ne 0} \chi_{r,s} .$$

\item
When $\ell$ is even, the values of $\chi_{rel}$ on the conjugacy classes of $\sigma$, $\sigma 1_{2k}$, 
$\sigma 1_{\ell}$, $\sigma 1_{2k} 1_{\ell}$ are respectively $-1$ ,$+1 $ ,$+3 $ ,$+1 $. One obtains
$$ \chi_{rel} = 1 + \chi_+ + \chi_- + \sum_{(r,s) \in \rekl,\, s\ne 0} \chi_{r,s},$$
where $\chi_+$ (resp. $ \chi_-$) is the  $1$-dimensional character with value $-1$ at $\sigma$ and $1_\ell$ and value $+1$ (resp. $-1$)  at $1_{2k}$.
\end{itemize}


\subsection {Decomposition of the first absolute homology group}

The exact sequence of $G$-modules for relative homology reads
$$0 \rightarrow \ha\rightarrow \hr \rightarrow H_0(\Sigma, \Qset) \rightarrow \Qset \rightarrow 0.$$

This gives 
$$\chi_{ab} = \chi_{rel} - \chi_{\Sigma} +1,$$
where $\chi_{ab}$, $\chi_{\Sigma}$ are the characters of $\ha$, $H_0(\Sigma, \Qset)$ respectively. 
\par
\smallskip

Each of the subsets $\Sigma_M := \{M_{j'}, \; j' \in \bzl \}$ and $\Sigma_A := \{A(\varDelta) , \; \varDelta \in \Zset_\varpi \}$ is invariant under $G$, hence the character $\chi_{\Sigma}$ splits as $\chi_{\Sigma_M} + \chi_{\Sigma_A}$.

\medskip

 The character $\chi_{\Sigma_M}$ satisfies $\chi_{\Sigma_M} (i,j) = \ell \delta_{0j}$.
 
 \par
 \smallskip
 
When $\ell$ is odd, it is equal to  $1$ on all of $G- (\zk \times \zl)$. This gives in this case
$$ \chi_{\Sigma_M} = 1 + \sum_{(0,s) \in \rekl} \chi_{0,s}.$$

\par 
\smallskip

When $\ell$ is even, $\chi_{\Sigma_M}$ takes  the value $0$ on the conjugacy classes of $\sigma$ and $\sigma 1_{2k}$, and the value $2$ 
on the conjugacy classes of $\sigma 1_{\ell}$ and $\sigma 1_{2k} 1_{\ell}$. This gives in this case
$$ \chi_{\Sigma_M} = 1 + \chi_+ + \sum_{(0,s) \in \rekl} \chi_{0,s}.$$
\par
\bigskip

The character $\chi_{\Sigma_A} $ satisfies  $\chi_{\Sigma_A} (i,j)= \varpi$  if $i,j$ are congruent modulo $\varpi$, to $\chi_{\Sigma_A} (i,j)=0$ otherwise. 

\par
\smallskip

When $\ell$ is odd, it is equal to $1$ on all of $G- (\zk \times \zl)$. This gives

$$\chi_{\Sigma_A} = 1 + \sum_{(r,s) \in \rekl, \frac r{2k} + \frac s{\ell} \in \Zset} \chi_{r,s}.$$

\par
\smallskip 

When $\ell$ is even, $\chi_{\sigma_A}$ takes the  value $0$ on the conjugacy classes of $\sigma$ and $\sigma 1_{2k} 1_{\ell}$, and the value $2$ 
on the conjugacy classes of $\sigma 1_{\ell}$ and $\sigma 1_{2k} $. This gives 

$$\chi_{\Sigma_A} = 1 + \chi_- + \sum_{(r,s) \in \rekl, \frac r{2k} + \frac s{\ell} \in \Zset} \chi_{r,s}.$$

In the formula $\chi_{ab} = \chi_{rel} -  \chi_{\Sigma_M}- \chi_{\Sigma_A} +1$ we have now computed all the terms in the right-hand side. We obtain

$$\chi_{ab}= \sum_{(r,s) \in \rkl, r \ne 0, s\ne 0,\frac r{2k} + \frac s{\ell} \notin \Zset } \chi_{r,s}.$$

\begin{remark}
With the conditions $r \ne 0, s\ne 0,\frac r{2k} + \frac s{\ell} \notin \Zset $, there is no difference between $\rkl$ and $\rekl$.

\end{remark}

From this character formula, we get
\begin{eqnarray*}
 H_1(\mkl,\Rset) &=& \oplus _{(r,s) \in \rkl, r \ne 0, s\ne 0,\frac r{2k} + \frac s{\ell} \notin \Zset }\; \pi_{r,s}\\
         &=& \oplus_{0<r \leq k} H_r,
 \end{eqnarray*}
with 
\begin{equation}\label{e.Hr}  H_r:= \oplus_{0<s<\ell,\frac r{2k} + \frac s{\ell} \ne 1 }\; \pi_{r,s}
\end{equation}
for $0 <r <k$ and
\begin{equation}\label{e.Hk}H_k:=  \oplus_{0<s<\ell/2} \; \pi_{k,s}.
\end{equation}
 
 \medskip
 
 \begin{proposition}
 The subrepresentation $H_r$ is defined over $\Qset$ if and only if $r/2k$ is equal to $1/6,\,1/4, \, 1/3$ or $1/2$.
 \end{proposition}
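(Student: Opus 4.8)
The plan is to translate ``defined over $\Qset$'' into a purely combinatorial statement about Galois orbits, and then resolve that statement by elementary number theory. By the description of the rational irreducible representations in Subsection \ref{ssirrQ}, a $G$-submodule of $H_1(\mkl,\Rset)$ is defined over $\Qset$ precisely when its set of complex irreducible constituents $\pi_{r,s}$ is a union of orbits of the Galois action $t.(r,s)=(tr,ts)$ of $\ZP$ on $\rkl$ (equivalently, when its character is $\Qset$-valued; there is no Schur index obstruction, since the $2$-dimensional $\pi_{r,s}$ are already defined over $\Qset(\varPi)$ and grouping them by $\ZP$-orbits yields the $\Qset$-irreducibles). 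Thus I would fix $r$ with $0<r\leq k$ and ask when the constituent set of $H_r$ --- the classes $(r,s)$ with $0<s<\ell$ and $\frac r{2k}+\frac s\ell\neq 1$, resp.\ $0<s<\ell/2$ when $r=k$ --- is $\ZP$-stable.

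The first step isolates the role of the first coordinate. Since $H_r$ collects exactly the constituents whose first coordinate equals $r$ modulo $\{\pm1\}$, and since for $0<r<k$ the representative with first coordinate in $(0,k)$ is the unique one in its $\pm$-class (the partner $(2k-r,\ell-s)$ has first coordinate in $(k,2k)$), $\ZP$-stability forces $tr\equiv\pm r\pmod{2k}$ for every $t\in\ZP$. Writing $d=\gcd(r,2k)$ and $m=2k/d$, so that $m$ is precisely the denominator of $r/2k$ in lowest terms, a short computation gives $tr\equiv\pm r\pmod{2k}$ if and only if $t\equiv\pm1\pmod m$. Because $m\mid 2k\mid\varPi$, the reduction map $\ZP\to(\Zset_m)^*$ is surjective, so the condition ``$tr\equiv\pm r\pmod{2k}$ for all $t$'' is equivalent to $(\Zset_m)^*=\{\pm1\}$.

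Next I would verify that this first-coordinate condition is also sufficient, i.e.\ that the constraints on $s$ create no further obstruction. Here $\gcd(t,\ell)=1$, so $t$ permutes the nonzero residues mod $\ell$; moreover the excised relation $\frac r{2k}+\frac s\ell\in\Zset$ is Galois-invariant, since $t\bigl(\frac r{2k}+\frac s\ell\bigr)=\frac{tr}{2k}+\frac{ts}\ell$ is again an integer. Consequently, once $tr\equiv\pm r\pmod{2k}$, the map $(r,s)\mapsto(tr,ts)$ sends the constituent set of $H_r$ into itself: it fixes, up to $\pm$, the unique excluded value $s_0$ determined by $\frac r{2k}+\frac{s_0}\ell=1$, and permutes the remaining admissible second coordinates. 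The same invariance handles $H_k$, where the cut $0<s<\ell/2$ merely selects one representative from each $\pm$-pair. This sufficiency step is the technical heart of the argument, and getting it right is the main thing: one must confirm that Galois stability of the first coordinate is not spoiled by the excision of the singular pairs, which is exactly why the Galois-invariance of the integrality condition $\frac r{2k}+\frac s\ell\in\Zset$ is needed.

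Finally I would solve $(\Zset_m)^*=\{\pm1\}$, which holds exactly for $m\in\{1,2,3,4,6\}$ (for every other $m$ one has $\varphi(m)>2$, except $m=5$, where $2\not\equiv\pm1$). Translating back through $0<r/2k\leq1/2$ with $m$ the denominator of $r/2k$: the value $m=1$ is impossible in $(0,1/2]$, while $m=2,3,4,6$ force $r/2k=1/2,1/3,1/4,1/6$ respectively (the complementary fractions $2/3,3/4,5/6$ exceeding $1/2$). This yields precisely the claimed list $r/2k\in\{1/6,1/4,1/3,1/2\}$, with $r/2k=1/2$ corresponding to $H_k$.
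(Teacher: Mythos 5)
Your argument is correct and is essentially the paper's own proof: both translate rationality of $H_r$ into invariance of its constituent set under the Galois action of $(\Zset_\varPi)^*$, reduce to the condition on the first coordinate by using the Galois-invariance of $\zk\times\{0\}$ and of the singular set $\{(r,s) : \frac{r}{2k}+\frac{s}{\ell}\in\Zset\}$, and conclude from the fact that $(\Zset_n)^*=\{\pm 1\}$ exactly when $n\in\{1,2,3,4,6\}$, i.e. $r/2k\in\{1/6,1/4,1/3,1/2\}$ (your version merely makes explicit the step the paper leaves implicit, namely that invariance of $\{\pm r\}$ under $(\Zset_{2k})^*$ amounts to $(\Zset_m)^*=\{\pm 1\}$ for $m$ the denominator of $r/2k$). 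The only blemish is your parenthetical about $m=5$, which is redundant since $\varphi(5)=4>2$ already excludes it; this does not affect the conclusion.
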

 
 \begin{proof}
  Indeed, for $0 <r \leq k$, let $\mathfrak R_r$ be the image in $\rekl$ of the set $\{ (r,s) \vert \, 0<s<\ell,\frac r{2k} + \frac s{\ell} \ne 1 \}$.  By subsection \ref{ssirrQ}, the subrepresentation $H_r$ is defined over $\Qset$ iff $\mathfrak R_r$ is invariant under the action of $(\Zset_\varPi)^*$. As the subsets $ \zk \times \{0\}$ and $\{(r,s) \vert \frac r{2k} + \frac s{\ell} \in \Zset \}$ of $\zk \times \zl$ are invariant under the action of $(\Zset_\varPi)^*$, $\mathfrak R_r$ is invariant under the action of $(\Zset_\varPi)^*$ iff the subset $\{ \pm r \}$ of $\zk$ is invariant under the action of $(\Zset_\varPi)^*$, which has the same orbits in $\zk$ that the action of $(\Zset_{2k})^*$. But the only integers $n >1$ such that 
  $(\Zset_{n})^* = \{ \pm 1 \}$ are $2,3,4,6$. This proves the proposition.
 \end{proof}
 
\begin{remark}\label{r.Hk} The subspace $H_k$ is identified with the $H_1(\mathcal{M}_{1,\ell}, \mathbb{R})$ under the natural projection $\mkl\to\mathcal{M}_{1,\ell}$. 

In general, the subspaces $H_r$ are given by the eigenspaces of the generator of the group of deck transformations of $\mkl\to\mathcal{M}_{1,\ell}$. In particular, the decompostion $ H_1(\mkl,\Rset) = \oplus_{0<r \leq k} H_r$ can be extended to all translation surfaces of the Veech--McMullen family $\mathcal{F}_{k,\ell}$. 
\end{remark}
 
 
 \subsection{Relation with the symplectic intersection form}

Let $\chi_{\sigma}$ be the $1$-dimensional character on $G$ with kernel $\zk \times \zl$. The intersection form $\omega$ on $\ha$
satisfies, for any $g \in G$, $v,w \in \ha$
$$\omega(g.v,g.w)= \chi_{\sigma}(g) \omega(v,w).$$

\medskip

\begin{proposition}
The $2$-dimensional summands in the decomposition of the $G$-module $\ha$ in the last subsection are mutually $\omega$-orthogonal.
\end{proposition}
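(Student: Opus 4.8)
The plan is to exploit the twisted invariance $\omega(g.v,g.w)=\chi_{\sigma}(g)\,\omega(v,w)$ to convert the orthogonality statement into an application of Schur's lemma. First I would note that each $2$-dimensional $\pi_{r,s}$ occurs with multiplicity one in $\ha$, since it appears with coefficient $1$ in the character formula for $\chi_{ab}$; hence the $2$-dimensional summands are exactly the isotypic components indexed by the relevant $(r,s)\in\rekl$, and distinct summands are pairwise non-isomorphic $G$-modules. Thus it suffices to show that if $V=\pi_{r,s}$ and $W=\pi_{r',s'}$ are two summands with $(r,s)\neq(r',s')$ in $\rekl$, then $\omega(v,w)=0$ for all $v\in V$, $w\in W$.

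Next I would package the restricted pairing $\omega|_{V\times W}$ as a linear map $\psi\colon V\to W^{*}\otimes\chi_{\sigma}$, $\psi(v)=\omega(v,\cdot)$, where $W^{*}\otimes\chi_{\sigma}$ denotes the dual module $W^{*}$ with its $G$-action twisted by $\chi_{\sigma}$, i.e. $g\star f=\chi_{\sigma}(g)\,(g.f)$. Writing $w=g.(g^{-1}.w)$ and applying the twisted invariance gives the one-line identity $\psi(g.v)=g\star\psi(v)$, so $\psi$ is $G$-equivariant, and orthogonality of $V$ and $W$ becomes equivalent to the vanishing of $\psi$.

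The key observation is then that $W^{*}\otimes\chi_{\sigma}$ is again isomorphic to $W=\pi_{r',s'}$. Its character is $g\mapsto\chi_{\sigma}(g)\,\overline{\chi_{r',s'}(g)}$; but $\chi_{r',s'}$ is real-valued and vanishes on $G-(\zk\times\zl)$, while $\chi_{\sigma}\equiv 1$ on $\zk\times\zl$, so this character equals $\chi_{r',s'}$ identically. Hence $W^{*}\otimes\chi_{\sigma}\cong\pi_{r',s'}$, which is not isomorphic to $V=\pi_{r,s}$ because distinct elements of $\rekl$ label inequivalent (absolutely) irreducible representations. Since the $\pi_{r,s}$ are absolutely irreducible, Schur's lemma forces $\psi=0$, i.e. $\omega(V,W)=0$.

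The argument is essentially formal, and the only point demanding care is the bookkeeping of the $\chi_{\sigma}$-twist: one must check that the twist lands on the dual factor and that, combined with the reality and the support of $\chi_{r',s'}$, the twisted dual $W^{*}\otimes\chi_{\sigma}$ returns to the original irreducible rather than to some other summand. Once this identification is in hand the conclusion is immediate. Equivalently, one may run the computation globally: $\omega$ induces a $G$-isomorphism $\ha\to\ha^{*}\otimes\chi_{\sigma}$ matching each isotypic piece with its own dual, and multiplicity one then yields the orthogonality of distinct summands.
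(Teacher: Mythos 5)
Your proof is correct and takes essentially the same route as the paper's: both convert orthogonality into a $G$-equivariant map from one summand into the $\chi_{\sigma}$-twisted dual of the other, identify that twisted dual with the original irreducible, and conclude by Schur's lemma that the map vanishes (the paper phrases Schur as ``the kernel is $G$-invariant, hence $u$ is invertible or zero''). Your write-up merely makes explicit two points the paper leaves implicit, namely the character computation showing $W^{*}\otimes\chi_{\sigma}\cong W$ and the multiplicity-one fact guaranteeing distinct summands are non-isomorphic.
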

\begin{proof}
Let $E,F$ be two such distinct summands.The intersection form defines an homomorphism $u$ from $E$ to the dual $F^*$ of $F$. We have to prove that $u=0$. If $v \in E$ belongs to the kernel of $u$, then the same is true for $g.v$, for any $g \in G$. As $E$ is irreducible, this proves that $u$ is either invertible or equal to $0$. If $u$ is invertible,
the formula above shows that it is an isomorphism of $G$-modules from $E$ to the tensor product of the contragredient representation of $F$ by $\chi_{\sigma}$. But this tensor product is isomorphic as $G$-module to $F$ itself. As $E,F$ are distinct, we conclude that $u$ cannot be invertible, hence $u=0$.
\end{proof}

For $0<r \leq k$, let $H_r$ be the sum of the summands in the decomposition of $\ha$ with characters $\chi_{r,s}$, $s$ varying according to the prescription above. Assume now that $0<r<k$. We equip $H_r$ with the complex structure\footnote{It comes from the fact that $H_r$ is  associated to eigenspaces of the eigenvalues $\exp(\pm\frac{2\pi i r}{2k})$ for the action on complex cohomology of the generator $1_{2k}$ of the deck group of $\mathcal{M}_{k,\ell}\to\mathcal{M}_{1,\ell}$ (cf. Remark \ref{r.Hk}).} such that
$$ 1_{2k} .v = \exp(i\pi\frac rk) v.$$ 
The formula
$$\langle v,v \rangle:= (\sin(\pi \frac rk))^{-1} \omega(1_{2k} .v,v)$$
defines an hermitian\footnote{In the literature, this hermitian form is called \emph{Hodge form}.} form on $H_r$ whose imaginary part is $\omega$. 

The signature of this hermitian form is calculated in Subsection \ref{ss.signature}. 


\section{Hyperelliptic Rauzy diagrams and Rauzy--Veech groups $RV(k,\ell)$}\label{s.RV}

\subsection{Review of the description of hyperelliptic Rauzy diagrams} 

In this subsection, we recall the content of Subsection 2.1 of our previous paper \cite{AMY-hyp}. 

Given an integer $d \geq 2$, denote by $\Ad$ the arithmetic progression $d-1, \, d-3, \ldots , 1-d$. Note that $\Ad$ has a natural involution $\iota(k)=-k$. 

The hyperelliptic Rauzy class $\Rd$ over $\Ad$ and the associated Rauzy diagram $\Dd$ are inductively defined as follows. 

The Rauzy class $\Rd$ contains the central vertex $\pi^* = \pi^*(d)$: 
$$\pi^*_t(k) = \frac 12 (d+1+k),\quad \pi^*_b(k) = \frac 12 (d+1-k).$$

For $d=2$, this is the sole vertex. For $d \geq 2$, $\R_{d+1}$ is the disjoint union of $\pi^*(d+1)$, $j_t(\Rd)$ and $j_b(\Rd)$, where $j_t$, $j_b$ are the following injective maps: for  $\pi \in \Rd$, if we denote $j_t(\pi) = t\pi$, $j_b(\pi) = b\pi$, then 

$$t\pi_t(-d) =1,\quad \quad \quad t\pi_b(-d) = \pi_b(d-3) ,$$
$$t\pi_t(k) = 1+\pi_t(k-1),$$
$$  t\pi_b(k) = \left \{ 
\begin{array}{cc}
 \pi_b(k-1)& \text{if } \pi_b(k-1) < \pi_b(d-3),\\
\pi_b(k-1)+1 & \text{if } \pi_b(k-1) \geq \pi_b(d-3),
\end{array} \right.$$
for $2-d \leq k \leq d$, and

$$b\pi_b(d) =1,\quad \quad \quad b\pi_t(d) = \pi_t(3-d) ,$$
$$b\pi_b(k) = 1+\pi_b(k+1),$$
$$  b\pi_t(k) = \left \{ 
\begin{array}{cc}
 \pi_t(k+1)& \text{if } \pi_t(k+1) < \pi_t(3-d),\\
\pi_t(k+1)+1 & \text{if } \pi_b(k+1) \geq \pi_t(3-d),
\end{array} \right.$$
for $-d \leq k \leq d-2$.

The arrows of $\Dd$ are given by the following one-to-one maps $R_t, R_b$ from $\Rd$ to itself: 

$$\left \{  \begin{array}{cc} R_t(\pi^*(d+1)) = j_t (\pi^*(d)),\\
 R_b(\pi^*(d+1)) = j_b (\pi^*(d)), \end{array} \right. $$
 
 \medskip
 
$$ \left \{  \begin{array}{cc} R_t \circ j_b \circ R_t^{-1} =j_b, \\
R_b \circ j_t \circ R_b^{-1} =j_t, \end{array} \right. $$

\medskip

$$ \left \{  \begin{array}{cc} R_t \circ j_t \circ R_t^{-1}(\pi) =j_t(\pi), \quad \quad \pi \ne \pi^*(d),\\
R_b \circ j_b \circ R_b^{-1}(\pi) =j_b(\pi), \quad \quad \pi \ne \pi^*(d),\end{array} \right.$$

\medskip

$$R_t \circ j_t \circ R_t^{-1}(\pi^*(d)) = \pi^*(d+1)= R_b \circ j_b \circ R_b^{-1}(\pi^*(d)).$$

The elements of $\Rd$ correspond bijectively to the words in $\{t,b\}$ of length $<d-1$ via the following map $W_d$: let $W_d(\pi^*(d))$ be the empty word, $W_d (j_t(\pi))$ is the word $tW_{d-1}(\pi)$ and $W_d (j_b(\pi))$ is the word $bW_{d-1}(\pi)$. 

One recovers from $W_d(\pi)$ the winners of the arrows starting from $\pi$ as follows: the winner of the arrow of top type starting from $\pi$ is the letter $d-1-2w_b(\pi)$ of $\Ad$, where $w_b(\pi)$ is the number of occurrences of $b$ in $W_d(\pi)$; similarly, the winner of the arrow of bottom type starting from $\pi$ is the letter $1-d+2w_t(\pi)$ of $\Ad$. Observe that we have always
$$ d-1-2w_b(\pi)> 1-d+2w_t(\pi).$$

A vertex $\pi \in \Rd$ is connected to the central vertex $\pi^*(d)$ by a \emph{unique} oriented \emph{simple} path $\gamma^*(\pi)$ in $\Dd$ from $\pi^*(d)$ to $\pi$. 

As it turns out, all non-trivial \emph{simple} loops in $\Rd$ are \emph{elementary}: they consist of arrows of the same type. Any such loop $\gamma$ contains a unique vertex $\pi$ such that $\gamma$ passes through $\pi$ but $\gamma^*(\pi)$ does not contain any arrow of $\gamma$, and, furthermore, $\pi$ is the vertex of $\gamma$ such that $ |W_d(\pi)|$ is minimal, and its value is $d-1-|\gamma|$. In the sequel, we denote by $\gamma'$ the non-oriented loop at $\pi^*(d)$ defined by 
$$\gamma' := \gamma^*(\pi) * \gamma * (\gamma^*(\pi))^{-1}.$$

\subsection{Mapping classes attached to the arrows of $\mathcal{D}_d$} 

In this subsection, we essentially review the content of Section 4 of our previous paper \cite{AMY-hyp}. 

Given $\pi \in \Rd$, denote by $M_{\pi}$ the \emph{canonical} translation surface with combinatorial data $\pi$ whose length data $\lambda^{can}$ and suspension data $\tau^{can}$ are: 
 $$ \lambda^{can}_\alpha = 1, \quad \tau^{can}_\alpha = \pi_b(\alpha) - \pi_t(\alpha), \quad \forall \alpha \in \A.$$

We obtain $M_{\pi}$ by identifying parallel sides of an appropriate polygon $P_{\pi}$. The set of marked points of $M_{\pi}$ is denoted by $\Sigma_{\pi}$, and the middle points of the sides of $P_{\pi}$ together with a base point $O_{\pi}$ in the interior of $P_{\pi}$ form a subset $\Sigma^*_{\pi}$ of $M_{\pi}$. 

The Rauzy--Veech operation associated to each arrow $\gamma: \pi \to \pi'$ of $\Dd$ is encoded by the isotopy class of a homeomorphism $H_\gamma : (M_\pi, \Sigma_\pi \cup \Sigma^*_\pi) \to (M_{\pi'}, \Sigma_{\pi'} \cup \Sigma^*_{\pi'})$ constructed in Subsection 4.1 of \cite{AMY-hyp}. 

The map $\gamma\mapsto[H_{\gamma}]$ induces (by functoriality\footnote{I.e., the image of a loop under the morphism is the composition of the $[H_{\gamma}]$ attached to its arrows.}) a morphism $\pi_1(\widetilde{\Dd}, \pi^*)\to \textrm{Mod}(\pi^*)$ from the fundamental group $\pi_1(\widetilde{\Dd}, \pi^*)$ of the non-oriented Rauzy diagram $\widetilde{\Dd}$ associated to $\Dd$ based at the central vertex $\pi^*$ to the mapping class group $\textrm{Mod}(\pi^*)$ of $(M_{\pi^*}, \Sigma_{\pi^*})$.  

In this context, given $\gamma$ a simple loop in $\Dd$, the action of $\gamma'$ as a isotopy class on $M_{\pi^*}$ was computed in Proposition 4.5 of \cite{AMY-hyp}: it is a Dehn twist about the straight line joining the midpoints of the sides of $P_{\pi^*}$ indexed by the letter of $\mathcal{A}_d$ winning in the loop $\gamma$. 

Note that the elements of $\textrm{Mod}(\pi^*)$ can be viewed also as mapping classes on the translation surface $\mathcal{M}_{1,\ell}$ where $\ell=d+1$, and, \emph{a fortiori}, they can be lifted to $\mkl$ via the natural projection $\mkl\to\mathcal{M}_{1,\ell}$. 

\begin{definition}\label{d.RV} The Rauzy--Veech group $RV(k,\ell)$ is the group generated by the actions on $H_1(\mkl, \mathbb{R})$ of all $\gamma'$ associated to all elementary loops $\gamma$ in $\Rd$ (where $\ell=d+1$). 
\end{definition}

The Rauzy--Veech group $RV(1,\ell)$ was computed in our previous paper \cite{AMY-hyp}: it is isomorphic to an explicit finite-index subgroup of $Sp(2g,\mathbb{Z})$ (where $g$ is the genus of $\mathcal{M}_{1,\ell}$). 

\begin{remark}\label{r.RV-Hk} In general, the natural projection $\mkl\to \mathcal{M}_{1,\ell}$ takes $H_k$ to $H_1(\mathcal{M}_{1,\ell},\mathbb{R})$ in such a way that $RV(k,\ell)|_{H_k}$ is isomorphic to $RV(1,\ell)$, so that $RV(k,\ell)|_{H_k}$ is the explicit finite-index subgroup described in Theorem 2.9 of our previous paper \cite{AMY-hyp}. 
\end{remark}

\subsection{Lifting the action of the loops in $\Dd$: top case}\label{ss.Lt}

Let $\gamma$, $\pi$, $\gamma'$ be as in the previous subsection. Let $k$ be an integer $\geq 2$. Let $\mathcal M_{k,d+1}$
be the surface considered in the first section. We have a canonical projection
$\mathcal M_{k,d+1} \rightarrow \mathcal M_{1,d+1}.$
The action of $\gamma'$, as an isotopy class of the translation surface $\mathcal M_{1,d+1}$ with marked points at the $M(j)$ and the $A(\delta)$, was already discussed in the previous subsection. We describe now the lift of this action to $\mathcal M_{k,d+1}$. 

Assume first that $\gamma$ is of top type. Let $w:=w_b(\pi) \in \{0,\ldots,d-2 \}$. The winner\footnote{In the sequel, it is useful to recall that an elementary loop of given (top or bottom) type is uniquely determined the parameter $w$, or equivalently, the parameter $p$.} of $\pi$ is then $p:=d-1-2w$. We write $L_p^t$ for the action of $\gamma'$ on the homology of $\mathcal M_{k,d+1}$. 

For $i \in \zk$, $j \in \Zset_{d+1}$, the image of the relative homology class $\aleph ((i,i+1),(j,j+1))$ by $L_p^t$ is equal to

\begin{itemize}
\item  $\aleph ((i,i+1),(j,j+1))$, if $j$ is neither $0$ nor $w+1$;

\medskip

\item $\aleph((i-1,i),(0,1)) + \sum_1^w (\aleph((i-1,i),(m,m+1)) - \aleph((i,i+1),(m,m+1)))$, if $j=w+1$;

\medskip

\item \begin{eqnarray*}\aleph((i-1,i),(w+1,w+2)) + \sum_{w+2}^d (\aleph((i-1,i),(m,m+1)) \\ -  \aleph((i,i+1),(m,m+1)))\end{eqnarray*} if $j=0$.
\end{itemize}

We define
$$V_i(p):= \sum_0^w \aleph((i-1,i),(m,m+1)) - \sum_1^{w+1}  \aleph((i,i+1),(m,m+1)),$$
so that we have
$$L_p^t(\aleph((i,i+1),(j,j+1))) = \aleph((i,i+1),(j,j+1)) + (\delta_{j,w+1} - \delta_{j,0})V_i(p).$$

This gives
$$L_p^t(V_i(p)) = - V_{i-1}(p).$$

\medskip

Let $0<r<k, 0<s<\ell := d+1$ with $\frac r{2k} + \frac s{\ell} \ne 1$; write
$$x_{i,j} = \cos 2\pi(\frac {ri}{2k} + \frac {sj}{\ell} ), \quad y_{i,j} = \sin 2\pi(\frac {ri}{2k} + \frac {sj}{\ell} ).$$

Define
$$X^t_{r,s} := \sum_{i \in \zk,\, j \in \zl} x_{i,j} \aleph((i,i+1),(j,j+1)), \quad Y^t_{r,s} := \sum_{i \in \zk,\, j \in \zl} y_{i,j} \aleph((i,i+1),(j,j+1)).$$

We have 
$$L_p^t(X^t_{r,s}) = X^t_{r,s} + \sum_{i \in \zk} (x_{i,w+1} - x_{i,0}) V_i(p),$$
$$L_p^t(Y^t_{r,s}) = Y^t_{r,s} + \sum_{i \in \zk} (y_{i,w+1} - y_{i,0}) V_i(p).$$

Set 
$$V_{cos}(p,r):= \sum_{i \in \zk} x_{i,0} V_i(p), \quad V_{sin}(p,r):= \sum_{i \in \zk} y_{i,0} V_i(p).$$

Then we have 
\begin{eqnarray*}
L_p^t(V_{cos}(p,r)) &= & \sum_{i \in \zk} x_{i,0} L_p^t(V_i(p)) \\
                &=& - \sum_{i \in \zk} x_{i,0} V_{i-1}(p) \\
                &=& - \sum_{i \in \zk} x_{i+1,0} V_{i}(p) \\
                &=& \cos (\pi (1 + \frac rk )) V_{cos}(p,r) - \sin (\pi (1 + \frac rk )) V_{sin}(p,r)
\end{eqnarray*}
and
$$L_p^t(V_{sin}(p,r)) = \sin (\pi (1 + \frac rk )) V_{cos}(p,r) + \cos (\pi (1 + \frac rk )) V_{sin}(p,r).$$

We also compute the image of $V_{cos}(p',r)$ and $V_{sin}(p',r)$ for $p' \ne p$. We write  $p'= d-1-2w'$. Assume first that $p'< p$ (i.e., $w'>w$). One has

\begin{eqnarray*} 
L_p^t(V_{cos}(p',r)) &= & \sum_{i \in \zk} x_{i,0}\left(\sum_0^{w'} L_p^t(\aleph((i-1,i),(m,m+1))) - \sum_1^{w'+1} L_p^t(\aleph((i,i+1),(m,m+1)))\right)\\
   &= & V_{cos}(p',r) + \sum_{i \in \zk}x_{i,0}(\sum_0^{w'} (\delta_{m,w+1} - \delta_{m,0})V_{i-1}(p) - \sum_1^{w'+1} (\delta_{m,w+1} - \delta_{m,0})V_{i}(p))\\
   &=& V_{cos}(p',r) - \sum_{i \in \zk}x_{i,0} V_{i}(p)) \\
  &=& V_{cos}(p',r) - V_{cos}(p,r).
\end{eqnarray*}

Similarly
$$ L_p^t(V_{sin}(p',r))= V_{sin}(p',r) - V_{sin}(p,r).$$

\bigskip 

When $p'> p$ (i.e., $w'<w$), one obtains
\begin{eqnarray*} 
L_p^t(V_{cos}(p',r)) &= & V_{cos}(p',r) + \sum_{i \in \zk}x_{i,0}(\sum_0^{w'} (\delta_{m,w+1} - \delta_{m,0})V_{i-1}(p) - \sum_1^{w'+1} (\delta_{m,w+1} - \delta_{m,0})V_{i}(p))\\
   &=& V_{cos}(p',r) - \sum_{i \in \zk}x_{i,0} V_{i-1}(p)\\
    &=& V_{cos}(p',r) + L_p^t( V_{cos}(p,r)),
\end{eqnarray*}
and similarly
$$ L_p^t(V_{sin}(p',r))= V_{sin}(p',r) + L_p^t( V_{sin}(p,r)).$$

In summary, for each $0<r<k$, we deduce that 
\begin{itemize}
\item The subspace $H_r$ is fixed by the lift of the action of $\gamma'$ (which commutes with the action of $\mathbb{Z}_{2k}$); therefore, the Rauzy--Veech group $RV(k,\ell)$ gives rise to well-defined  groups $RV(k,\ell)|_{H_r}$ (obtained by restriction to $H_r$); 
\item $H_r$ is the direct sum of the $2$-dimensional subspace generated by $V_{\cos}(p,r)$ and $V_{\sin}(p,r)$ on which $\gamma'$ acts by a rotation of $-\pi(1+\frac{r}{k})$, and a subspace of codimension $2$ on which $\gamma'$ acts by the identity. 
\end{itemize}

\subsection{Lifting the action of the loops in $\Dd$: bottom case}

In the same setting that in the last subsection, we now assume that $\gamma$ is of bottom type.

 Let $w:=w_t(\pi) \in \{0,\ldots,d-2 \}$. The winner of $\pi$ is then $p:=-d+1+2w$. We write $L_p^b$ for the action of $\gamma'$ on the homology of $\mathcal M_{k,d+1}$. 

For $i \in \zk$, $j \in \Zset_{d+1}$, the image of $\aleph((i-1,i),(-j,-j+1))$ by $L_p^b$ is equal to

\begin{itemize}
\item $\aleph((i-1,i),(-j,-j+1))$, if $j$ is neither $0$ nor $w+1$;
\item \begin{eqnarray*}
\aleph((i,i+1),(0,1))+ \sum_1^w (\aleph((i,i+1),(-m,-m+1)) \\ - \aleph((i-1,i),(-m,-m+1))),
\end{eqnarray*} if $j=w+1$;
\item \begin{eqnarray*}
\aleph((i,i+1),(-w-1,-w)) + \sum_{w+2}^d (\aleph((i,i+1),(-m,-m+1)) \\ - \aleph((i-1,i),(-m,-m+1))),
\end{eqnarray*} if $j=0$.
\end{itemize}

We now have 
$$ \sum_0^w \aleph((i,i+1),(-m,-m+1)) - \sum_1^{w+1} \aleph((i-1,i),(-m,-m+1)) = V_i(p)$$

hence 
$$L_p^b(\aleph((i-1,i),(-j,-j+1))) = \aleph((i-1,i),(-j,-j+1)) + (\delta_{j,w+1} - \delta_{j,0})V_i(p).$$

This gives
$$L_p^b(V_i(p)) = - V_{i+1}(p).$$

\bigskip

Let $0<r<k, 0<s<\ell$ with $\frac r{2k} + \frac s{\ell} \ne 1$;
%
we have 
\begin{eqnarray*}
L_p^b(V_{cos}(p,r)) &= & \sum_{i \in \zk} x_{i,0} L_p^b(V_i(p)) \\
                &=& - \sum_{i \in \zk} x_{i,0} V_{i+1}(p) \\
                &=& - \sum_{i \in \zk} x_{i-1,0} V_{i}(p) \\
                &=& \cos (\pi (1 + \frac rk )) V_{cos}(p,r) + \sin (\pi (1 + \frac rk )) V_{sin}(p,r)
\end{eqnarray*}
and 
$$L_p^b(V_{sin}(p,r)) = -\sin (\pi (1 + \frac rk )) V_{cos}(p,r) + \cos (\pi (1 + \frac rk )) V_{sin}(p,r).$$

We also compute the image of $V_{cos}(p',r)$ and $V_{sin}(p',r)$ for $p' \ne p$. We write  $p'= -d+1+2w'$. Assume first that $p'> p$ (i.e., $w'>w$). One has 
\begin{eqnarray*} 
L_p^b(V_{cos}(p',r)) &= & \sum_{i \in \zk} x_{i,0}(\sum_0^{w'} L_p^b(\aleph((i,i+1),(-m,-m+1)) - \sum_1^{w'+1} L_p^b(\aleph((i-1,i),(m,m+1)))\\
   &= & V_{cos}(p',r) + \sum_{i \in \zk}x_{i,0}(\sum_0^{w'} (\delta_{m,w+1} - \delta_{m,0})V_{i+1}(p) - \sum_1^{w'+1} (\delta_{m,w+1} - \delta_{m,0})V_{i}(p))\\
   &=& V_{cos}(p',r) - \sum_{i \in \zk}x_{i,0} V_{i}(p) \\
  &=& V_{cos}(p',r) - V_{cos}(p,r).
\end{eqnarray*}

Similarly
$$ L_p^b(V_{sin}(p',r))= V_{sin}(p',r) - V_{sin}(p,r).$$

When $p'< p$ (i.e., $w'<w$), one obtains
\begin{eqnarray*} 
L_p^b(V_{cos}(p',r)) &= & V_{cos}(p',r) + \sum_{i \in \zk}x_{i,0}(\sum_0^{w'} (\delta_{m,w+1} - \delta_{m,0})V_{i+1}(p) - \sum_1^{w'+1} (\delta_{m,w+1} - \delta_{m,0})V_{i}(p))\\
   &=& V_{cos}(p',r) - \sum_{i \in \zk}x_{i,0} V_{i+1}(p)\\
    &=& V_{cos}(p',r) + L_p^b( V_{cos}(p,r)),
\end{eqnarray*}
and similarly
$$ L_p^b(V_{sin}(p',r))= V_{sin}(p',r) + L_p^b( V_{sin}(p,r)).$$

Thus we see that $L_p^b$ is the inverse of $L^t_p$.  

In summary, the group $RV(k,\ell)|_{H_r}$ is generated by the operators $L_p^t|_{H_r}$. 

\subsection{Formulas for $L_p^t$ as a complex operator}

Let $\rho:=\exp(i\pi\frac rk)$. The complex structure on $H_r$ is given by $1_{2k} .v = \rho v$. We have thus

 \begin{eqnarray*}
L_p^t(V_{cos}(p,r)) &= & \sum_{i \in \zk} x_{i,0} L_p^t(V_i(p)) \\
                &=& - \sum_{i \in \zk} x_{i,0} V_{i-1}(p) \\
                &=& -\rho^{-1} V_{cos}(p,r).
    \end{eqnarray*}            

For $p'>p$,
$$ L_p^t(V_{cos}(p',r)) = V_{cos}(p',r) - \rho^{-1} V_{cos}(p,r),$$

and for $p'<p$
$$  L_p^t(V_{cos}(p',r)) = V_{cos}(p',r) - V_{cos}(p,r).$$

\subsection{Computation of the hermitian form on $H_r$}\label{ss.signature}

We fix $0<r<k$ and abbreviate $Z(p):= V_{cos}(p,r)$, $x_i:=x_{i,0}$. We first compute the hermitian product

$$\langle Z(p),Z(p)\rangle:= (\sin(\pi \frac rk))^{-1} \omega(1_{2k} .Z(p),Z(p)).$$

\begin{lemma}\label{l.intersections}
Let $(a_i)_{i \in \zk}$ and $(b_i)_{i \in \zk}$ be real numbers with $\sum_i a_i = \sum_i b_i =0$. Then, we have 
$$\omega( \sum_i a_i V_i(p), \sum_i b_i V_i(p)) = \sum _{1 \leq i \leq i' <2k} (a_i b_{i'} - a_{i'} b_i).$$
For $p>p'$, we have
$$\omega( \sum_i a_i V_i(p), \sum_i b_i V_i(p')) = \sum _{1 \leq i \leq i' <2k} a_i b_{i'} .$$
\end{lemma}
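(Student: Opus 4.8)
The plan is to reduce the two identities to an intersection count of explicit geometric $1$-cycles, organised by the $\zk$-symmetry of $\mkl$ and by the sum-zero hypothesis.

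First I would pin down the geometry of $V_i(p)$. Writing $p=d-1-2w$, recall that $\aleph((i,i+1),(j,j+1))$ is the saddle connection oriented from the vertex $A(\varDelta)$ (with $(i,j)\in\varDelta$) to the midpoint $M((j,j+1))$, so that $\partial\aleph((i,i+1),(j,j+1))=M((j,j+1))-A(\varDelta)$. Using the identity $\varDelta_{i,m}=\varDelta_{i-1,m-1}$, the $A$-terms in $\partial V_i(p)$ telescope and one obtains
\[
\partial V_i(p)=M((0,1))-M((w+1,w+2)),
\]
\emph{independently of $i$}. Consequently $\sum_i a_iV_i(p)$ is a genuine absolute cycle as soon as $\sum_i a_i=0$, which is precisely what makes $\omega$ meaningful on the combinations in the statement. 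Moreover the same computation exhibits $V_i(p)$ as the simple zigzag arc
\[
M((w+1,w+2))\to A(\varDelta_{i-1,w})\to M((w,w+1))\to\cdots\to A(\varDelta_{i-1,0})\to M((0,1)),
\]
whose interior lies in the single polygon $Q_i$.

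Next I would use equivariance. Since $1_{2k}.V_i(p)=V_{i+1}(p)$ and $1_{2k}$ lies in $\ker\chi_{\sigma}$, the relation $\omega(1_{2k}.u,1_{2k}.v)=\chi_{\sigma}(1_{2k})\,\omega(u,v)=\omega(u,v)$ shows that all the intersection numbers depend only on the difference of the sheet indices. Because the interiors of distinct $Q_i$ are disjoint, two arcs $V_i(p)$ and $V_{i'}(p')$ meet only at marked points: at the two extremal midpoints $M((0,1))$ and $M((w+1,w+2))$, where \emph{all} the arcs accumulate, and at the $A$-vertices they share, namely those with $\varDelta_{i-1,m}=\varDelta_{i'-1,m'}$, that is $m-m'=i-i'$ (possible whenever the sheet indices differ by at most $w$). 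Thus the computation of $\omega(\sum a_iV_i(p),\sum b_{i'}V_{i'}(p'))$ localises entirely at these marked points.

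The main obstacle is the signed count at those points, for which I would invoke the rotation system of $\mkl$. Around a midpoint $M((m,m+1))$ the $2k$ half-edges $\aleph((i,i+1),(m,m+1))$ occur in the cyclic order $i=0,1,\dots,2k-1$, each crossing of $\aleph((i,i+1),\cdot)$ passing from $Q_i$ to $Q_{i+1}$; around $A(\varDelta)$ the sectors are cyclically permuted by the orbit map $(i,j)\mapsto(i-1,j-1)$. Resolving the cycle $\sum_i a_iV_i(p)$ at the two extremal midpoints and counting, with the signs dictated by these cyclic orders, its crossings with $\sum_{i'}b_{i'}V_{i'}(p')$ yields an antisymmetric pairing of the strands; the $A$-vertex contributions accumulate across the whole range $|i-i'|\le w$ and combine with the midpoint contributions. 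I expect the net outcome of the count to be the standard order pairing attached to the linear order $1<2<\dots<2k$: value $1$ on each pair with $i<i'$ in the equidistant case $p=p'$, and the ``nested'' value $1$ for $i\le i'$ when $p>p'$ (the arc of smaller parameter lying inside that of larger parameter). Finally I would feed the hypotheses $\sum_i a_i=\sum_i b_i=0$ into this pairing: they annihilate precisely the terms carrying the cut index $2k$, so the order pairing collapses to $\sum_{1\le i\le i'<2k}(a_ib_{i'}-a_{i'}b_i)$ in the first case and to $\sum_{1\le i\le i'<2k}a_ib_{i'}$ in the second, as claimed.
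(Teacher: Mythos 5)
Your setup coincides with how the paper's proof begins: the boundary computation $\partial V_i(p)=M((0,1))-M((w+1,w+2))$, the remark that $\sum_i a_i=0$ is exactly what makes $\sum_i a_iV_i(p)$ an absolute cycle, and the zigzag picture of $V_i(p)$ are all correct. The gap is in the intersection count itself, in two respects. First, your transversality claim is false: the arcs $V_i(p)$ do \emph{not} meet only at marked points, because they are carried by the glued edges of the polygons, not by their interiors (the "interior" of your zigzag lies in the \emph{closed} polygon $Q_i$, whose sides are shared with $Q_{i\pm1}$). Concretely, $V_i(p)$ contains the segments $-\aleph((i,i+1),(m,m+1))$ for $1\leq m\leq w+1$, while $V_{i+1}(p')$ contains $+\aleph((i,i+1),(m,m+1))$ for $0\leq m\leq w'$, so the two arcs overlap along entire segments whenever $\min(w+1,w')\geq 1$ (which always happens in the case $p>p'$, and happens for $p=p'$ as soon as $w\geq 1$). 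Hence there is no well-defined signed count "at the midpoints and $A$-vertices" until you replace the $V_i$'s by transverse representatives, and the choice of that perturbation is precisely where the signs come from. The paper does this explicitly: each $V_i$ is replaced by a homologous path from its initial midpoint to $M((0,1))$ running through the open polygon $Q_i$; then arcs in distinct polygons meet only at common endpoints, there are \emph{no} intersections at the $A$-vertices at all (your "$A$-vertex contributions accumulating over $|i-i'|\leq w$" is an artifact of the non-transverse picture), and everything localizes at $M((0,1))$ when $p>p'$, respectively at the two endpoints $M((0,1))$ and $M((w+1,w+2))$ when $p=p'$.

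Second, even granting a transversality convention, you never perform the count: "I expect the net outcome of the count to be the standard order pairing" \emph{is} the statement of the lemma. The entire content is to read off, from the cyclic order of the $2k$ strands entering $M((0,1))$ (and, in the first case, $M((w+1,w+2))$), that the resulting pairing is $\sum_{1\leq i\leq i'<2k}(a_ib_{i'}-a_{i'}b_i)$, respectively $\sum_{1\leq i\leq i'<2k}a_ib_{i'}$; asserting the expected answer leaves the lemma unproved. (Your equivariance remark, that $\omega(1_{2k}.u,1_{2k}.v)=\omega(u,v)$ and $1_{2k}.V_i(p)=V_{i+1}(p)$, is correct and could legitimately shorten the inspection, but note that intersection numbers of the individual $V_i$'s are not homologically well defined --- only the pairing of the zero-sum combinations is --- so it organizes the count rather than replacing it.)
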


\begin{proof}
Observe first that, although $V_i(p)$ is only a relative homology class with nonzero boundary equal to $M(0,1) - M(w+1,w+2)$ (where $p=d-1-2w$), the condition $\sum_i a_i =0$ insures that $ \sum_i a_i V_i(p)$ is an absolute homology class so the intersection form $\omega$ is well-defined in the formulas of the lemma.

\medskip

In the second case, it is easy to represent the cycles $V_i(p)$ (resp. $V_i(p')$) by paths from $M(w+1,w+2)$ (resp. $M(w'+1,w'+2)$) to $M(0,1)$ in $Q_i$ so that the intersection takes place only at $M(0,1)$. A direct inspection at this point gives the formula of the lemma.

\medskip

In the first case, we choose two distinct representations for each $V_i(p)$ as paths from $M(w+1, w+2)$ to $M(0,1)$ so that the intersection takes place only at $M(w+1,w+2)$ and $M(0,1)$. Again a direct inspection at these points gives the formula of the lemma.
\end{proof}

Using the first part of the lemma, we get (as $ \sum_i x_i =0$) 
\begin{eqnarray*}
 \omega(1_{2k} .Z(p),Z(p)) &=& \omega ( \sum x_i 1_{2k}.V_i(p), \sum x_i V_i(p)) \\
                          &=&  \omega ( \sum x_i V_{i+1}(p),  \sum x_i V_i(p)) \\
                          &=& \omega ( \sum x_{i-1} V_i(p),  \sum x_i V_i(p)) \\
                          &=& \sum _{1 \leq i \leq i' <2k} (x_{i-1}x_{i'} - x_{i}x_{i'-1})\\
                          &=& \sum_1^{2k-1} x_0 x_{i'} + \sum_1^{2k-2} x_i x_{2k-1} - \sum_1^{2k-1} x_i x_{i-1} - \sum_1^{2k-2} x^2_i \\
                          &=& -x_0^2 -x_{-1}(x_0 + x_{-1}) - \sum_1^{2k-1} x_i x_{i-1} - \sum_1^{2k-2} x^2_i \\
                          &=& -\sum_{i \in \zk} (x_i x_{i-1}+x^2_i) \\
                          &=& - \frac 14 \sum_{i \in \zk} (\rho ^i + \rho ^{-i})(\rho ^i + \rho ^{-i}+\rho ^{i-1} + \rho ^{-i+1})\\
                          &=&  - \frac k2 (2 + \rho + \rho^{-1}) \\
                           &=& -k(1+ \cos \pi \frac rk).
\end{eqnarray*}

We have thus 
$$ \langle Z(p),Z(p)\rangle:= -k \frac {1+ \cos \pi \frac rk}{\sin(\pi \frac rk)}.$$

We now compute 
 $$ 2\Re (\langle Z(p),Z(p')\rangle) = \langle Z(p)+Z(p'), Z(p)+Z(p')\rangle - \langle Z(p),Z(p)\rangle -  \langle Z(p'),Z(p')\rangle$$
for $p>p'$. We have (using the second part of Lemma \ref{l.intersections}) 

\begin{eqnarray*}
  2 \sin(\pi \frac rk) \Re (\langle Z(p),Z(p')\rangle) &=& \omega(1_{2k} .Z(p),Z(p')) + \omega(1_{2k} .Z(p'),Z(p)) \\
          &=& \omega (\sum x_{i-1} V_i(p),  \sum x_i V_i(p')) - \omega ( \sum x_i V_i(p), \sum x_{i-1} V_i(p')) \\
          &=& \sum _{1 \leq i \leq i' <2k} (x_{i-1}x_{i'} - x_{i}x_{i'-1})\\
          &=& -k(1+ \cos \pi \frac rk).
\end{eqnarray*}

This gives 

$$ \Re (\langle Z(p),Z(p')\rangle)= -\frac k2 \frac {1+ \cos \pi \frac rk}{\sin(\pi \frac rk)}.$$

We finally compute 
$$ 2\Re (\rho \langle Z(p),Z(p')\rangle) = \langle \rho Z(p)+ Z(p')), \rho Z(p)+  Z(p')\rangle - \langle Z(p),Z(p)\rangle -  \langle Z(p'),Z(p')\rangle$$
for $p>p'$. We have 
\begin{eqnarray*}
  2 \sin(\pi \frac rk) \Re ( \rho \langle Z(p),Z(p')\rangle) &=& \omega(1_{2k}^2 .Z(p), Z(p')) + \omega(1_{2k} .Z(p'),1_{2k}Z(p)) \\
          &=&   \omega(1_{2k} .Z(p), 1_{2k}^{-1} .Z(p')) + \omega(Z(p'),Z(p)) \\
          &=& \omega (\sum x_{i-1} V_i(p),  \sum x_{i+1} V_i(p')) - \omega ( \sum x_i V_i(p), \sum x_{i} V_i(p') )\\
          &=& \sum _{1 \leq i \leq i' <2k} (x_{i-1}x_{i'+1} - x_{i}x_{i'})  \\
          &=& \sum_1^{2k-1} x_0 x_{i'+1} + \sum_2^{2k-1} x_{i-1} x_{2k} - \sum_1^{2k-2} x_i x_{i+1} - \sum_1^{2k-1} x^2_i \\
          &=& -\sum_{i \in \zk} (x_i x_{i+1}+x^2_i) \\
          &=& -k(1+ \cos \pi \frac rk).
\end{eqnarray*}

This gives 

$$ \Re (\rho \langle Z(p),Z(p')\rangle)= -\frac k2 \frac {1+ \cos \pi \frac rk}{\sin(\pi \frac rk)}.$$

We finally get, for $p>p'$, 
\begin{eqnarray*}
\Im ( \langle Z(p),Z(p')\rangle)&=& (\Im(\rho))^{-1} ( \Re(\rho)\Re ( \langle Z(p),Z(p')\rangle) -  \Re (\rho \langle Z(p),Z(p')\rangle)) \\
    &=& -\frac k2 \frac {1+ \cos \pi \frac rk}{\sin(\pi \frac rk)} \frac {-1+ \cos \pi \frac rk}{\sin(\pi \frac rk)}\\
    &=& \frac k2.
\end{eqnarray*}

It is probably nicer to scale the hermitian form by the factor $-k\frac {1+ \cos \pi \frac rk}{\sin(\pi \frac rk)}$ in order to have $ \langle Z(p),Z(p)\rangle =1$ for all $p$. One has then, for $p>p'$
$$  \langle Z(p),Z(p')\rangle = \frac 12 ( 1 -i \tan \frac {\pi r}{2k}).$$

Let $u:=  \tan \frac {\pi r}{2k}$. Consider the hermitian form on $\Cset^{\Ad}$ defined by
$$ A((w(p))_{p\in\Ad}):= \langle \sum w(p)Z(p), \sum w(p) Z(p)\rangle = A_1 - u A_2,$$
with
$$ A_1 = \frac 12 ( \sum_p |w(p)|^2 + |\sum_p w(p) |^2)$$
and 
$$ A_2 = \sum_{p>p'} \Im (w(p') \overline {w(p)} ).$$

Observe that $A_1$ is positive, so we can diagonalize simultaneously $A_1$ and $A_2$. Let indeed $\xi:= \exp \frac {2i\pi}{\ell}$ (recall that $\ell = d+1$). Define, for $0<s<\ell$, $p=d-1-2w$

$$w_s(p) = \xi^{sw}.$$

We have then, for $z_1, \ldots, z_d \in \Cset$

\begin{eqnarray*}
A_1(\sum_s z_s w_s) &=& \frac 12 ( \sum_p |\sum_s z_s \xi^{s w}|^2 + |\sum_p \sum_s z_s  \xi^{s w} |^2 )\\
        &=& \frac 12 ( \sum_p \sum_s \sum_{s'} z_s \overline{z_{s'}} \xi^{(s-s')w} + |\sum z_s \xi ^{-s}|^2) \\
        &=& \frac { \ell}2 \sum_s |z_s|^2,
\end{eqnarray*}        
and
\begin{eqnarray*}
A_2(\sum_s z_s w_s) &=& \sum_{p>p'} \Im (\sum_{s'}\sum_s z_{s'} \overline {z_s} w_{s'}(p') \overline {w_s(p)}) \\
                 &=& \sum_{s'}\sum_s \Im ( z_{s'} \overline {z_s}\sum_{0 \leq w <w'<\ell -1} \xi^{s'w'-s w}).
\end{eqnarray*}  
For $s\ne s'$, one has
\begin{eqnarray*}
\sum_{0 \leq w <w'<\ell -1} \xi^{s'w'-s w} &=& \sum_{0<w'<\ell -1} \xi^{s'w'} \frac {\xi^{-sw'} -1}{\xi^{-s} -1} \\
                                   &=& \frac {-1-\xi^{s-s'}+1+\xi^{-s'}} {\xi^{-s} -1} \\
                                   &=& \xi^{s-s'},
\end{eqnarray*}  
and 
$$\Im ( z_{s'} \overline {z_s} \xi^{s-s'} + z_{s} \overline {z_s'} \xi^{s'-s}) =0.$$
On the other hand, for $0<s<\ell$, we have
\begin{eqnarray*}
\sum_{0 \leq w <w'<\ell -1} \xi^{s w'-s w} &=& \sum_{0<w'<\ell -1} \xi^{s w'} \frac {\xi^{-sw'} -1}{\xi^{-s} -1} \\
      &=& \frac {\ell + \xi^{-s} -1}{\xi^{-s} -1}
\end{eqnarray*}
We have therefore
\begin{eqnarray*}
A_2(\sum_s z_s w_s) &=& \sum_s \Im  (|z_s|^2 \sum_{0 \leq w <w'<\ell -1} \xi^{s w'-s w})\\
                    &=& \ell  \sum_s |z_s|^2 \Im \frac 1{\xi^{-s} -1} \\
                    &=& \frac { \ell}2 \sum_s (\tan \frac{\pi s}{\ell})^{-1} |z_s|^2.
\end{eqnarray*}

In particular, this shows that the hermitian form on $H_r$ has the signature described in Theorem \ref{t.A} (as expected from McMullen's paper \cite{McM}). 

\subsection{Matricial description of $RV(k,\ell)|_{H_r}$}\label{ss.reduction} Our discussion so far is summarized as follows. The group $RV(k,\ell)|_{H_r}$ is generated by the operators $L_p^t$, $p\in\Ad$, given by  
$$  L_p^t(V_{\cos}(p',r)) = \left \{ \begin{array}{cc}  
V_{\cos}(p',r) - \rho^{-1} V_{\cos}(p,r)   & \text{if} \; p'>p  \\ 
V_{\cos}(p',r) - V_{\cos}(p,r)   &  \text{if} \; p'<p \\
 -\rho^{-1} V_{\cos}(p,r)   &  \text{if} \; p'=p
  \end{array} \right.  $$ 
These operators preserve the hermitian form 
$$\langle V_{\cos}(p,r), V_{\cos}(p,r)\rangle =1, \quad \quad \langle V_{\cos}(p,r), V_{\cos}(p',r)\rangle = \frac 12 ( 1 -i \tan \frac {\pi r}{2k}) \quad \forall\, p>p'$$ 
  
At this point, we reduced the proof of Theorem \ref{t.A} to the analysis of the group generated by the matrices above.


\section{Matrices}\label{s.matrices}

In this section, we complete the proof of Theorem \ref{t.A} by studying the groups of matrices. Since we do not need anymore to make reference to the homology groups translation surfaces $\mkl$, we are going to rewrite below the formulas from Subsection \ref{ss.reduction} using a slightly more abstract notation. 

\subsection{Setting}

\begin{itemize}
\item $d$ is an integer $\ge 2$, $\ell = d+1$.
\item $\rho$ is a complex number of modulus $1$ with positive imaginary part, frequently a root of unity; $\zeta:= \rho^{-1}$. We write $\rho = \exp 2 \pi i \alpha$, $\alpha \in (0,\frac 12)$.
\item $p$ is an integer running from $1-d$ to $d-1$ with step $2$, and thus taking $d$ values. We denote by $\Ad$ the set of values of $p$.
\item $(e_p)$ is the canonical basis of $\Cset^{\Ad}$.
\end{itemize}

\subsection{The operators}\label{operators}

For $p \in \Ad$, we define an operator $L_p =L_p^t$ on  $\Cset^{\Ad}$ by 
$$  L_p(e_q) = \left \{ \begin{array}{cc}  
e_q - e_p   & \text{if} \; q>p  \\ 
e_q - \zeta e_p   &  \text{if} \; q<p \\
 -\zeta e_p   &  \text{if} \; q=p
  \end{array} \right.  $$

Observe that the inverse $L_p^{-1} =L_p^b$ is given by 
$$  L_p^b(e_q) = \left \{ \begin{array}{cc}  
e_q - e_p   & \text{if} \; q<p  \\ 
e_q  -  \rho e_p   &  \text{if} \; q>p \\
 -\rho e_p   &  \text{if} \; q=p
  \end{array} \right.  $$
i.e. the same formula, changing $\zeta$ to $\rho$ and inverting the order on $\Ad$.

\subsection{The invariant hermitian form}\label{ssdiag}

Let $Q_{\alpha}$ be the hermitian form on $\CAD$  such that
$$ Q_{\alpha}(e_p) = 1,  \quad \forall p \in \Ad,  \quad Q_{\alpha}(e_p,e_{p'}) = (1 + \zeta)^{-1} = \frac 12 (1 + i \tan \pi \alpha ) , \forall p > p' .$$

\subsection{The case $d=2$}\label{ssd2}

\smallskip

\subsubsection{A special element}

\smallskip

We compute $L_{-1}^b \circ L_1^t$

$$\left\{  \begin{array}{llll}
L_{-1}^b \circ L_1^t(e_{-1}) &= L_{-1}^b (e_{-1} -\zeta e_1) &= -\rho e_{-1} -\zeta(e_1 - \rho e_{-1})& = (1-\rho) e_{-1} -\zeta e_1,\\
L_{-1}^b \circ L_1^t(e_{1})&= L_{-1}^b(-\zeta e_1) &= -\zeta( e_1 -\rho e_{-1}) &= e_{-1} - \zeta e_1. 
\end{array} \right.$$

\medskip

We thus have $\det (L_{-1}^b \circ L_1^t) =1$ and $\textrm{tr} (L_{-1}^b \circ L_1^t) = 1 - ( \rho + \rho^{-1})$.

\begin{lemma}
The operator  $L_{-1}^b \circ L_1^t$ has infinite order if $\rho$ is a root of unity but $\rho^4 \ne 1$, $\rho^6 \ne 1$. It is hyperbolic if $ \rho + \rho^{-1}<-1$.
\end{lemma}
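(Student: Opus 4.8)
The plan is to reduce everything to the trace of $A:=L_{-1}^b\circ L_1^t$. The excerpt already records $\det A=1$ and $\operatorname{tr}A=1-(\rho+\rho^{-1})$; since $|\rho|=1$ we have $\rho+\rho^{-1}=2\cos 2\pi\alpha\in\mathbb{R}$, so $\tau:=\operatorname{tr}A=1-2\cos 2\pi\alpha$ is a \emph{real} number, lying in $(-1,3)$ for $\alpha\in(0,\tfrac12)$. As $A\in SL(2,\mathbb{C})$ has real trace, its spectrum is entirely governed by $\tau$: the eigenvalues are the roots of $x^2-\tau x+1$. For the hyperbolic statement I would simply note that $\rho+\rho^{-1}<-1$ is equivalent to $\tau>2$, whence the discriminant $\tau^2-4$ is positive and $A$ has two distinct real reciprocal eigenvalues $\lambda>1>\lambda^{-1}>0$; thus $A$ is hyperbolic, and in particular of infinite order.

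For the order statement I would use the elementary fact that a matrix in $SL(2,\mathbb{C})$ has finite order if and only if it is diagonalizable with root-of-unity eigenvalues. In the range left open after the hyperbolic case one has $|\tau|<2$ (note $\tau>-1>-2$ throughout, while $\tau=2$ occurs only at $\alpha=\tfrac13$, i.e. $\rho^3=1$, where $A$ is a nontrivial parabolic of infinite order). Hence $A$ is elliptic with distinct eigenvalues $e^{\pm i\varphi}$, $2\cos\varphi=\tau$, and $A$ has finite order precisely when $\varphi\in 2\pi\mathbb{Q}$, i.e. when $\tau=2\cos 2\pi\beta$ for some $\beta\in\mathbb{Q}$. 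Writing the second eigenvalue as a root of unity $\mu$, this is exactly the relation $\rho+\rho^{-1}+\mu+\mu^{-1}=1$, i.e. a vanishing sum $\rho+\rho^{-1}+\mu+\mu^{-1}+(-1)=0$ of five roots of unity. So the whole question becomes arithmetic: for which roots of unity $\rho$ can $1-2\cos 2\pi\alpha$ again be twice the cosine of a rational angle?

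The cleanest engine I would deploy is a Galois-conjugacy bound. If $\rho$ has order $n$ then $\tau=1-(\rho+\rho^{-1})$ is an algebraic integer whose conjugates are exactly $1-2\cos(2\pi u/n)$, $u\in(\mathbb{Z}/n)^{\times}$. Were $A$ of finite order, $\tau=\mu+\mu^{-1}$ would be totally real with \emph{all} conjugates in $[-2,2]$; but a conjugate escapes $[-2,2]$ as soon as some unit $u$ has $\cos(2\pi u/n)<-\tfrac12$, i.e. $u/n\bmod 1\in(\tfrac13,\tfrac23)$. For odd $n>3$ one checks that $u=(n-1)/2$ is a unit lying in this middle third, and a short analysis handles the even case, so the avoidance condition fails for all $n$ outside a short explicit list. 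Inspecting that list recovers the expected finite-order parameters $\rho^4=1$ (where $\tau=1=2\cos\tfrac{\pi}{3}$) and $\rho^6=1$ (where the primitive case gives $\tau=0=2\cos\tfrac{\pi}{2}$, order $4$). Alternatively one may invoke the Conway--Jones/Mann classification of rational relations among cosines of rational angles directly.

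The hard part is precisely this classification, and the delicate point is a sporadic coincidence that the crude conjugacy bound does \emph{not} detect. Indeed the middle-third avoidance also holds at $n=10$, where the Conway--Jones relation $\cos\tfrac{\pi}{5}+\cos\tfrac{3\pi}{5}=\tfrac12$ yields $1-2\cos\tfrac{2\pi}{10}=2\cos\tfrac{6\pi}{10}$; there $A$ has eigenvalues that are primitive tenth roots of unity and is genuinely of finite order. The main obstacle is therefore to pin down the complete finite list of finite-order parameters: the argument must confront this golden-ratio relation, and I would expect the correct statement to exclude $\rho^{10}=1$ alongside $\rho^4=1$ and $\rho^6=1$. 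Everything else — the two displayed images of $L^b_{-1}\circ L^t_1$ and the resulting determinant and trace — is routine verification already carried out in the excerpt.
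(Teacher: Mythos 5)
Your analysis is correct, and what you have found is not a gap in your own argument but a genuine error in the paper: the lemma is false as stated, and the paper's own proof breaks down at exactly the point you flagged. The paper's argument for the infinite-order assertion is precisely your Galois-conjugacy bound: it asserts that $\rho^4\ne 1$, $\rho^6\ne 1$ force some Galois conjugate $\rho'$ of $\rho$ to satisfy $\rho'+\rho'^{-1}<-1$, and then deduces infinite order from hyperbolicity of the conjugate matrix. For $\rho$ a primitive $10$th root of unity this assertion fails: the units modulo $10$ are $\pm 1,\pm 3$, so the conjugates of $\rho+\rho^{-1}$ are $2\cos\frac{\pi}{5}=\frac{1+\sqrt{5}}{2}$ and $2\cos\frac{3\pi}{5}=\frac{1-\sqrt{5}}{2}$, both larger than $-1$. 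Moreover, as you observe via $\cos\frac{\pi}{5}-\cos\frac{2\pi}{5}=\frac12$, the conclusion itself fails there: for $\rho=e^{i\pi/5}$ the trace is $1-2\cos\frac{\pi}{5}=2\cos\frac{3\pi}{5}$, so $L_{-1}^b\circ L_1^t$ has distinct eigenvalues $e^{\pm 3i\pi/5}$, is diagonalizable, and has order exactly $10$ (similarly, for $\rho=e^{3i\pi/5}$ the eigenvalues are $e^{\pm i\pi/5}$). Your proposed repair is essentially the right one, with one refinement: the sharp hypothesis excludes only \emph{primitive} $10$th roots of unity, since for a primitive $5$th root the hyperbolic conjugate does exist ($2\cos\frac{4\pi}{5}=-\frac{1+\sqrt{5}}{2}<-1$) and the lemma's conclusion holds; excluding all $\rho$ with $\rho^{10}=1$ is also a true statement, just slightly weaker than necessary. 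Your Conway--Jones/vanishing-sums reduction (five roots of unity summing to zero) is the correct way to confirm there are no exceptions beyond orders $4$, $6$ and $10$.

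It is worth recording that the error propagates, since this lemma is the $d=2$ base of the paper's induction. For $\alpha\in\{\frac{1}{10},\frac{3}{10}\}$ the group generated by $L_1$ and $L_{-1}$ is genuinely finite, not merely missing this particular infinite-order element: $L_1$ and $L_{-1}$ satisfy the braid relation $L_1L_{-1}L_1=L_{-1}L_1L_{-1}$, are diagonalizable (here $Q_\alpha$ is definite), and their nontrivial eigenvalue $-\zeta$ is a primitive $5$th root of unity, so they generate a quotient of Coxeter's truncated braid group $B_3(5)$, which is finite (of order $600$). This is the same phenomenon the paper isolates as exceptional for $\alpha=\frac16$ and $\alpha=\frac14$, where one gets quotients of $B_3(3)$ and $B_3(4)$, of orders $24$ and $96$. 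Accordingly, $\alpha=\frac{1}{10},\frac{3}{10}$ would have to be added to the exceptional cases: the density claim in Subsection \ref{ssd2} for $0<\alpha<\frac13$, $\alpha\ne\frac16,\frac14$, and with it part (b) of Theorem \ref{t.A} (for instance $\ell=3$, $k=5$, $r\in\{1,3\}$), fail as stated and would require either a corrected statement or a separate treatment starting in higher dimension, as the paper does for $\frac16$ and $\frac14$.
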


\begin{proof}
The second assertion is clear. For the first, if $\rho$ is a root of unity but $\rho^4 \ne 1$, $\rho^6 \ne 1$, there exists a Galois-conjugate $\rho'$ of $\rho$ such that 
$ \rho' + \rho'^{-1}<-1$. Then the corresponding Galois-conjugate of the matrix of $L_{-1}^b \circ L_1^t$ has infinite order. The same is true of  the matrix of $L_{-1}^b \circ L_1^t$.
\end{proof}

\subsubsection{The cases $\alpha = \frac 16 , \frac 14$}

\smallskip

In these cases, the hermitian form has signature $(2,0)$. The coefficients of the matrices of the group generated by $L_1$ and $L_{-1}$ belong to the ring of integers of the quadratic field 
$\Qset(\rho)$ and are bounded, hence the group  generated by $L_1$ and $L_{-1}$ is finite.

\subsubsection{The case $\alpha = \frac 13$}

\smallskip

Let $\rho = j:= \exp \frac{2\pi i}3 $, $e:= e_{-1} +j e_1$, $f:=  e_{-1} +j^2 e_1$. We have $L_{-1}(e) = L_1(e) =e$ and $L_{-1}(f) = -j^2 e -j^2 f$, $L_1(f) =  -j e -j^2 f$. The subgroup generated by $L_{-1}$ and $L_1$ is therefore contained in 
$$ \Gamma := \{ L \in GL(2,\Cset), L(e) =e, L(f) = \mu f + \omega e, \mu^6 =1, \omega \in \Zset \oplus \Zset j \}.$$ 
Observe that  $L_{-1}^b \circ L_1^t$ is parabolic, i.e satisfies $\mu =1$, $\omega \ne 0$.

\subsubsection{The case $0 < \alpha < \frac 13$ ,$\alpha \ne \frac 14 , \frac 16$}

\smallskip

In this case, the hermitian form has signature $(2,0)$. Denote by $U(Q_{\alpha})$ and $SU(Q_{\alpha})$ the associated unitary and special unitary groups.

\smallskip

 The operator $L_{-1}^b \circ L_1^t$ belongs to $SU(Q_{\alpha})$ and has infinite order hence the closed (for the usual topology) subgroup generated by $L_{-1}^b \circ L_1^t$ is a one-parameter group isomorphic to a circle, consisting of those transformations of $SU(Q_{\alpha})$ having the same eigenvectors than $L_{-1}^b \circ L_1^t$. An infinitesimal generator of this one-parameter group is the element $X$ of the Lie algebra  $su(Q_{\alpha})$ which satisfies $X.v_{\pm} =\pm  i v_{\pm}$, where $v_{\pm}$ are the eigenvectors of $L_{-1}^b \circ L_1^t$.
 
 \smallskip

The eigenvalues $\lambda_{\pm}$ of $L_{-1}^b \circ L_1^t$ are solutions of $\lambda^2  - (1-\zeta -\rho) \lambda +1 =0$, and the corresponding eigenvectors are 
$$ v_{\pm} = a_{\pm} e_{-1} + e_1 ,\quad \quad  a_{\pm} = -1-\rho \lambda_{\pm}. $$ 
The coefficients of $X$ are given by
$$  X_{-1\,-1} = \frac {i (a_+ +a_-)}{a_+  -a_- }= -X_{1\,1} ,\quad X_{1\,-1} = \frac{2 i}{a_+  -a_- },\quad   X_{-1\,1} = \frac {2ia_+ a_-}{a_+  -a_- }.$$

For $n \in \Zset$,  write $\textrm{ad}(L_{-1}^n) X =:X(n)$, which is the infinitesimal generator of the previous one-parameter group conjugated by $L_{-1}^n$. Setting also 
$L_{-1}^n v_{\pm} =: v_{\pm}(n) =: a_{\pm}(n) e_{-1} + e_1$, we have 

$$  X_{-1\,-1}(n) = \frac {i (a_+(n) +a_-(n))}{a_+(n)  -a_-(n) }= -X_{1\,1}(n) ,\quad X_{1\,-1}(n) = \frac{2 i}{a_+(n)  -a_-(n) },$$
$$  X_{-1\,1}(n) = \frac {2ia_+(n) a_-(n)}{a_+(n)  -a_-(n) }$$
with the recurrence relation $a(n+1) = - \zeta a(n) -1$. 

\smallskip

We now show that the vector fields $X=X(0), X(1), X(2)$ are linearly independent (over $\Cset$). The sequences 
$s(n):= a_+(n) +a_-(n)$ and $p(n):= a_+(n) a_-(n)$ satisfy the recurrence relations $s(n+1) = -\zeta s(n) -2$, $p(n+1) = \zeta^2 p(n) + \zeta s(n) +1$ with initial conditions
$ s(0) = -1-\rho +\rho^2$, $p(0) = \rho$. We have thus

$$ s(1) = \zeta -1-\rho,\quad s(2) = -\zeta^2 +\zeta -1 ,\quad p(1)= \rho, \quad p(2) = \zeta^2.$$

As we have 

$$ \det \left ( \begin{array}{ccc}1&p(0) & s(0) \\ 1&p(1) & s(1) \\ 1&p(2) & s(2) \end{array} \right  ) = (1-\zeta^3)(1-\rho^3) \ne 0, $$

the vector fields $X=X(0), X(1), X(2)$ are indeed linearly independent. The Lie algebra $su(Q_{\alpha}) $ has dimension $3$, hence is generated by  $X(0), X(1), X(2)$ . 

\smallskip

We conclude that the intersection of $SU(Q_{\alpha})$ with the group generated by $L_1$ and $L_{-1}$ is dense (for the usual topology) in  $SU(Q_{\alpha})$.

\subsubsection{The case $\frac 13 < \alpha < \frac 12$}

In this case, the hermitian form has signature $(1,1)$. Denote by $U(Q_{\alpha})$ and $SU(Q_{\alpha})$ the associated unitary and special unitary groups.

\smallskip

 The operator $L_{-1}^b \circ L_1^t$ belongs to $SU(Q_{\alpha})$ and has eigenvalues $\lambda_+ >1$ and $\lambda_- = \lambda_+^{-1}$.
  Let $v_{\pm}$ be the associated eigenvectors. The Zariski closure of the 
 group generated by  $L_{-1}^b \circ L_1^t$  is the one-parameter group having for infinitesimal generator  a  vectorfield $X$ satisfying $X.v_{\pm} =\pm   v_{\pm}$. 
 The same calculation as in the previous case shows that $X$, $\textrm{ad}(L_{-1}) X$ and $\textrm{ad}(L_{-1}^2) X$ span $su(Q_{\alpha}) $. We conclude 
  that the intersection of $SU(Q_{\alpha})$ with the group generated by $L_1$ and $L_{-1}$ is dense (for the Zariski topology) in  $SU(Q_{\alpha})$.

\subsection{The induction step in the generic case}

\subsubsection{ Restrictions}

For $d \ge 3$, $p \in \Ad$, let $\iota_p$ be the embedding of $\A_{d-1}$ into $\Ad$ sending $q$ to $q-1$ if $q<p$ and to $q+1$ if $q>p$ (observe that $p \notin \Ad$).
The image of $\iota_p$ is $\Ad - \{p\}$. We also denote by $\iota_p$ the embedding of $\Cset^{\A_{d-1}}$ into $\Cset^{\Ad}$ such that $\iota_p(e_q) = e_{\iota_p(q)}$, and by 
$H_p$ the hyperplane of $\Cset^{\Ad}$ which is the image of this embedding.

\smallskip

From the defining formulas, for all $p \in  \Ad, q \in \A_{d-1}$ , the hyperplane $H_p$ is invariant under $L_{\iota_p(q)}$ and we have

$$ \iota_p \circ L_q = L_{\iota_p(q)} \circ \iota_p. $$

In order to avoid confusion, we denote by $Q'_{\alpha}$ the hermitian form on $\Cset^{\A_{d-1}}$ denoted by $Q_{\alpha}$ previously, and keep the notation $Q_{\alpha}$ for 
the hermitian form on  $\Cset^{\Ad}$.

\begin{lemma} \label{restrict}
For any $p \in \Ad$ , the restriction  to $H_p$ of the form $Q_{\alpha}$ on   $\Cset^{\Ad}$ is equal to the image  of  $Q'_{\alpha}$ under $\iota_p$. 
\end{lemma}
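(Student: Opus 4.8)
The plan is to verify the claimed identity of hermitian forms directly on the basis vectors, which reduces to a finite check comparing diagonal and off-diagonal entries. Recall that $\iota_p$ identifies $\Cset^{\A_{d-1}}$ with the hyperplane $H_p \subset \Cset^{\Ad}$ via $e_q \mapsto e_{\iota_p(q)}$, where $\iota_p$ maps $q < p$ to $q-1$ and $q > p$ to $q+1$. The assertion that the restriction $Q_\alpha|_{H_p}$ equals the pushforward $(\iota_p)_* Q'_\alpha$ is equivalent to the statement
\begin{equation*}
Q_\alpha(e_{\iota_p(q)}, e_{\iota_p(q')}) = Q'_\alpha(e_q, e_{q'}) \quad \forall\, q, q' \in \A_{d-1}.
\end{equation*}
So the first step is simply to unwind both sides using the definitions of $Q_\alpha$ and $Q'_\alpha$ from Subsection \ref{ssdiag}.

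The key observation, and the only thing that needs checking, is that $\iota_p$ is \emph{order-preserving} as a map $\A_{d-1} \to \Ad$. For any $q < q'$ in $\A_{d-1}$, I would check that $\iota_p(q) < \iota_p(q')$ in $\Ad$: if both are below $p$ or both above $p$, the shift by $-1$ or $+1$ is monotone; and if $q < p \le q'$, then $\iota_p(q) = q-1 < p < q'+1 = \iota_p(q')$ (using that $p \notin \A_{d-1}$, so no value equals $p$). Given monotonicity, the diagonal entries match trivially since $Q_\alpha(e_{\iota_p(q)}) = 1 = Q'_\alpha(e_q)$, and for the off-diagonal entries the prescription $Q_\alpha(e_a, e_b) = \frac12(1 + i\tan\pi\alpha)$ for $a > b$ transfers verbatim: $q > q'$ forces $\iota_p(q) > \iota_p(q')$, so both sides equal $(1+\zeta)^{-1}$.

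I expect no serious obstacle here; this is genuinely a bookkeeping lemma whose content is entirely captured by the monotonicity of $\iota_p$. The mild subtlety worth stating explicitly is that the value of the off-diagonal entry of $Q_\alpha$ depends only on the \emph{order} of the two indices and not on their actual positions in $\Ad$ — this is precisely why a form defined by ``$1$ on the diagonal, $(1+\zeta)^{-1}$ below the diagonal'' restricts to a form of the same shape on any subset, once that subset is listed in its induced order. Thus the proof is: observe $\iota_p$ is order-preserving, then compare entries case by case, and conclude by bilinearity (sesquilinearity) that the two hermitian forms agree on all of $H_p$.
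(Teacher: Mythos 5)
Your proof is correct and fills in exactly what the paper leaves implicit: the paper's own proof is the one-line remark that the lemma is clear from the definitions, and your verification — that $\iota_p$ is order-preserving and that the entries of $Q_{\alpha}$ depend only on the relative order of the indices, not their positions in $\Ad$ — is precisely the bookkeeping behind that claim. No discrepancy with the paper's approach.
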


\begin{proof}
This is clear from subsection \ref{operators}
\end{proof}

When $\rho^{d+1} \ne 1$, we denote by $H'_p$ the $1$-dimensional subspace which is  the \linebreak $Q_{\alpha}$-orthogonal of $H_p$. As $Q_{\alpha}$ is non  degenerate and 
$\bigcap_{p \in \Ad} H_p = \{0\}$, $\Cset^{\Ad}$ is the direct sum of the $H'_p$. 

\smallskip

When moreover $\rho^d \ne 1$, the restriction of $Q_{\alpha}$ to each $H_p$ is non degenerate by the lemma. Therefore  $\Cset^{\Ad}$ is the direct sum of $H_p$ and $H'_p$. From the formulas for $L_q$, $q\ne p$, we see that the line $H'_p$ is point wise fixed under all $L_q$, $q\ne p$. 

\subsubsection{A result on stabilizers}

\begin{proposition}\label{stabilizers}
Let $d \ge 3$, $Q$ a non degenerate hermitian form on $\Cset^d$, $u_1, u_2  \in \Cset^d $  linearly independent vectors such that $Q(u_1) =Q(u_2) \ne 0$.
 Let $SU(Q)$ be the special unitary group of $Q$, and  , for $j=1,2$, let $G_j$ be the stabilizer of $u_j$ in  $SU(Q)$. Then the smallest closed subgroup containing 
 $G_1 \cup G_2 $ is $SU(Q)$.
\end{proposition}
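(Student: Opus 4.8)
The plan is to prove Proposition \ref{stabilizers} by a Lie-theoretic argument, showing that the Lie algebra of the smallest closed subgroup $H$ containing $G_1\cup G_2$ is all of $su(Q)$, which forces $H=SU(Q)$ since $SU(Q)$ is connected. First I would identify the Lie algebras $\mathfrak{g}_j := \mathrm{Lie}(G_j)$ as subalgebras of $su(Q)$: since $G_j$ is the stabilizer of the vector $u_j$ in $SU(Q)$, its Lie algebra consists of those $X\in su(Q)$ with $X.u_j=0$. Because $Q(u_j)\neq 0$, the vector $u_j$ is non-isotropic, so its $Q$-orthogonal complement $u_j^\perp$ is a non-degenerate hyperplane and $G_j\cong SU(Q|_{u_j^\perp})$ acting trivially on the line $\Cset u_j$. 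Thus $\mathfrak{g}_j$ is (isomorphic to) $su$ of a form on the $(d-1)$-dimensional space $u_j^\perp$.

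The key step is to show that $\mathfrak{g}_1+\mathfrak{g}_2$, together with the brackets it generates, exhausts $su(Q)$. I would argue by dimension counting combined with an irreducibility argument. Each $\mathfrak{g}_j$ has dimension $(d-1)^2-1$ (the dimension of $su(d-1)$ over the reals, as a real Lie algebra), while $su(Q)$ has real dimension $d^2-1$. Let $\mathfrak{h}$ be the Lie subalgebra generated by $\mathfrak{g}_1\cup\mathfrak{g}_2$. The decomposition into isotypic pieces under the adjoint action shows that $\mathfrak{h}$ is an ideal in some larger structure; more concretely, I would consider the action of $\mathfrak{h}$ on $\Cset^d$ and show it is irreducible. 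Since $u_1,u_2$ are linearly independent, the only subspaces fixed pointwise by all of $\mathfrak{g}_1$ are $\Cset u_1$ and its containing flags; intersecting the fixed-data of $\mathfrak{g}_1$ and $\mathfrak{g}_2$ and using linear independence of $u_1,u_2$, one sees $\mathfrak{h}$ leaves no proper nonzero subspace invariant. An irreducible real subalgebra of $su(Q)$ acting on $\Cset^d$ that contains two copies of $su(d-1)$ must, by the classification of maximal subalgebras (or by a direct bracket computation), be all of $su(Q)$.

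For the direct bracket computation, which I expect to be the cleanest route in low-dimensional language, I would take $X_1\in\mathfrak{g}_1$ and $X_2\in\mathfrak{g}_2$ and compute $[X_1,X_2]$, checking that such brackets produce elements outside $\mathfrak{g}_1+\mathfrak{g}_2$ and that iterating the bracket fills out the complementary directions. Concretely, choosing a basis adapted to the flag $\Cset u_1\subset (\Cset u_2)^\perp$ and examining the off-diagonal blocks, one finds that the commutators generate the root spaces linking $u_1$ and $u_2$ to the rest of the space, and these missing root spaces span the complement of $\mathfrak{g}_1+\mathfrak{g}_2$ in $su(Q)$. The main obstacle is the bookkeeping: ensuring that the hypothesis $Q(u_1)=Q(u_2)\neq 0$ (and not merely $u_1,u_2$ being non-isotropic) is genuinely used—this equality guarantees the two stabilizers are conjugate copies of the same group and that no signature obstruction prevents the brackets from reaching the full algebra. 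I would verify that the condition rules out degenerate configurations where $u_1,u_2$ span an isotropic or totally degenerate plane, in which case $\mathfrak{g}_1+\mathfrak{g}_2$ could fail to generate everything.

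Finally, once $\mathrm{Lie}(H)=su(Q)$ is established, connectedness of $SU(Q)$ gives $H\supseteq SU(Q)$, and since $H\subseteq SU(Q)$ by construction (each $G_j\subseteq SU(Q)$), I conclude $H=SU(Q)$, completing the proof.
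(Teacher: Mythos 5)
Your overall strategy (generate the Lie algebra, then use connectedness of $SU(Q)$) is reasonable in outline, but the proposal has a genuine gap at its central step: you never actually prove that the Lie algebra generated by $\mathfrak{g}_1$ and $\mathfrak{g}_2$ is all of $su(Q)$. Irreducibility of the action on $\Cset^d$ is far from sufficient — $so(d)\subset su(d)$ and $sp(d/2)\subset su(d)$ are proper subalgebras acting irreducibly — so the assertion that ``an irreducible real subalgebra containing two copies of $su(d-1)$ must be all of $su(Q)$'' is essentially a restatement of what has to be proven, and neither of your proposed justifications (classification of maximal subalgebras, or a ``direct bracket computation'' whose bookkeeping you defer) is carried out. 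The difficulty is real, not clerical: when $Q$ is indefinite the stabilizer $\mathfrak{g}_j$ can be a non-compact form such as $su(1,1)$, and then the $\mathfrak{g}_j$-module complement of $\mathfrak{g}_j\oplus\Rset h$ in $su(Q)$ need not even be irreducible over $\Rset$ (the standard representation of $su(1,1)$ on $\Cset^2$ admits an invariant real structure), so the clean root-space picture you describe for an ``adapted basis'' breaks down precisely in the cases that matter.

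Moreover, your hoped-for use of the hypothesis is backwards. You say the condition $Q(u_1)=Q(u_2)\neq 0$ should ``rule out degenerate configurations where $u_1,u_2$ span a degenerate plane''; it does not. For indefinite $Q$ one can have $u_2=u_1+w$ with $w\neq 0$ isotropic and $Q$-orthogonal to $u_1$, so that $Q(u_1)=Q(u_2)\neq 0$ yet the plane $\mathrm{span}(u_1,u_2)$ is degenerate with kernel $\Cset w$. Such configurations are admissible and any proof must handle them; in the paper's argument this is exactly the hardest part. Note also that the paper's route is entirely different from yours: it avoids Lie algebras and instead shows the closed group $G$ generated by $G_1\cup G_2$ acts transitively on the level set $\{Q(u)=c\}$ (which suffices, since then for $h\in SU(Q)$ one finds $g\in G$ with $g(u_1)=h(u_1)$, whence $g^{-1}h\in G_1$). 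Transitivity is obtained by showing orbits are open in the connected set $\{Q(u)=c\}$, via Witt's theorem and an analysis of the sets $Q(Gu_0,u_i)\subset\Cset$, with a separate explicit construction (hyperbolic rotations on a well-chosen $2$-plane) to dispose of the degenerate case described above — the case your sketch hopes to exclude rather than treat.
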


\begin{proof}
Let $G$ be the smallest closed subgroup of $SU(Q)$ containing  $G_1 \cup G_2 $. Let $c$ be the common value of the $Q(u_j)$. It is sufficient to prove that $G$ acts transitively on
 $\{Q(u)=c\}$. Indeed, assume this is true, and let $h$ be an element of $SU(Q)$; there exists $g\in G$ such that $g(u_1) = h(u_1)$; then $g^{-1}h \in G_1$ and $h \in G$.
 To prove that $G$ acts transitively on $\{Q(u)=c\}$, we observe that, as $\{Q(u)=c\}$ is connected\footnote{Since we are assuming that $c=Q(u_1)=Q(u_2)\ne 0$, the desired connectedness follows from Witt's theorem (see also the claim in the proof of Lemma \ref{l.large-interior} below).}, it is sufficient to show that the orbits of $G$ have non empty interior  
 (and so are open)  in  $\{Q(u)=c\}$. 
 
 \smallskip
 
 Let $u_0$ be a vector such that $Q(u_0)=c$. 
 \begin{lemma}\label{l.large-interior}
 If either $Q(Gu_0,u_1)$ or $Q(Gu_0, u_2)$ have non empty interior in $\Cset$, then $Gu_0$ has non empty interior  
  in  $\{Q(u)=c\}$. 
  \end{lemma}
  \begin{proof}
  Recall the following 
  
 \hspace{2cm}   {\bf Fact}:  for $p+q \ge 2$ and $b \ne 0$, $SU(p,q)$ acts transitively on \linebreak  $\{ \sum_1^p |z_i|^2 - \sum_{p+1}^{q+1} |z_i|^2 = b\} \subset \Cset^{p+q}$.
 \smallskip
 
 Assume for instance that   $Q(Gu_0, u_1)$ has non empty interior in $\Cset$. Let $W$ be a non-empty open set which is contained in $Q(Gu_0, u_1)$ and is disjoint from the  circle
 $\{|z|=Q(u_1)\}$. For any $w \in W$, the intersection of $\{Q(u)=Q(u_1)\}$ with $\{Q(u,u_1) = w \}$ consists of vectors of the form $u = \alpha u_1 +v$, 
 with $\alpha = \frac w{Q(u_1)}$ , $Q(v,u_1)=0$, $ Q(v) = (1- |\alpha|^2) Q(u_1)$.  As $|\alpha| \ne 1$, it follows from the fact recalled above that the intersection of
 $\{Q(u)=Q(u_1)\}$ with $\{Q(u,u_1) = w \}$ is contained in $Gu_0$.
  \end{proof}
  
\medskip
  
\begin{lemma}
Let $i \in \{1,2\}$. If  $Q(Gu_0,u_i)$  is not contained in the circle $\{|z| =c\}$, then $Q(Gu_0,u_{3-i})$ has non empty interior in $\Cset$.
\end{lemma} 
\begin{proof} 
We may assume for instance that $u_0 = \alpha u_1 +v_0$ with $|\alpha| \ne 1$,  $Q(v_0,u_1)=0$, $ Q(v_0) = (1- |\alpha|^2) Q(u_1) \ne 0$. Write $u_2 = \beta u_1 + v_2$ with 
$Q(v_2,u_1)=0$, $v_2 \ne 0$. For $g \in G_1$, we have 
$$ Q(g.u_0, u_2) = \alpha \bar{\beta} Q(u_1)+ Q(g.v_0, v_2) .$$
From the fact recalled above, the set $G_1 v_0$ consists of the vectors $v$ orthogonal to $u_1$ satisfying $Q(v)= (1- |\alpha|^2) Q(u_1) $. Any linear projection of this set on $\Cset$
has nonempty interior, which proves the assertion of the lemma. 
\end{proof}

\smallskip

We can now end the proof of  the proposition. In view of the two lemmas above, we know that $Gu_0$ has non empty interior in $\{Q(u)=c\}$ except perhaps if both $Q(Gu_0,u_1)$ 
and $Q(Gu_0, u_2)$ are contained in the circle $\{|z| =c\}$. We will now prove that this exceptional case is impossible. Write as before  $u_0 = \alpha u_1 +v_0$ ,
 $u_2 = \beta u_1 + v_2$, with  $Q(v_0,u_1)=  Q(v_2,u_1)=0$,  $v_2 \ne 0$. Exchanging $u_1, u_2$ if necessary, we may assume that $v_0 \ne 0$. We may also assume that $|\alpha | =1$, $Q(v_0) =0$. Choose a $2$-dimensional subspace $E$ of the hyperplane $H$ orthogonal to $u_1$ with the following properties:
 \begin{itemize}
 \item the subspace $E$ contains $v_0$.
 \item The restriction of $Q$ to $E$ is non degenerate.
 \item The orthogonal projection of $v_2$ on $E$ is $\ne 0$.
 \end{itemize}

Such a choice is possible because  the last two conditions are open and dense amongst $2$-dimensional subspaces of $H$ containing $v_0$. Choose a basis $e,f$ of $E$ such that
$v_0 = e+f$ and $Q(xe +yf) = |x|^2 -|y|^2$. For any $a,b \in \Cset$ with $  |a|^2 -|b|^2 =1$, we can find $g \in G_1$ such that
$$ g.e = ae + bf, \quad g.f =  \bar b e + \bar a f .$$
Therefore any vector of the form $z e + \bar z f $, with $z \in \Cset^*$ belongs to $G_1 v_0$.

\smallskip

Let $s e + t f \ne 0$ be the  orthogonal projection of $v_2$ on $E$. Then $z \bar s - \bar z \bar t $ belongs to $Q(G_1 v_0 , v_2)$ for any  $z \in \Cset^*$. As the set  $\{ z \bar s - \bar z \bar t , z \in \Cset^*\}$ contains a straight segment in $\Cset$, the set $Q(G_1u_0, u_2) = \alpha \bar {\beta} c + Q(G_1 v_0 , v_2)$ is not contained in the circle $\{|z| =c\}$. 

\smallskip

This concludes the proof of the proposition.
\end{proof}

\subsubsection{Application}

\begin{proposition}\label{propind1}
Assume that $d \ge 3$, $\rho ^d, \rho^{d+1} \ne 1$. Assume also that   the intersection of the group generated by the operators 
$L_q$,  $q \in \A_{d-1}$, on $\Cset^{\A_{d-1}}$ with the special unitary group $SU(Q'_{\alpha})$ is dense (resp. Zariski dense)  in  $SU(Q'_{\alpha})$ . 
Then  the intersection of the group generated by the operators  $L_p$, $p \in \Ad$,  on $\Cset^{\A_{d}}$ with the special unitary group $SU(Q_{\alpha})$ is dense 
(resp. Zariski dense)  in  $SU(Q_{\alpha})$ .
\end{proposition}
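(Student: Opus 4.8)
The plan is to deduce the statement from Proposition~\ref{stabilizers}, applied to the stabilizers of two suitably chosen vectors. Write $\Gamma_d$ for the group generated by the operators $L_p$, $p\in\Ad$, on $\CAD$, and $\Gamma_{d-1}$ for the group generated by the $L_q$, $q\in\A_{d-1}$, on $\Cset^{\A_{d-1}}$. Since $\rho^{d+1}\ne 1$, the lines $H'_p$ are well defined and $\CAD=\bigoplus_{p\in\Ad}H'_p$; I would fix two distinct indices $p_1\ne p_2$ and let $u_i$ be a generator of $H'_{p_i}$. These two vectors are linearly independent (they lie in distinct summands of the direct sum), and each has nonzero $\Qa$-norm because $H'_{p_i}=H_{p_i}^{\perp}$ inherits a non-degenerate form. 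The goal is to show that $\Gamma_d\cap SU(\Qa)$ approximates, in the relevant topology, each stabilizer $G_i:=\mathrm{Stab}_{SU(\Qa)}(u_i)$, and then to invoke Proposition~\ref{stabilizers}.

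First I would identify $G_i$ with $SU(\QA)$. Because $\rho^d\ne1$, the restriction of $\Qa$ to $H_{p_i}$ is non-degenerate, so $\CAD=H_{p_i}\oplus H'_{p_i}$ and $(H'_{p_i})^{\perp}=H_{p_i}$. An element $g\in SU(\Qa)$ fixing $u_i$ then preserves $H_{p_i}$, acts trivially on the line $H'_{p_i}$, and hence has $\det(g|_{H_{p_i}})=\det g=1$; conversely every element of $SU(\Qa|_{H_{p_i}})$ extends by the identity on $H'_{p_i}$. Thus $G_i\cong SU(\Qa|_{H_{p_i}})$, which by Lemma~\ref{restrict} is identified through $\iota_{p_i}$ with $SU(\QA)$. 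On the other hand, the intertwining relation $\iota_{p}\circ L_q=L_{\iota_{p}(q)}\circ\iota_{p}$ shows that the subgroup $\langle L_q:q\ne p_i\rangle$ preserves $H_{p_i}$, fixes $H'_{p_i}$ pointwise, and restricts through $\iota_{p_i}$ to $\Gamma_{d-1}$ on $\Cset^{\A_{d-1}}$. Since such elements have determinant on $H_{p_i}$ equal to their determinant on $\CAD$, one gets $\langle L_q:q\ne p_i\rangle\cap SU(\Qa)\cong\Gamma_{d-1}\cap SU(\QA)$ under this restriction. By the induction hypothesis the latter is dense (resp.\ Zariski dense) in $SU(\QA)$, so $\Gamma_d\cap G_i$ is dense (resp.\ Zariski dense) in $G_i$.

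It remains to arrange the norm condition of Proposition~\ref{stabilizers}. The norms $\Qa(u_p)$, $p\in\Ad$, are $d\ge3$ nonzero real numbers, so by pigeonhole at least two of them share a sign; choosing $p_1,p_2$ accordingly and rescaling $u_1,u_2$ I may assume $\Qa(u_1)=\Qa(u_2)\ne0$. Proposition~\ref{stabilizers} then says that $G_1\cup G_2$ generates a subgroup whose closure, in the usual topology, is $SU(\Qa)$. In the density case, the closure of $\Gamma_d\cap SU(\Qa)$ is a closed subgroup containing both $G_1$ and $G_2$ (as closures of the dense subsets $\Gamma_d\cap G_i$), hence contains the group they generate, hence equals $SU(\Qa)$. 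In the Zariski case, the Zariski closure of $\Gamma_d\cap SU(\Qa)$ contains $G_1$ and $G_2$; since a usual-topology closure is contained in the Zariski closure, Proposition~\ref{stabilizers} also shows $G_1,G_2$ Zariski generate $SU(\Qa)$, and therefore the Zariski closure of $\Gamma_d\cap SU(\Qa)$ is all of $SU(\Qa)$.

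The main obstacle is the bookkeeping of the middle step: one must check that intersecting with the \emph{special} unitary group is compatible with restriction to $H_{p_i}$ (this is precisely the determinant computation, which works because $H'_{p_i}$ is fixed pointwise), and that the two hypotheses $\rho^d\ne1$ and $\rho^{d+1}\ne1$ are used in the right places — the former to make $\Qa|_{H_{p_i}}$ non-degenerate so that the stabilizer is genuinely $SU(\QA)$, the latter to guarantee the lines $H'_p$ exist, are independent, and span. Everything else is formal once Proposition~\ref{stabilizers} is granted.
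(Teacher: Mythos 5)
Your proposal is correct and follows essentially the same route as the paper: choose generators $u_1,u_2$ of two of the orthogonal lines $H'_{p_1},H'_{p_2}$ with $Q_{\alpha}(u_1)=Q_{\alpha}(u_2)\neq 0$, use the induction hypothesis (via Lemma \ref{restrict} and the pointwise fixing of $H'_{p_i}$) to get density of the relevant subgroup in each stabilizer $G_i$, and conclude with Proposition \ref{stabilizers}. The only cosmetic differences are that you justify the equal-norm normalization by pigeonhole on signs rather than by the paper's observation that both restrictions $Q_\alpha|_{H_{p_i}}$ have the same signature, and that you spell out the determinant bookkeeping identifying $G_i$ with $SU(Q'_\alpha)$, which the paper leaves implicit.
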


\begin{proof}
Let $p_1, p_2$ be two distinct elements of $\Ad$. Denote by $u_1, u_2$ generators of $H'_{p_1}, H'_{p_2}$ respectively, satisfying $Q_{\alpha}(u_1) = Q_{\alpha}(u_2)\ne 0$. Such a choice is possible because the restrictions of $Q_{\alpha}$ to  $H_{p_1}, H_{p_2}$ have the same signature by lemma \ref{restrict}. By the assumption of the proposition, for $i \in \{1,2\}$, the intersection of the group generated by the operators  $L_p$, $p \in \Ad$, $p\ne p_i$,  with the special unitary group $SU(Q_{\alpha})$ is dense (resp. Zariski dense)
in the stabilizer $G_i$ of $u_i$ in $SU(Q_{\alpha})$. By Proposition \ref{stabilizers}, the smallest closed group containing $G_1 \cup G_2$ is $SU(Q_{\alpha})$. We thus obtain the conclusion of the proposition .
\end{proof}

\subsection{The induction step in the non exceptional degenerate case}\label{ssdegnonexc}

\subsubsection{The setting}

We assume in this subsection that $d \geq 4$ and that $(d+1) \alpha$ is an integer. Therefore the hermitian form $Q_{\alpha}$ 
on $\Cset^{\Ad}$ is degenerate. As $0<\alpha < \frac 12$, $d \alpha$ is not an integer and the hermitian form $Q'_{\alpha}$ on $\Cset^{\A_{d-1}}$ is non-degenerate. As the restriction of $Q_{\alpha}$ to each hyperplane $H_p$ is isomorphic to $Q'_{\alpha}$ (Lemma \ref{restrict}), the kernel of $Q_{\alpha}$ has dimension $1$.

\begin{remark}
In the case $d=3$, we must have $\alpha = \frac 14$; then the induction hypothesis (see below) is not satisfied.
\end{remark} 

\par
As $Q_{\alpha}$ is invariant under each $L_p$ the kernel of $\Qa$ is invariant under the $L_p$. 
But the eigenvalues of $L_p$ are $1$ with multiplicity $(d-1)$ and $-\zeta$ with multiplicity $1$, 
and the eigenvector associated to the eigenvalue $-\zeta$ is $e_p$, which is not an eigenvector of $L_q$
 for $q \ne p$. We conclude that the kernel of $\Qa$ is pointwise fixed by each $L_p$.

\par
We make the following {\bf  induction hypothesis}: on $\Cset^{\A_{d-1}}$, the intersection of the subgroup generated by the $L_p$, $p \in \A_{d-1}$ with the special unitary group $SU(\QA)$ is  dense for the ordinary topology
(resp. Zariski dense) in $SU(\QA)$.
 
\subsubsection{The induction step}

As the restriction of $\Qa$ to each $H_p$ is non-degenerate, the kernel $\Cset e$ of $\Qa$ is not contained in any $H_p$. 

\par
Let us denote by $SU^*(\Qa)$ the subgroup of $GL(\Cset^{\Ad})$ formed by linear automorphisms which preserve $\Qa$, fix $e$ (and not simply the line $\Cset e$) and have determinant $1$. If one writes these
automorphisms in the basis $(e, e_{3-d}, \ldots, e_{d-1})$ (using that $\Cset^{\Ad} = \Cset e \oplus H_{1-d}$),
the matrix takes a block triangular form
$$ M =  \left ( \begin{array}{cc}1 & v \\ 0 & g \end{array} \right  ),$$
with $g \in SU(\QA)$. Let $D$ be the subgroup of $SU^*(\Qa)$ formed of automorphisms whose matrix in the selected basis satisfies $v=0$.

\begin{proposition}\label{nointermediatesubgroup}
A  subgroup of $SU^*(\Qa)$ which contains $D$ is equal to $D$ or to $SU^*(\Qa)$.
\end{proposition}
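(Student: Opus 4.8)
The plan is to identify $SU^*(\Qa)$ with a semidirect product and thereby reduce the statement to an irreducibility property of the standard representation of $SU(\QA)$. First I would look more closely at the block form $M=\bigl(\begin{smallmatrix}1&v\\0&g\end{smallmatrix}\bigr)$ in the basis $(e,e_{3-d},\dots,e_{d-1})$. Since $e$ spans the radical of $\Qa$ and is fixed by $M$, preservation of $\Qa$ imposes \emph{no} constraint on the functional $v$, while it forces $g\in U(\QA)$; imposing $\det M=\det g=1$ then shows that $g$ ranges exactly over $SU(\QA)$ and $v$ over all of $(\Cset^{d-1})^*$. Thus $SU^*(\Qa)=N\rtimes D$, where $N=\{M:g=\mathrm{id}\}$ is abelian and isomorphic to $(\Cset^{d-1},+)$, and $D$ acts on $N$ by $v\mapsto vg^{-1}$, i.e. by the contragredient of the standard representation.

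The reduction is then the observation that any subgroup $H\supseteq D$ satisfies $H=(H\cap N)\cdot D$: given $h=(v,g)\in H$, the element $(0,g)\in D$ lies in $H$, so $h(0,g)^{-1}=(vg^{-1},\mathrm{id})\in H\cap N$. Hence subgroups of $SU^*(\Qa)$ containing $D$ correspond bijectively to $D$-invariant additive subgroups $A:=H\cap N$ of $N$, and the proposition becomes equivalent to the assertion that the only such $A$ are $\{0\}$ and $N$ (corresponding to $H=D$ and $H=SU^*(\Qa)$).

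To prove this, assume $A\ne\{0\}$ and fix $v_0\in A\setminus\{0\}$. The standing constraints ($0<\alpha<\tfrac12$ and $d\ge 4$) force $\QA$ to be indefinite with both indices positive, so $SU(\QA)=SU(p,q)$ is noncompact and $\mathfrak{su}(p,q)$ contains a square-zero nilpotent $X$ (for instance coming from an $SU(1,1)$ sitting in a hyperbolic plane). For such $X$ one has $\exp(tX)=\mathrm{id}+tX$ for all $t\in\Rset$; since $\exp(tX)v\in A$ and $A$ is a group, subtracting $v$ gives $\Rset\,Xv\subseteq A$ for every $v\in A$. As $A\supseteq Dv_0$ and $X$ cannot annihilate the whole orbit $Dv_0$ (whose $\Cset$-span is all of $\Cset^{d-1}$, since $D$ is Zariski dense in $SL(d-1,\Cset)$ and hence acts complex-irreducibly), there is $v_1\in Dv_0\subseteq A$ with $w:=Xv_1\ne 0$, so $A$ contains the nonzero real line $\Rset w$. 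Consequently $A$ contains $\Rset(gw)$ for every $g\in D$, hence the real-linear span $W$ of the orbit $Dw$, a nonzero $D$-invariant $\Rset$-subspace of $\Cset^{d-1}$. The decisive point is that $\Cset^{d-1}$ is irreducible as a \emph{real} $D$-module: a complex-irreducible representation is real-irreducible precisely when it is not isomorphic to its complex conjugate, and the representation on $N$ fails to be self-conjugate exactly when the standard representation of $SL(d-1,\Cset)$ is not self-dual, i.e. when $d-1\ge 3$. Therefore $W=\Cset^{d-1}$, so $A=N$ and $H=SU^*(\Qa)$.

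I expect the real-irreducibility step to be the main obstacle, and it is precisely where the hypothesis $d\ge 4$ is used: for $d=3$ the form $\QA$ has rank $2$, the standard representation is self-dual, and $\Cset^{2}$ splits as a real $SU(1,1)$-module, so proper nontrivial $D$-invariant subgroups exist and the proposition genuinely fails --- consistent with the Remark noting that the induction hypothesis is unavailable when $d=3$. A secondary technical point requiring care is the production of the real line $\Rset w$, which relies both on the noncompactness of $SU(\QA)$ (to guarantee square-zero nilpotents) and on the complex-irreducibility of the span of the orbit $Dv_0$.
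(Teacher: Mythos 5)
Your structural reduction coincides exactly with the paper's: you write $SU^*(\Qa)=N\rtimes D$ with $N\cong(\Cset^{\A_{d-1}})^*$ abelian, observe that any subgroup $H\supseteq D$ satisfies $H=(H\cap N)\cdot D$, and thereby reduce the proposition to the claim that the only $D$-invariant additive subgroups of $N$ are $\{0\}$ and $N$. The paper establishes this claim by a lemma valid for \emph{every} non-degenerate hermitian form $Q$ on $\Cset^N$, $N\geq 3$: by Witt's theorem the $SU(Q)$-orbits are the level sets of $Q$, and the map $v\mapsto Q(v-v_0)$ restricted to the orbit of a nonzero $v_0$ is non-constant, so an invariant additive subgroup containing $v_0$ has non-empty interior and is everything. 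This argument makes no distinction between definite and indefinite forms.

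Your proof of the key claim, by contrast, has a genuine gap precisely at that distinction. You assert that the standing constraints ($0<\alpha<\tfrac 12$, $d\geq 4$) force $\QA$ to be indefinite, and your device for producing a real line in $A$ --- a square-zero nilpotent $X\in\mathfrak{su}(\QA)$ with $\exp(tX)=1+tX$ --- exists only when $SU(\QA)$ is noncompact. But the setting of this subsection is $(d+1)\alpha\in\Zset$, and by the diagonalisation of Subsection \ref{ssdiag} the number of negative directions of $\Qa$ is the number of $s$ with $0<s<\ell\alpha$ (the sign of $1-\tan(\pi\alpha)\cot(\pi s/\ell)$ is that of $\sin(\pi(s/\ell-\alpha))$). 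Hence when $(d+1)\alpha=1$, i.e. $\alpha=1/(d+1)$ --- for instance $d=4$, $\alpha=1/5$, or $d=6$, $\alpha=1/7$, both legitimate non-exceptional degenerate cases, and in fact the \emph{first} degenerate case met in the induction whenever $\alpha=1/b$ --- the form $\QA$ is definite, $SU(\QA)\cong SU(d-1)$ is compact, its Lie algebra contains no nonzero nilpotent, and no element of $D$ acts unipotently; your mechanism then produces no real line at all, so the proof collapses exactly where the proposition is still needed. To close the gap you must handle the definite case, e.g. by the paper's orbit argument: the $SU(\QA)$-orbit of $v_0\neq 0$ is the full sphere $\{Q=Q(v_0)\}$, and the differences $v-v_0$, $v$ in this orbit, realize a non-trivial interval of values of $Q$, forcing $A$ to have interior. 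Two lesser inaccuracies: a self-conjugate complex-irreducible representation can still be real-irreducible (quaternionic type), so your criterion should be only the one-way implication you actually use; and your description of $d=3$ is off --- there $\alpha=1/4$, $\QA$ is \emph{definite}, $SU(\QA)\cong SU(2)$ (not $SU(1,1)$), its standard representation is real-irreducible, and what fails at $d=3$ is the induction hypothesis, not the group-theoretic mechanism.
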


\begin{proof}
Let $D'$ be such a subgroup . We identify $H_{1-d}$ to $\Cset^{\A_{d-1}}$ through $\iota_{1-d}$. Let 
$$ V(D'):= \{ v \in (\Cset^{\A_{d-1}})^*,  \left ( \begin{array}{cc}1 & v \\ 0 & 1 \end{array} \right  ) 
\in D' \}.$$
Clearly $V(D')$ is an additive subgroup of $(\Cset^{\A_{d-1}})^*$. Moreover, as 

$$  \left ( \begin{array}{cc}1 & 0 \\ 0 & g^{-1} \end{array} \right  )  \left ( \begin{array}{cc}1 & v \\ 0 & 1 \end{array} \right  )  \left ( \begin{array}{cc}1 & 0 \\ 0 & g \end{array} \right  ) = \left ( \begin{array}{cc}1 & v.g \\ 0 & 1 \end{array} \right  ),$$

$V(D')$ is invariant under the natural action of $SU(\QA)$ on  $(\Cset^{\A_{d-1}})^*$. In view of the lemma below, $V(D')$ must be equal to $\{0\}$ or $(\Cset^{\A_{d-1}})^*$, which corresponds to $D'=D$ and $D' = SU^*(\Qa)*$.
\end{proof}

\begin{lemma}
Let $Q$ be a non-degenerate hermitian form on $\Cset^N$, $N \geq 3$. The only additive subgroups of $\Cset^N$ which are invariant under $SU(Q)$ are $\{0\}$ and $\Cset^N$.
\end{lemma}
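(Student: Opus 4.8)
The plan is to show that a nonzero $SU(Q)$-invariant additive subgroup $V\subseteq\Cset^N$ must contain an open neighborhood of the origin. Since an additive subgroup of $(\Cset^N,+)$ containing a neighborhood of $0$ is open, and an open subgroup of the connected group $(\Cset^N,+)$ is the whole group, this forces $V=\Cset^N$. The one structural fact I will use repeatedly is that $SU(Q)$ acts transitively on each nonempty level set $\{Q(u)=c\}$ with $c\neq 0$: for the indefinite signatures this is exactly the Fact recalled in the proof of Lemma~\ref{l.large-interior}, and for a definite $Q$ it is the classical transitivity of $SU(N)$ on spheres (valid for $N\ge 2$).

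First I would reduce to the case of a non-isotropic vector. Pick $v\in V$, $v\neq 0$. The $\Cset$-span of the orbit $SU(Q)\cdot v$ is a nonzero $SU(Q)$-invariant complex subspace, hence all of $\Cset^N$ by irreducibility of the standard representation; in particular there is $g\in SU(Q)$ with $Q(v,gv)\neq 0$. If $Q(v)\neq 0$ we are already in the desired situation. Otherwise $v$ is isotropic, and so is $gv$ (as $g$ preserves $Q$); for a scalar $\omega$ with $\omega^N=1$ the matrix $\omega\,\mathrm{Id}$ lies in $SU(Q)$, so $\omega g v\in V$ and
\[
Q(v+\omega g v)=2\,\Re\!\big(\bar\omega\,Q(v,gv)\big).
\]
Because $N\ge 3$ the $N$-th roots of unity do not all lie on a single real line, so some admissible $\omega$ makes the right-hand side nonzero; then $v+\omega g v\in V$ is a non-isotropic, nonzero element. (This is the only place where the hypothesis $N\ge 3$ is used.) Replacing $v$ by this element, we may assume $Q(v)=c\neq 0$.

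By transitivity, the whole level set $\Sigma_c:=\{u:Q(u)=c\}$ equals the orbit $SU(Q)\cdot v$ and is therefore contained in $V$; moreover $\Sigma_c$ is a smooth real hypersurface, since $d(Q(u))_u(w)=2\,\Re Q(w,u)$ is nonzero for $u\neq 0$. As $V$ is a group, the map
\[
\Phi:\Sigma_c^{\,4}\longrightarrow\Cset^N,\qquad \Phi(u_1,u_2,u_3,u_4)=u_1-u_2+u_3-u_4,
\]
takes values in $V$. The tangent space to $\Sigma_c$ at $u$ is the real hyperplane $\ker\big(\Re Q(\cdot,u)\big)$, and two such hyperplanes coincide iff $u,u'$ are real-proportional; choosing $a\in\Sigma_c$ and $a'=\omega a$ with $\omega$ a non-real unit scalar (so $a'\in\Sigma_c$ and $a'\notin\Rset a$) gives $T_a\Sigma_c\neq T_{a'}\Sigma_c$, whence $T_a\Sigma_c+T_{a'}\Sigma_c=\Cset^N$. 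Evaluating the differential of $\Phi$ at the base point $(a,a',a',a)$, where $\Phi=0$, yields the map $(w_1,w_2,w_3,w_4)\mapsto w_1-w_2+w_3-w_4$ onto $T_a\Sigma_c+T_{a'}\Sigma_c=\Cset^N$. Hence $\Phi$ is a submersion there and its image, a subset of $V$, contains an open neighborhood of $0$; thus $V$ is open and $V=\Cset^N$.

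The main obstacle is precisely this last step: the usual invariant-subspace or closure arguments only show that a nonzero $V$ is \emph{dense}, whereas we need the exact equality $V=\Cset^N$, and upgrading density to equality requires producing genuinely open subsets of $V$. This is what the submersion argument achieves, and it rests entirely on the transitivity of $SU(Q)$ on the non-null level sets (so that $V$ contains a full hypersurface) together with the elementary observation that the sum of two distinct real tangent hyperplanes of that hypersurface is all of $\Cset^N$.
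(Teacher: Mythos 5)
Your proof is correct, but it reaches the key point — that a nonzero invariant additive subgroup $V$ has non-empty interior — by a genuinely different mechanism than the paper. The paper first upgrades Witt's theorem to a classification of $SU(Q)$-orbits: for $N\geq 3$ every orbit contains a vector with a vanishing coordinate, so the stabilizer in $U(Q)$ surjects onto the unit circle under the determinant and hence $U(Q)$- and $SU(Q)$-orbits coincide; in particular the punctured isotropic cone $\{v\ne 0,\,Q(v)=0\}$ is a single orbit, so no reduction to non-isotropic vectors is needed (this orbit classification is where the paper uses $N\geq3$, whereas you use it only in your roots-of-unity trick). Then, instead of your submersion argument, the paper translates: $V$ contains $V_0-v_0$, where $V_0$ is the orbit of $v_0\in V$, and the function $v\mapsto Q(v-v_0)=2Q(v_0)-2\Re Q(v,v_0)$ is non-constant on $V_0$, so its image contains a non-trivial interval; by invariance $V$ then contains the entire level set $\{Q=c\}$ for every non-zero $c$ in that interval, and this union of level sets has non-empty interior. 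Your route needs less orbit information — only transitivity of $SU(Q)$ on non-null level sets, i.e.\ the Fact already recalled in the proof of Lemma \ref{l.large-interior}, plus irreducibility of the standard representation — and your transversality computation for $\Phi(u_1,u_2,u_3,u_4)=u_1-u_2+u_3-u_4$ on $\Sigma_c^{\,4}$ makes the openness step completely explicit, at the cost of the extra reduction step for isotropic vectors; the paper's version is shorter once its orbit lemma is in place and handles isotropic $v_0$ with no special treatment. The final step (an additive subgroup with interior is all of $\Cset^N$) is the same in both arguments.
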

\begin{proof}
We first observe that , as $N \geq 3$, the orbit $SU(Q).v_0$ of a vector $v_0$ is equal to
\begin{itemize}
\item $\{0\}$ if $v_0 =0$;
\item $\{ v\ne 0 , Q(v) =0\}$ if $v_0 \ne 0$, $Q(v_0) =0$;
\item $\{Q(v)=c\}$ if $Q(v_0) = c \ne 0$.
\end{itemize}
Indeed, Witt's theorem implies that the orbit $U(Q).v_0$ is as stated. As $N\geq 3$, there exists
a vector $v_1$ in this orbit with at least one coordinate vanishing 
(in an orthogonal basis for $Q$) . But then the image of the stabilizer of $v_1$ (in $U(Q)$) by the determinant map is the full unit circle. This means that the orbits  $U(Q).v_1$ and $SU(Q).v_1$ are equal.
\par
Let $V$ be an additive subgroup of $\Cset^N$ which is also $SU(Q)$-invariant. It is sufficient to show that , if $V$ contains a non-zero vector, then $V$ has non-empty interior. Let $v_0 \in V$,
$v_0 \ne 0$. Then $V$ contains $V_0:=\{ v\ne 0, Q(v) = Q(v_0)  \}$. The set $V_0 -v_0$ is also contained in $V$. The map 
$$ v \to Q(v-v_0) = 2 Q(v_0) -2 \Re Q(v,v_0)$$
is not constant in a neighborhood of $v_0$ in $V_0$, hence its image contains a non-trivial interval. This implies that $V$ has non-empty interior.
\end{proof}

\begin{corollary}\label{cordegnonexcept}
Under the induction hypothesis stated above, the intersection of the subgroup generated by the $L_p$ with $SU(\Qa)$ is dense (resp. Zariski dense) in $SU^*(\Qa)$.
\end{corollary}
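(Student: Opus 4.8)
The plan is to identify the closure $\overline{\Gamma}_1$ of $\Gamma_1:=\Gamma\cap SU(\Qa)$, where $\Gamma:=\langle L_p:p\in\Ad\rangle$, as a closed subgroup wedged between $D$ and $SU^*(\Qa)$, and then to invoke Proposition \ref{nointermediatesubgroup} to pin it down exactly. First I would record that, as noted just before the induction hypothesis, every $L_p$ fixes the generator $e$ of $\ker\Qa$ pointwise and preserves $\Qa$; hence for $\gamma\in\Gamma$ one has $\gamma\in SU(\Qa)$ if and only if $\det\gamma=1$ if and only if $\gamma\in SU^*(\Qa)$, so $\Gamma_1=\Gamma\cap SU^*(\Qa)$. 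Its closure in the usual (resp. Zariski) topology is then a subgroup of $SU^*(\Qa)$, and by Proposition \ref{nointermediatesubgroup} it is enough to prove the two inclusions $\overline{\Gamma}_1\supseteq D$ and $\overline{\Gamma}_1\neq D$.

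For $\overline{\Gamma}_1\supseteq D$, I would work with the subgroup $\Gamma^{\flat}$ generated by the $L_p$ with $p\neq 1-d$. By the restriction properties coming from the formulas of Subsection \ref{operators}, each such $L_p$ preserves $H_{1-d}$ and, via the conjugacy $\iota_{1-d}\circ L_q=L_{\iota_{1-d}(q)}\circ\iota_{1-d}$, restricts on $H_{1-d}\cong\Cset^{\A_{d-1}}$ to the operator $L_q$ of the $(d-1)$-dimensional situation; since $L_p$ also fixes $e$, in the basis $(e,e_{3-d},\dots,e_{d-1})$ it is block-diagonal $\bigl(\begin{smallmatrix}1&0\\0&g\end{smallmatrix}\bigr)$ with $g$ preserving $\QA$ (Lemma \ref{restrict}). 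Restriction to $H_{1-d}$ thus identifies $\Gamma^{\flat}$ with the group generated by the $L_q$, $q\in\A_{d-1}$, and since $\det\bigl(\begin{smallmatrix}1&0\\0&g\end{smallmatrix}\bigr)=\det g$, it identifies $\Gamma^{\flat}\cap SU^*(\Qa)$ with its intersection with $SU(\QA)$. By the induction hypothesis this intersection is dense (resp. Zariski dense) in $SU(\QA)$, so its closure is all of $D$; as $\Gamma^{\flat}\cap SU^*(\Qa)\subseteq\Gamma_1$, this yields $D\subseteq\overline{\Gamma}_1$.

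For $\overline{\Gamma}_1\neq D$, I would produce a single element of $\Gamma_1$ outside $D$, i.e. one whose block form has nonzero upper part $v$, equivalently one not preserving $H_{1-d}$. From the formulas in Subsection \ref{operators}, $L_{1-d}$ sends $e_q\mapsto e_q-e_{1-d}$ for every $q\neq 1-d$, so $L_{1-d}(H_{1-d})\neq H_{1-d}$, whereas each $L_p$ with $p\neq 1-d$ preserves $H_{1-d}$. Hence for a fixed $p\neq 1-d$ the element $L_{1-d}L_p^{-1}$ does not preserve $H_{1-d}$; on the other hand it fixes $e$, preserves $\Qa$, and has determinant $(-\zeta)(-\zeta)^{-1}=1$, using $\det L_p=-\zeta$. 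Thus $L_{1-d}L_p^{-1}\in\Gamma_1\setminus D$, so $\overline{\Gamma}_1\neq D$, and Proposition \ref{nointermediatesubgroup} forces $\overline{\Gamma}_1=SU^*(\Qa)$, which is exactly the assertion.

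The main obstacle I anticipate is the bookkeeping of the second step: one must verify carefully that restriction to $H_{1-d}$ genuinely intertwines $\Gamma^{\flat}$ with the lower-dimensional group, so that the induction hypothesis transfers, and—crucially—that the passage to determinant-one elements matches on both sides, i.e. that $\Gamma^{\flat}\cap SU^*(\Qa)$ corresponds to the $SU(\QA)$-intersection and not merely to the $U(\QA)$-intersection (this is precisely where $\det\bigl(\begin{smallmatrix}1&0\\0&g\end{smallmatrix}\bigr)=\det g$ enters). The remaining steps are either immediate consequences of the explicit formulas of Subsection \ref{operators} or formal applications of Proposition \ref{nointermediatesubgroup}.
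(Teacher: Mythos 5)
Your proposal is correct and follows essentially the same route as the paper: restrict the subgroup generated by the $L_p$, $p\neq 1-d$, to $H_{1-d}$ to transfer the induction hypothesis and conclude the closure contains $D$, then exhibit a determinant-one element (the paper uses $L_{1-d}\circ L_{d-1}^{-1}$, you use $L_{1-d}L_p^{-1}$) that fails to preserve $H_{1-d}$, and finish with Proposition \ref{nointermediatesubgroup}. Your write-up merely makes explicit the determinant bookkeeping and the intertwining via $\iota_{1-d}$ that the paper leaves implicit.
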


\begin{proof}
Let $ G$ be the closure (resp. the Zariski closure) of the intersection of the subgroup generated by the $L_p$, $1-d \leq p \leq d-1$ with $SL(n,\Cset)$. We have seen earlier that $G$ is contained in $SU^*(\Qa)$.
\par

Let $G'$ be the closure (resp. the Zariski closure) of the intersection of the subgroup generated by the $L_p$, $3-d \leq p \leq d-1$ with $SL(n,\Cset)$. We have seen earlier that $G' \subset D$.
It follows from the induction hypothesis that $G' =D$. As $L_{1-d} \circ L_{d-1}^{-1}$ has determinant $1$ but does not preserve $H_{1-d}$, $G$ is not equal to $D$. It follows from the proposition that $G = SU^*(\Qa)$.  
\end{proof}

\subsection{From the degenerate case to the non-degenerate case}\label{ssdegnondeg}

\subsubsection{The setting}\label{ssSettingdegnondeg}
We assume in this subsection that $d\geq 5$ and that $d\alpha$ is an integer. Therefore the hermitian form $\Qa$ is non degenerate , but the restrictions of $\Qa$ to each hyperplane 
$H_p$, which are isomorphic to $\QA$,  are degenerate. It means that the $\Qa$-orthogonal of $H_p$ is a line $\Cset w_p$ contained in $H_p$. 
\par
We make the following {\bf  induction hypothesis}:  The intersection of the subgroup generated by the $L_p$, $p \in \A_{d-1}$ with $SL( \Cset^{\A_{d-1}})$ is  dense for the ordinary topology
(resp. Zariski dense) in the subgroup $SU^*(\QA)$ defined in the previous subsection.

\subsubsection{ Stabilizers }

Let $Q$ be a non-degenerate non-definite hermitian form on $\Cset^N$. 
Let $w$ be a nonzero vector such that $Q(w)=0$.
 Let $H$ be the hyperplane $Q$-orthogonal to $\Cset w$.
It contains $\Cset w$. Denote by $Q'$ the restriction of $Q$ to $H$, 
which is degenerate. Choose a vector $w'$ such that $Q(w,w') =1$. 
Denote by $H'$ the orthogonal of the plane $\Cset w \oplus \Cset w'$, and
by $Q''$ the restriction of $Q$ to $H'$, which is non-degenerate.  We have  direct sums

$$ \Cset^N = H \oplus\Cset w', \qquad H = \Cset w \oplus H' . $$


\par

Write ${\rm Stab}(w)$ for the stabilizer of $w$ in $SU(Q)$. The matrix of an element of 
${\rm Stab}(w)$ in the decomposition $\Cset w \oplus H' \oplus \Cset w'$ of $\Cset^N$ takes a block-triangular form
\begin{equation}\label{eqform}
  M=  \left ( \begin{array}{ccc}1 & v & t \\ 0 & g & h \\ 0 & 0 & \omega \end{array} \right  )
  \end{equation}

However, not all such matrices $M$ are associated to elements of $SU(Q)$. Define a one-dimensional subgroup
$$  K := \lbrace  \left ( \begin{array}{ccc}1 & 0 & t \\ 0 & 1 & 0 \\ 0 & 0 & 1 \end{array} \right  ),
\Re t =0\rbrace. $$

In the next proposition, the group $SU^*(Q')$ was defined in the last subsection.
 
\begin{proposition}\label{propnotsplit}
We have an exact sequence
$$ 1 \longrightarrow K  \longrightarrow {\rm Stab}(w)  \longrightarrow SU^*(Q')  
\longrightarrow 1$$
The homomorphism $\theta$ from   ${\rm Stab}(w)$ to $SU^*(Q')$ is induced by restriction to $H$. The exact sequence is {\bf not} split.
\end{proposition}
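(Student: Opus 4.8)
The plan is to carry out every computation in the block form \eqref{eqform} relative to $\Cset^N=\Cset w\oplus H'\oplus\Cset w'$, and to treat the three assertions (well-definedness of $\theta$, exactness, non-splitness) in turn. First I would justify that $\theta$ is the restriction to $H$. Any $M\in{\rm Stab}(w)$ fixes $w$ and preserves $Q$, hence preserves $H=w^\perp$, so its restriction to $H$ is a $Q'$-isometry of $H$ fixing $w$; in the notation of \eqref{eqform} this restriction is the block $\left(\begin{smallmatrix}1&v\\0&g\end{smallmatrix}\right)$. To see that it lands in $SU^*(Q')$ I only need $\det g=1$. Evaluating $Q(Mw,Mw')=Q(w,w')$ and using $Q(w,w)=0$, $Q(w,H')=0$, $Q(w,w')=1$ forces $\omega=1$; then $1=\det M=\omega\,\det g$ gives $\det g=1$, so $\theta(M)\in SU^*(Q')$.

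Next, for exactness I would read off the unitarity constraints by testing $Q$ on all pairs of basis blocks. The pairings involving $w$ are automatic, and $Q(Mh_1'',Mh_2'')=Q''(gh_1'',gh_2'')$ merely records $g\in U(Q'')$. The two substantive conditions are $Q(Mh'',Mw')=0$, which gives $Q''(gh'',h)=-v(h'')$ for all $h''\in H'$, and $Q(Mw',Mw')=0$, which gives $\Re t=-\tfrac12 Q''(h,h)$. Since $Q''$ is non-degenerate, the first relation determines $h$ uniquely from $(v,g)$ and the second determines $\Re t$, while $\Im t$ stays free. This simultaneously shows that $\theta$ is surjective (every $(v,g)\in SU^*(Q')$ lifts) and that $\ker\theta$, obtained by setting $v=0$, $g=\mathrm{id}$, forces $h=0$ and $\Re t=0$, i.e. coincides exactly with $K$. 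This establishes the exact sequence.

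The heart of the matter, and the step I expect to be the main obstacle, is non-splitness; I would approach it through the commutator pairing of the central extension. First I would check that $K$ is central in ${\rm Stab}(w)$: writing the generator of $K$ as $\exp(tE)$ with $E=w\,(w')^\ast$ the rank-one map sending $w'\mapsto w$, one verifies $MEM^{-1}=E$ for every $M$, because $Mw=w$ and, $\omega$ being $1$, the bottom row of $M^{-1}$ is again $(0,0,1)$, so $(w')^\ast\circ M^{-1}=(w')^\ast$. Thus the extension is central, and the commutator of lifts descends to a well-defined alternating biadditive pairing $B\colon V\times V\to K$ on the abelian unipotent subgroup $V=\{(v,\mathrm{id})\}\subset SU^*(Q')$ (abelian since the $v$-coordinates add). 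Any group homomorphism splitting $\theta$ would make $s(a)$ and $s(b)$ commute whenever $a,b\in V$ commute, hence would force $B\equiv 0$.

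Finally I would compute $B$ from the lifts above. Parametrizing $V$ by $h\mapsto v_h:=-Q''(\cdot,h)$, a bijection since $Q''$ is non-degenerate, a direct multiplication of two unipotent matrices shows that $[\,\tilde v_{h_1},\tilde v_{h_2}\,]$ is the element of $K$ whose parameter is proportional to $\Im Q''(h_1,h_2)$. As $Q''$ is a non-degenerate hermitian form on $H'$ with $\dim_\Cset H'=N-2\ge 1$ in the relevant range $d\ge 5$, its imaginary part is a non-degenerate real symplectic form, so some $h_1,h_2$ satisfy $\Im Q''(h_1,h_2)\ne 0$ and $B\not\equiv 0$; this contradicts the existence of a splitting. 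Equivalently, $\theta^{-1}(V)$ is a Heisenberg group with center $K=[\theta^{-1}(V),\theta^{-1}(V)]$, which cannot split over its abelianization $V$. The only delicate point is keeping track of signs and conjugations so that the pairing is correctly identified with $\Im Q''$; once that is done, non-degeneracy closes the argument.
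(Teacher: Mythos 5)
Your proof is correct and follows essentially the same route as the paper: the same block decomposition and unitarity constraints ($\omega=1$, $h$ determined semi-linearly by $v$, $\Re t=-\tfrac12 Q''(h,h)$), and the same mechanism for non-splitness, namely restricting a hypothetical section to the abelian unipotent subgroup and exploiting the group law on the $t$-coordinate. The only differences are cosmetic: you prove surjectivity by exhibiting explicit lifts where the paper invokes Witt's theorem, and you phrase the final contradiction as non-vanishing of the commutator pairing $2i\,\Im Q''(h_1,h_2)$ (a Heisenberg-group obstruction) where the paper deduces it from the $\Cset$-antilinearity of $v\mapsto h(v)$; these amount to the same computation.
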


\begin{proof}
If a matrix of the form (\ref{eqform}) is unitary, we must have $\omega =1$ because the  scalar product $Q(w,w')$ is preserved.

As $\omega \equiv 1$, the homomorphism $\theta$ takes values in 
$SU^*(Q')$. 
It is onto by Witt's theorem.

An elementary computation shows that the kernel of $\theta$ is equal to $K$ (see also below).

It remains to show that $\theta$ has no section.  Assume by contradiction that such a section $\sigma$ exists. Consider


$$ u(v):= \sigma \left( \begin{array}{cc} 1&v\\0&1 \end{array}\right) =  
\left( \begin{array}{ccc} 1&v&t(v)\\0&1 & h(v) \\ 0&0&1  \end{array} \right) . $$

As $u(v)$ is unitary, we have, for all $v \in (H')^*, x \in H'$

$$ Q(u(v)x,u(v)w') = Q(v(x)w +x,\, w' + h(v) + t(v) w) = v(x) + Q(x,h(v)) =0.$$



This determines $h$ as a semi-linear isomorphism from $(H')^*$ to $H'$.
On the other hand, as $\sigma$ is a homomorphism, we have
$$ t(v+v') = t(v) +t(v')  + v(h(v')),$$
for all $v,v' \in (H')^*$. Therefore $v(h(v'))$ is a symmetric function of $v, v'$.
As $h$ is $\mathbb{C}$-antilinear, we should have $v(h(v')) \equiv 0$. This is not true since $h$ is an isomorphism.

\end{proof}

\par
We now obtain in our particular setting:

\begin{corollary}\label{Zar1}
Let $p \in \A_d$. The intersection with $SU(\Qa)$ of the subgroup 
generated by the $L_q$, 
$q \ne p$, is Zariski dense  in the stabilizer ${\rm Stab}(w_p)$.
\end{corollary}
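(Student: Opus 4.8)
The plan is to play the induction hypothesis against the non-splitting of the exact sequence of Proposition \ref{propnotsplit}: the induction hypothesis will give surjectivity onto the base $SU^*(\QA)$, while Proposition \ref{propnotsplit} will be used to capture the one-dimensional fibre $K$. Throughout I write $\Gamma_p$ for the group generated by the $L_q$, $q\ne p$, intersected with $SU(\Qa)$, and I apply Proposition \ref{propnotsplit} to the isotropic vector $w=w_p$ (isotropic because $w_p\in H_p$ and $\Cset w_p=H_p^{\perp}\subseteq H_p$), with $H=H_p$, $Q'=\Qa|_{H_p}$, restriction map $\theta$, kernel $K$, and image $SU^*(\QA)$.

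First I would record that $\Gamma_p\subseteq {\rm Stab}(w_p)$. Each $L_q$ with $q\ne p$ preserves $H_p=\iota_p(\Cset^{\A_{d-1}})$ and, under $\iota_p$, acts as the operator $L_{q'}$ on $\Cset^{\A_{d-1}}$ with $\iota_p(q')=q$; by Lemma \ref{restrict} the restriction $\Qa|_{H_p}$ is carried to $\QA$, which is degenerate precisely because $d\alpha\in\Zset$, with kernel corresponding to $\Cset w_p$. Exactly as in Subsection \ref{ssdegnonexc} (the $-\zeta$-eigenline of each $L_{q'}$ is $\Cset e_{q'}$, which is not an eigenline of the others), the kernel of $\QA$ is pointwise fixed by all the $L_{q'}$; hence every $L_q$, $q\ne p$, fixes $w_p$ pointwise, so indeed $\Gamma_p\subseteq {\rm Stab}(w_p)$.

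Next I would identify the image under $\theta$. Since $\det L_q=\det(L_q|_{H_p})=-\zeta$ (the $-\zeta$-eigenvector $e_q$ lies in $H_p$), a word $g=L_{q_1}\cdots L_{q_n}$ lies in $SU(\Qa)$ if and only if $\theta(g)\in SL(\Cset^{\A_{d-1}})$; thus $\theta(\Gamma_p)$ is the group generated by the $L_{q'}$, $q'\in\A_{d-1}$, intersected with $SL(\Cset^{\A_{d-1}})$, which by the Zariski induction hypothesis is Zariski dense in $SU^*(\QA)$. Letting $G$ be the Zariski closure of $\Gamma_p$ in ${\rm Stab}(w_p)$ and using that the image of an algebraic group under a morphism is a closed subgroup, I conclude $\theta(G)=SU^*(\QA)$.

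It remains to show $K\subseteq G$, and this is the crux. The intersection $G\cap K$ is a Zariski-closed subgroup of $K=\{\,\Re t=0\,\}\cong\mathbb{G}_a$, so in characteristic zero it is either $\{1\}$ or all of $K$. If $G\cap K=\{1\}$, then $\theta|_G\colon G\to SU^*(\QA)$ would be injective and, by the previous step, surjective, hence an isomorphism of algebraic groups; its inverse would be a homomorphic section of $\theta$, contradicting the non-splitting asserted in Proposition \ref{propnotsplit}. Therefore $G\cap K=K$, and together with $\theta(G)=SU^*(\QA)$ and the exactness of the sequence this forces $G={\rm Stab}(w_p)$, which is the claim. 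The main obstacle is precisely this last step: there is no a priori reason for $\Gamma_p$ to meet the unipotent fibre $K$, and it is exactly here that the non-splitting of the sequence must be invoked; the rigidity of $\mathbb{G}_a$ (only the trivial or full Zariski-closed subgroup) is what makes the dichotomy clean, which is also why the conclusion is stated only for Zariski density — in the usual topology $G\cap K$ could a priori be a proper closed subgroup of $\Rset$, so the same one-line argument is unavailable.
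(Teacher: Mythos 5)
Your proof is correct and takes essentially the same route as the paper: containment of the group in ${\rm Stab}(w_p)$, surjectivity of the restriction homomorphism onto $SU^*(\QA)$ via the induction hypothesis, the dichotomy for the kernel as a Zariski-closed subgroup of the one-dimensional group $K$, and the non-splitting of the exact sequence of Proposition \ref{propnotsplit} to exclude the trivial-kernel case. Even your closing remark matches the paper, whose proof notes the same caveat that a subgroup of $K$ closed only in the usual topology could be an infinite discrete subgroup, which is why only Zariski density is claimed.
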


\begin{remark}
Probably one doesn't get the density in the usual topology, even if we started from this form of the induction hypothesis.
\end{remark}

\begin{proof}
We already know that $L_q(w_p) = w_p $ for $q \ne p$. Therefore the Zariski closure 
 $G_p$ of the intersection with $SU(\Qa)$ of the subgroup generated 
by the $L_q$, $q \ne p$ is a Zariski closed subgroup contained in 
${\rm Stab}(w_p)$.  On the other hand, the induction hypothesis 
(applied to the restriction of $\Qa$ to $H_p$, which is isomorphic to $\QA$) 
implies that restriction to $H_p$ induces an homomorphism of $G_p$ onto
$SU^*(\QA)$.

\par

The kernel of the homomorphism from $G_p$ onto $SU^*(\QA)$ is a Zariski closed subgroup of $K$ hence it is either equal to $K$ (in which case $G_p = {\rm Stab}(w_p)$) or to $\{1\}$. (If this subgroup is only closed for the usual topology, it could be an infinite  discrete subgroup of $K$). But the second case is impossible since the exact sequence of the proposition is not split.

\end{proof}

\subsubsection{More stabilizers}

Let $Q$ be a non-degenerate non-definite hermitian form on $\Cset^N$. 
Let $w$ be a nonzero vector such that $Q(w)=0$. Let ${\rm Stab}(w)$ be the stabilizer of $w$ in $SU(Q)$. From Witt's theorem, one gets

\begin{lemma}\label{trans1}
($N \geq 3$) For any $c \in \Cset$, $c\ne 0$, the group ${\rm Stab}(w)$ acts transitively on
$$N(w,c):= \{ u \in \Cset^N , Q(u) =0, Q(u,w) =c\}.$$
\end{lemma}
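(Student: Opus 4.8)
The plan is to use Witt's theorem to move one isotropic vector to another by a unitary transformation fixing $w$, and then to correct the determinant so that the resulting map lies in $SU(Q)$ rather than merely in $U(Q)$.

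First I would fix two vectors $u,u'\in N(w,c)$ and record the geometry of the plane each spans with $w$. Since $Q(w)=Q(u)=0$ while $Q(u,w)=c\neq 0$, the Gram matrix of $w,u$ is $\left(\begin{smallmatrix} 0 & \bar c\\ c & 0\end{smallmatrix}\right)$, of determinant $-|c|^2\neq 0$; hence $w,u$ are linearly independent and $\Cset w\oplus\Cset u$ is a non-degenerate (hyperbolic) plane, and likewise for $\Cset w\oplus\Cset u'$. The assignment $w\mapsto w$, $u\mapsto u'$ is then an isometry between these two $2$-planes, because both ordered bases have the same Gram matrix. By Witt's theorem this partial isometry extends to a global isometry $g\in U(Q)$ with $g(w)=w$ and $g(u)=u'$.

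Next I would adjust the determinant, which is the only point where anything beyond Witt's theorem is needed. The element $g$ need not have determinant $1$, so I consider the non-degenerate plane $P:=\Cset w\oplus\Cset u'$, giving $\Cset^N=P\oplus P^{\perp}$ with $Q|_{P^{\perp}}$ non-degenerate and $\dim P^{\perp}=N-2\geq 1$; this is exactly where the hypothesis $N\geq 3$ is used. Any $h\in U(Q)$ that is the identity on $P$ and an arbitrary unitary on $P^{\perp}$ fixes both $w$ and $u'$, and satisfies $\det h=\det(h|_{P^{\perp}})$. Since $U(Q|_{P^{\perp}})$ is the unitary group of a non-degenerate form on a space of positive dimension, its determinant map surjects onto the unit circle; as $g$ is unitary we have $|\det g|=1$, so I can choose such an $h$ with $\det h=(\det g)^{-1}$. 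Then $g':=hg$ satisfies $\det g'=1$, $g'(w)=w$ and $g'(u)=u'$, so $g'\in{\rm Stab}(w)$ carries $u$ to $u'$, proving transitivity.

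The main obstacle is precisely this determinant correction: Witt's theorem alone only produces an element of $U(Q)$, and the real content of the statement is staying inside $SU(Q)$. For $N=2$ one has $P^{\perp}=\{0\}$, the only element fixing $w$ and $u'$ is the identity, and there is no room to adjust; the assumption $N\geq 3$ is what provides a determinant-$e^{i\theta}$ modification fixing $w$ and $u'$ pointwise. (One could in addition verify $N(w,c)\neq\emptyset$ by taking a vector $w'$ with $Q(w',w)=c$ and correcting it by a suitable multiple of $w$ to make it isotropic, which amounts to solving a single real-linear equation; but transitivity is what is asserted, and the argument above handles it directly.)
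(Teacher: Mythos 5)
Your proof is correct and follows essentially the same route as the paper: Witt's theorem supplies an element of $U(Q)$ fixing $w$ and carrying $u$ to $u'$, and the determinant is then corrected by a unitary acting only on the non-degenerate orthogonal complement of the plane spanned by $w$ and the target vector, which is where $N\geq 3$ enters. The paper's proof is just a condensed version of this same argument.
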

\begin{proof} 
Let $u \in N(w,c)$. It is sufficient to see that the determinant of an element $g \in SU(Q)$ which stabilizes $w$ and $u$ can have any determinant of modulus one. This is clear since  the restriction of $Q$ to the orthogonal of $<u,w>$ is non-degenerate and the restriction of $g$ to this subspace  is any unitary matrix.
\end{proof}
\begin{lemma}\label{trans2}
Let $F$ be a nontrivial linear subspace of $\Cset^N$. Assume that the restriction of $Q$ to $F$ is non-degenerate.
\begin{enumerate} 
\item If the restriction of $Q$ to $F$ is indefinite, any translate $u+F$ intersects $\{Q=0\}$.
\item Assume that the restriction of $Q$ to $F$ is positive definite (resp. negative definite). Then a translate $u+F$, $u \in F^{\bot}$, intersects $\{Q=0\}$ iff $Q(u)\leq 0$
(resp. $Q(u)\geq 0$).
\end{enumerate}
\end{lemma}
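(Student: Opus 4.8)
The plan is to reduce everything to the orthogonal decomposition afforded by the non-degeneracy of $Q|_F$. Since the restriction of $Q$ to $F$ is non-degenerate, we have $\Cset^N = F \oplus F^{\bot}$, and every coset of $F$ has a unique representative in $F^{\bot}$; thus it suffices to treat translates $u+F$ with $u \in F^{\bot}$. This is exactly the normalization already built into the statement of part (2), and I would make it explicit in part (1) as well (replacing an arbitrary $u$ by its $F^{\bot}$-component, which does not change the coset). For such $u \in F^{\bot}$ and any $f \in F$, the cross terms $Q(u,f)$ and $Q(f,u)$ vanish, giving the basic identity
$$Q(u+f) = Q(u) + Q(f).$$
Hence $u+F$ meets $\{Q=0\}$ if and only if $-Q(u)$, which is a real number, lies in the range $\{Q(f) : f \in F\}$.

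The second ingredient is the description of that range in terms of the signature of $Q|_F$. Diagonalizing the hermitian form $Q|_F$, one sees that $\{Q(f) : f \in F\}$ equals $\Rset$ when $Q|_F$ is indefinite, equals $[0,\infty)$ when $Q|_F$ is positive definite, and equals $(-\infty,0]$ when $Q|_F$ is negative definite. Combining this with the identity above yields all three conclusions simultaneously: in the indefinite case $-Q(u) \in \Rset$ for every $u$, so the intersection is always non-empty; in the positive definite case the requirement $-Q(u) \geq 0$ reads $Q(u) \leq 0$; and in the negative definite case $-Q(u) \leq 0$ reads $Q(u) \geq 0$.

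I do not expect any genuine obstacle here: the entire content is the orthogonal splitting together with the elementary fact that a non-degenerate hermitian form represents precisely the real values permitted by its signature. The only points requiring a word of care are the reduction to $u \in F^{\bot}$ in part (1) and the observation that $Q(u)$ is real (so that comparing it against the real range of $Q|_F$ is meaningful); both are immediate from the hermitian property.
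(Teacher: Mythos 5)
Your proof is correct. For part (2) it coincides with the paper's argument: both use the splitting $\Cset^N = F \oplus F^{\bot}$ (valid because $Q|_F$ is non-degenerate), the identity $Q(u+f)=Q(u)+Q(f)$ for $u \in F^{\bot}$, $f \in F$, and the fact that a definite form on $F$ has range $[0,\infty)$, resp. $(-\infty,0]$. For part (1) you take a slightly different route: you first replace $u$ by its $F^{\bot}$-component, which kills the cross term and reduces everything to the observation that an indefinite non-degenerate form on $F$ has range all of $\Rset$. The paper instead works with an arbitrary $u$ and keeps the cross term: it notes that $f \mapsto Q(u+f) = Q(u)+Q(f)+2\Re Q(u,f)$ takes arbitrarily large positive and negative values on $F$ (the quadratic term dominates under scaling of $f$) and hence vanishes somewhere by continuity. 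Your version is more unified --- a single computation handles all three cases and yields the exact criterion for when $u+F$ meets $\{Q=0\}$ --- while the paper's intermediate-value argument for (1) never needs the direct-sum decomposition at all. Both rest on the same elementary facts, and neither has a gap.
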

\begin{proof}
\begin{enumerate}
\item The real-valued function $f \mapsto Q (u+f)= Q(u) + Q(f)  + 2\Re Q(u,f) $ takes on $F$ arbitrarily large positive and negative values, hence must vanish somewhere.
\item One has now $Q(u+f) = Q(u) + Q(f) \geq Q(u)$; the conclusion follows.
\end{enumerate}
\end{proof}
\begin{lemma}\label{trans3}
Assume $N=2$. Let $w, w'$ be a basis of $ \Cset^2$, such that $Q(w) = 0$. 
Then $z \to Q(w + zw')$ takes positive and negative values in any neighbothood of $0$.
\end{lemma}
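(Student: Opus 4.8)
The plan is to compute $Q(w+zw')$ explicitly and read off the two signs from its linear part. Expanding the associated sesquilinear form and using hermiticity together with the hypothesis $Q(w)=0$, one obtains
$$Q(w+zw') = z\,Q(w',w) + \bar z\,Q(w,w') + |z|^2\,Q(w') = 2\,\Re\!\big(z\,c\big) + |z|^2\,Q(w'),$$
where $c:=Q(w',w)$ and we used $Q(w,w')=\overline{Q(w',w)}$. Thus the question reduces to comparing the $\Rset$-linear functional $z\mapsto 2\Re(zc)$ near $0$ with the quadratic remainder $|z|^2Q(w')$.

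The key step is to observe that $c\ne 0$. Indeed, $w$ and $w'$ form a basis of $\Cset^2$; if we had both $Q(w,w)=0$ and $Q(w',w)=0$, then $w$ would be $Q$-orthogonal to all of $\Cset^2$, contradicting the non-degeneracy of $Q$. This is the only place where the hypotheses on $Q$ are used, and it is the crux of the argument.

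Once $c\ne 0$, the map $z\mapsto 2\Re(zc)$ is a nonzero $\Rset$-linear functional on $\Cset\cong\Rset^2$, so I would exploit its sign changes directly. Writing $z=te^{i\theta}$ with $t>0$ small gives $Q(w+zw')=2t\,\Re(e^{i\theta}c)+t^2Q(w')$. Choosing $\theta=-\arg c$ yields $\Re(e^{i\theta}c)=|c|>0$, so the expression equals $2t|c|+t^2Q(w')>0$ once $t$ is small enough; choosing $\theta=\pi-\arg c$ yields $-2t|c|+t^2Q(w')<0$ for $t$ small. Since both choices produce points arbitrarily close to $z=0$, the function $z\mapsto Q(w+zw')$ takes positive and negative values in every neighborhood of $0$.

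There is no serious obstacle here: the only point to verify is that the linear term dominates the quadratic one for small $t$, which is immediate from $|c|>0$. I expect this to be the shortest of the three transitivity lemmas in the subsection, its entire content being the observation that an isotropic basis vector of a non-degenerate form cannot be orthogonal to its basis complement.
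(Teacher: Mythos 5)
Your proof is correct and follows essentially the same route as the paper: expand $Q(w+zw') = 2\Re\bigl(z\,Q(w',w)\bigr) + |z|^2 Q(w')$, note that $Q(w',w)\ne 0$ by non-degeneracy of $Q$ together with $Q(w)=0$, and let the linear term dominate for small $|z|$. The only difference is that you spell out the choice of arguments $\theta = -\arg c$ and $\pi - \arg c$, which the paper leaves implicit.
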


\begin{proof}
Indeed, one has $Q(w+zw') = |z|^2 Q(w') + 2\Re (z Q(w',w))$ with $Q(w',w)\ne 0$ as $Q$ is non degenerate and $Q(w) =0$. 
\end{proof}

\begin{proposition}\label{stabdeg1}
Let $(w_1,\ldots,w_N)$ be a basis of $\Cset^N$ with $Q(w_i) =0$ for $1\leq i \leq N$, $<w_i,w_j> \ne 0$ for $1 \leq i  \ne j \leq N$. Let $G_i$ be the stabilizer of $w_i$ in $SU(Q)$. Then the smallest closed subgroup containing $G_1,\ldots,G_N$ is $SU(Q)$.
\end{proposition}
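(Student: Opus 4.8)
The plan is to set $G$ equal to the smallest closed subgroup of $SU(Q)$ containing $G_1,\dots,G_N$, and to prove the single statement that \emph{$G$ acts transitively on the isotropic cone} $\mathcal{C}:=\{u\in\Cset^N:\ u\neq 0,\ Q(u)=0\}$. Granting transitivity, the proposition follows exactly as in Proposition \ref{stabilizers}: by Witt's theorem $\mathcal{C}$ is a single $SU(Q)$-orbit, so for any $h\in SU(Q)$ transitivity of $G$ furnishes some $g\in G$ with $g\,w_1=h\,w_1$; then $g^{-1}h\in\mathrm{Stab}(w_1)=G_1\subseteq G$, whence $h=g\,(g^{-1}h)\in G$ and $G=SU(Q)$. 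Throughout I assume $N\geq 3$, which is the situation needed in Subsection \ref{ssdegnondeg}; the case $N=2$ is the base case, handled directly via Lemma \ref{trans3} (there the statement reduces to the fact that the stabilizers of two distinct isotropic directions generate $SU(1,1)\cong SL(2,\Rset)$). Since $Q$ is indefinite, $\mathcal{C}$ is connected (again by Witt), so it suffices to show that \emph{every} $G$-orbit in $\mathcal{C}$ is open.

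The engine for openness is Lemma \ref{trans1}. Whenever $Q(u_0,w_i)\neq 0$, that lemma identifies the orbit $G_i\,u_0$ with the whole fibre $N(w_i,Q(u_0,w_i))$, a smooth submanifold of $\mathcal{C}$ through $u_0$ with tangent space $\{v:\ \Re Q(v,u_0)=0,\ Q(v,w_i)=0\}$ (smoothness holds because $u_0\notin\Cset w_i$, as $Q(u_0,w_i)\neq 0$). I would fix $u_0\in\mathcal{C}$ and choose two indices $i\neq j$ with $Q(u_0,w_i),Q(u_0,w_j)\neq 0$ and $u_0\notin\mathrm{span}_{\Cset}(w_i,w_j)$. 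A short linear-algebra computation then shows that the two tangent spaces $T_{u_0}(G_i u_0)$ and $T_{u_0}(G_j u_0)$ already span $T_{u_0}\mathcal{C}=\{v:\ \Re Q(v,u_0)=0\}$: the only obstruction to spanning is that the functional $v\mapsto(Q(v,w_i),Q(v,w_j))$ fail to be onto $\Cset^2$ on $T_{u_0}\mathcal{C}$, and this happens precisely when $u_0\in\mathrm{span}_{\Cset}(w_i,w_j)$, which we have excluded. Consequently the map $(g_i,g_j)\mapsto g_i g_j\,u_0$ is a submersion at the identity, so $G_i G_j\,u_0\subseteq G\,u_0$ contains a neighbourhood of $u_0$ in $\mathcal{C}$, i.e. the orbit $G\,u_0$ is open at every such \emph{good} base point.

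To finish, I must check that every orbit contains a good base point. Given an arbitrary $u_0\in\mathcal{C}$, some coordinate $Q(u_0,w_i)\neq 0$ because $(w_i)$ is a basis; applying $G_i$ moves $u_0$ inside the fibre $N(w_i,Q(u_0,w_i))$, which has real dimension $2N-3\geq 3$, and one checks that this fibre is not contained in the lower-dimensional bad loci $\{Q(\cdot,w_j)=0\}$ or $\bigcup_{j<k}\mathrm{span}_{\Cset}(w_j,w_k)$, so it meets the good locus. The vector reached is good and lies in $G\,u_0$, hence $G\,u_0$ coincides with an open orbit and is itself open. As this holds for all $u_0$, connectedness of $\mathcal{C}$ forces a single orbit, giving transitivity and $G=SU(Q)$.

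I expect the main obstacle to be the nonemptiness bookkeeping hidden in the last step: verifying that the relevant fibres and good loci are genuinely nonempty and not degenerate. Concretely, producing an isotropic vector with two prescribed nonzero coordinate values amounts to asking that an affine translate of the subspace $\{w_i,w_j\}^{\perp}$ meet $\{Q=0\}$, and this is exactly the content of Lemmas \ref{trans2} and \ref{trans3}: when the restriction of $Q$ to $\{w_i,w_j\}^{\perp}$ is indefinite such translates always meet the cone, while for a definite restriction one must respect the sign condition of Lemma \ref{trans2}, which I would arrange by a judicious choice of the pair $(i,j)$ among the $\binom{N}{2}$ available ones. Managing these signatures uniformly over all $u_0$ is the delicate part; everything else is the soft submersion-plus-connectedness argument above.
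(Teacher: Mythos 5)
Your main mechanism is correct, and it is genuinely different from the paper's. The paper proves openness of a $G$-orbit in the cone $\mathcal{C}=\{u\neq 0:\ Q(u)=0\}$ by showing that, for a suitable index $i$, the image of $G.u_0$ under the complex coordinate map $u\mapsto Q(u,w_i)$ has non-empty interior in $\Cset$, and then invoking the fibre transitivity of Lemma \ref{trans1}; producing that interior is exactly where the paper needs the signature case analysis of Lemmas \ref{trans2} and \ref{trans3} (the restriction of $Q$ to $\{w_i,w_j\}^{\perp}$ may be definite, and then one must perturb via Lemma \ref{trans3}). Your submersion argument bypasses all of this: the linear-algebra claim that $T_{u_0}(G_iu_0)+T_{u_0}(G_ju_0)=T_{u_0}\mathcal{C}$ if and only if $u_0\notin\mathrm{span}_{\Cset}(w_i,w_j)$ is correct --- the sum fills $T_{u_0}\mathcal{C}$ iff $v\mapsto(Q(v,w_i),Q(v,w_j))$ maps $\ker\Re Q(\cdot,u_0)$ onto $\Cset^2$, which fails iff the complex subspace $\{w_i,w_j\}^{\perp}$ lies inside $\ker\Re Q(\cdot,u_0)$, i.e.\ iff $u_0\in\mathrm{span}_{\Cset}(w_i,w_j)$ --- and it involves no signature hypotheses whatsoever. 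Completed, your route would make Lemmas \ref{trans2} and \ref{trans3} superfluous, which the paper's route does not.

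The gap is the step you yourself flag: every orbit must reach a good point, and neither of your two suggestions establishes this. The dimension count is false as stated: in $\Cset^N$ the loci $\{Q(\cdot,w_j)=0\}$ have real dimension $2N-2$ and the planes $\mathrm{span}_{\Cset}(w_j,w_k)$ have real dimension $4$, while the fibre $N(w_i,c_i)$ (where $c_i:=Q(u_0,w_i)\neq0$) has dimension $2N-3$; so the hyperplane loci are \emph{never} lower-dimensional than the fibre, and for $N=3$ neither are the planes --- and $N=3$ is genuinely needed, since Proposition \ref{stabdeg1} is applied with $N=d=3$ in the case $\alpha=1/3$ (Subsections \ref{ssdegnondeg} and \ref{ss13}). (The count can be repaired by intersecting the bad loci with the affine slice $\{Q(\cdot,w_i)=c_i\}$, where they do have dimension at most $2N-4$, plus a Baire argument for the finite union; you did not do this.) Your fallback --- a ``judicious choice'' of pairs controlled by Lemmas \ref{trans2} and \ref{trans3} --- is left unexecuted and would reinstate precisely the signature bookkeeping your approach was designed to avoid. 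The efficient fix is the translation trick the paper uses for its own first claim: for $\lambda$ on the real line $\Re(\bar\lambda c_i)=0$, the vector $u_1=u_0+\lambda w_i$ satisfies $Q(u_1)=0$ and $Q(u_1,w_i)=c_i$, hence $u_1\in N(w_i,c_i)=G_i.u_0$ by Lemma \ref{trans1}; since $Q(w_i,w_j)\neq0$ for all $j\neq i$ (this is where the hypothesis on the basis enters), each affine function $\lambda\mapsto Q(u_0,w_j)+\lambda Q(w_i,w_j)$ has at most one zero on that line, so a generic $\lambda$ makes all $N$ coordinates $Q(u_1,w_j)$ non-zero. Such a $u_1$ is automatically good: if it lay in $\mathrm{span}_{\Cset}(w_a,w_b)$ for every pair, then $u_1\in\mathrm{span}_{\Cset}(w_1,w_2)\cap\mathrm{span}_{\Cset}(w_1,w_3)=\Cset w_1$, forcing $Q(u_1,w_1)=0$, a contradiction; so some admissible pair exists. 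With this substitution your proof closes, and is arguably simpler than the paper's.
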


\begin{proof}
Let $G$ be the smallest subgroup containing $G_1,\ldots,G_N$. It is sufficient to show that $G$ acts transitively on $\mathfrak Q:= \{Q(u) = 0, u\ne 0\}$. As this last set is connected, it is sufficient to show that any orbit of $G$ in $\mathfrak Q$ has non empty interior.

\par


\begin{lemma}
Let $u_0 \in \mathfrak Q$. There exists an  index $1 \leq i \leq N$ such that the image of $G.u_0$ by the map $u \mapsto Q(u,w_i)$ has non empty interior in $\Cset$.
\end{lemma}

\begin{proof}
We first claim that there exist distinct indices $i$, $j$ and $u_1 \in G.u_0$ such that $Q(u_1,w_i)\ne 0$, $Q(u_1,w_j) \ne 0$.
\par

Let $i$ be an index such that $c_i:= Q(u_0,w_i) \ne 0$.  Let $j \ne i$. As $Q(w_i,w_j) \ne 0$, there exists $\lambda \in \Cset$ such that $\Re (\bar \lambda c_i) =0$ and
$Q(u_0 + \lambda w_i,w_j) \ne 0$. Take $u_1 := u_0 + \lambda w_i$. One has 
$Q(u_1) = 0$, $Q(u_1,w_j)=: c_j \ne 0$, $Q(u_1,w_i) =  c_i$ and $u_1 \in G_i.u_0$ by Lemma \ref{trans1}. This proves the claim.
\par
Let $F$ be the codimension $2$ subspace of $\Cset^N$ orthogonal to $w_i,w_j$.
As $Q(w_i,w_j) \ne 0$, the restriction of $Q$ to $F$ is nondegenerate.
\begin{itemize}
\item If the restriction of $Q$ to $F$ is indefinite, Lemma \ref{trans2} implies that for any $c'_i$ close to $c_i$
there exist $u_2  \in \mathfrak Q $  such that $Q(u_2,w_i) =  c'_i, Q(u_2,w_j)=c_j $. By Lemma \ref {trans1}, one has $u_2 \in G_j.u_1$. This proves the assertion of the lemma in this case.
\item 
Assume that the restriction of $Q$ to $F$ is positive definite (the negative case is symmetric). Then the restriction of $Q$ to $F^{\bot}$ is indefinite. Identify $F^{\bot}$ to $\Cset^2$ through $u \mapsto (Q(u,w_i),Q(u,w_j))$. We have $Q(c_i,c_j) \leq 0$ 
by Lemma \ref{trans2}. If $Q(c_i,c_j) < 0$, we proceed as in the first case to get the conclusion of the lemma (using again Lemma \ref{trans2}). If $Q(c_i,c_j)=0$, by Lemma \ref{trans3} there exists $c'_j$ close to $c_j$ such that $Q(c_i , c'_j) < 0$.
Then there exists $u'_1 \in \mathfrak Q$ such that $Q(u'_1,w_i)=c_i, Q(u'_1,w_j) = c'_j$.
One has $u'_1 \in G_i.u_1$ from Lemma \ref{trans1}. Then the end of the argument is the same than for $Q(c_i,c_j) <0$.
\end{itemize}
\end{proof}

\medskip

From the lemma, we may assume $c_i := Q(u_0,w_i) $ is different from $0$ and that a small neighborhood of $c_i$ in $\Cset$ is contained in the   image of $G.u_0$ by the map $u \mapsto Q(u,w_i)$. Let $u \in \mathfrak Q$ be close to $u_0$.  There exists $u_1 \in G.u_0$ such that $Q(u_1,w_i) = Q(u,w_i) (\ne 0)$. By Lemma \ref{trans1}, one has $u \in G_i.u_1 \subset G.u_0$.
\end{proof}

Putting together Proposition \ref{stabdeg1} and Corollary \ref{Zar1}, we obtain

\begin{corollary}\label{cordegnondeg}
The intersection with $SU(Q_{\alpha})$ of the subgroup generated by the $L_p$ is 
Zariski dense in $SU(Q_{\alpha})$.
\end{corollary}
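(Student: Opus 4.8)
The plan is to exhibit $SU(\Qa)$ as the closed subgroup generated by the stabilizers of the isotropic lines $\Cset w_p$, $p\in\Ad$, and then to combine Corollary \ref{Zar1} with Proposition \ref{stabdeg1}. So the work consists entirely in checking that the family $(w_p)_{p\in\Ad}$ meets the hypotheses of Proposition \ref{stabdeg1}; the group theory is then formal.

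First I would record the geometric properties of the $w_p$. By definition each $w_p$ spans the $\Qa$-orthogonal of $H_p$ and lies inside $H_p$, so $\Qa(w_p)=0$ with $w_p\neq0$; thus every $w_p$ is isotropic (in particular $\Qa$ is indefinite, as required). Since $\bigcap_{p\in\Ad}H_p=\{0\}$ and $\Qa$ is non-degenerate, the lines $\Cset w_p=H_p^{\perp}$ satisfy $\sum_{p}\Cset w_p=\bigl(\bigcap_p H_p\bigr)^{\perp}=\Cset^{\Ad}$; as there are exactly $d=\dim\Cset^{\Ad}$ of them, $(w_p)_{p\in\Ad}$ is a basis.

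Next comes the crucial point, pairwise non-orthogonality. Suppose $\Qa(w_p,w_q)=0$ for some $p\neq q$. Then $w_q\in(\Cset w_p)^{\perp}=H_p$, and since $w_q\in H_q$ always, we would have $w_q\in H_p\cap H_q$. The restriction of $\Qa$ to the codimension-two coordinate subspace $H_p\cap H_q$ is isomorphic to the corresponding form on $\Cset^{\A_{d-2}}$, which is non-degenerate because $(d-2)\alpha=d\alpha-2\alpha\notin\Zset$ (as $d\alpha\in\Zset$ and $0<2\alpha<1$). On the other hand $w_q$ spans the radical of $\Qa|_{H_q}$, so $\Qa(w_q,y)=0$ for every $y\in H_q$, in particular for every $y\in H_p\cap H_q$; hence $w_q$ lies in the radical of $\Qa|_{H_p\cap H_q}$, forcing $w_q=0$, a contradiction. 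Therefore $\Qa(w_p,w_q)\neq0$ for all $p\neq q$, and after relabelling, $(w_1,\dots,w_d):=(w_p)_{p\in\Ad}$ satisfies all hypotheses of Proposition \ref{stabdeg1} with $N=d\geq 5\geq 3$.

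Finally I would assemble the pieces. Let $G$ be the Zariski closure of the intersection with $SU(\Qa)$ of the subgroup generated by all the $L_p$, $p\in\Ad$. For each $p$, the subgroup generated by the $L_q$ with $q\neq p$ is contained in the full group, so by Corollary \ref{Zar1} the Zariski closure of its intersection with $SU(\Qa)$, namely $G_p:={\rm Stab}(w_p)$, is contained in $G$. Since $G$ is Zariski closed it is also closed for the usual topology, hence it contains the smallest closed subgroup containing $G_1,\dots,G_d$, which by Proposition \ref{stabdeg1} is all of $SU(\Qa)$. As the reverse inclusion $G\subseteq SU(\Qa)$ is trivial, $G=SU(\Qa)$, which is precisely the asserted Zariski density. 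The main obstacle is the pairwise non-orthogonality step: it is the only place where the arithmetic hypothesis $d\alpha\in\Zset$ (with $0<\alpha<\tfrac12$) is genuinely used, through the non-degeneracy of $\Qa$ on the codimension-two slices $H_p\cap H_q$. The only bookkeeping subtlety is that Corollary \ref{Zar1} produces a Zariski closure whereas Proposition \ref{stabdeg1} is phrased for the coarser ordinary-topology closure, which is harmless since Zariski-closed implies closed.
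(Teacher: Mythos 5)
Your proposal is correct and follows the same overall strategy as the paper: verify that the isotropic vectors $w_p$ form a pairwise non-orthogonal basis, then feed the stabilizers ${\rm Stab}(w_p)$ supplied by Corollary \ref{Zar1} into Proposition \ref{stabdeg1} (handling the Zariski-versus-usual topology point exactly as you do). The one substantive step, pairwise non-orthogonality, is where you genuinely diverge. The paper observes that $Q_{\alpha}(w_p,w_q)=0$ would put $w_p$ in $H_p\cap H_q$, so that two of its coordinates vanish, and rules this out by the explicit diagonalization formulas of Subsection \ref{ssdiag}, from which the kernel vector of the degenerate restriction has all coordinates nonzero (they are roots of unity). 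You instead argue abstractly: $w_q$ would lie in $H_p\cap H_q$ and, being in the radical of $Q_{\alpha}|_{H_q}$, it would lie in the radical of $Q_{\alpha}|_{H_p\cap H_q}$, which is a copy of the standard form on $\Cset^{\A_{d-2}}$ and is non-degenerate, forcing $w_q=0$. This buys independence from the explicit coordinate computation (using only the restriction mechanism of Lemma \ref{restrict} and arithmetic of $\alpha$), at the cost of one extra non-degeneracy check; the paper's version is shorter given that the formulas are already available. One small slip in your check: with the paper's conventions, the form on $\Cset^{\A_m}$ is degenerate iff $(m+1)\alpha\in\Zset$ (e.g. $Q_{\alpha}$ on $\Cset^{\Ad}$ degenerates iff $(d+1)\alpha\in\Zset$), so the relevant quantity for $\Cset^{\A_{d-2}}$ is $(d-1)\alpha$, not $(d-2)\alpha$; fortunately $(d-1)\alpha=d\alpha-\alpha\notin\Zset$ under the standing hypotheses $d\alpha\in\Zset$, $0<\alpha<\tfrac12$, so your conclusion is unaffected. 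You also verify explicitly that the $w_p$ form a basis, which the paper leaves implicit (it was established in the generic-case subsection that $\Cset^{\Ad}=\oplus_p H'_p$ whenever $\rho^{d+1}\ne1$); this is a welcome bit of extra care.
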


\begin{proof}
We apply Proposition \ref{stabdeg1}, taking for $w_p$ ($p \in \Ad$) a generator of the orthogonal $H'_p$ of $H_p$.  We have $Q_{\alpha} (w_p) =0$. If we had $Q_{\alpha}(w_p,w_q) = 0$ for some distint $p,q \in \Ad$, the vector $w_p$ would belong to $H_p \cap H_q$, and two of its coordinates in the canonical basis would vanish. But we know from the diagonalisation formulas of subsection \ref{ssdiag} that it is not so.
From Corollary \ref{Zar1}, the Zariski closure of the intersection with $SU(Q_{\alpha})$ of the subgroup generated by the $L_p$ contains the stabilizer of each $w_p$.
Therefore it is equal to $SU(Q_{\alpha})$.
\end{proof}

\subsection{Exceptional case I: $\mathbf {\alpha = 1/3}$} \label{ss13}

At this stage, we can conclude that the Zariski closure of the intersection with 
$SU(Q_{\alpha})$ of the subgroup generated by the $L_p$ is equal to $SU(Q_{\alpha})$ when $\alpha \ne \frac 16, \frac 14, \frac 13$ and $(d+1) \alpha$ is not an integer (so that $Q_{\alpha}$ is non-degenerate). We may even replace Zariski closure by closure for the usual topology when the form is definite, i.e  $(d+1) \alpha <1$. Indeed it is sufficient to proceed by induction on the dimension $d$, starting with the results of Subsection \ref{ssd2}, and applying successively either Proposition \ref{propind1}, Corollary \ref{cordegnonexcept}, or Corollary \ref{cordegnondeg}.

\par
In the two exceptional cases $\alpha = \frac 16, \frac 14$ , the group generated 
by $L_1$, $L_{-1}$ in dimension $2$ is a finite group so it is not a basis for a successful induction. These two cases will be dealt with in later subsections. 
However, the case $\alpha = {\frac 13}$ does not need any supplemental work.

\par
Remember that $Q_{\frac 13}$ is degenerate for $d=2$. The kernel of $Q_{\alpha}$ is generated by $e:= e_{-1} + j e_1$ and fixed by $L_{-1}$ and $L_1$. It was observed  in Subsection \ref{ssd2} that the intersection of the group generated by  $L_{-1}$ and $L_1$ with $SL(2,\Cset)$ contains a parabolic matrix. The determinant of $L_{-1}$ and $L_1$ is $-\zeta$, a sixth root of unity.
This is sufficient to show that the Zariski closure $G$ of the intersection of the group generated by  $L_{-1}$ and $L_1$ with $SL(2,\Cset)$ is the stabilizer of $e$ in 
$SL(2,\Cset)$. Indeed, $G$ is contained in this stabilizer. As $\Rset$-Lie groups, the stabilizer of $e$ in $SL(2,\Cset)$ has only three types of Zariski closed  subgroups: the two trivial subgroups\footnote{I.e., $\{\textrm{Id}\}$ and the full stabilizer itself.}, and (given any vector $f$ independent of $e$) the subgroup $G_f$ of the stabilizer formed of elements $g$ such that $g.f -f$ is a {\bf real} multiple of $e$ (there is a one-parameter family, parametrized by the $1$-dimensional real projective space, of such subgroups). Here, the existence of a parabolic element guarantees that $G$ is not reduced to the identity. It cannot be of the intermediate form, because conjugating by powers of $L_1$ an element $g$ such that $g.f -f = e$, we get elements $g'$ such that $g'.f -f = \omega e$ for any sixth root of unity. Therefore $G$ is equal to the full stabilizer.

\par
Thus the argument from Subsubsection \ref{ssdegnondeg} can be used to conclude the desired result (even though the dimension $d$ is not as high as assumed there).

\subsection{Exceptional case II: $\mathbf{\alpha = 1/4}$} \label{ss14}

We assume in this subsection that $\alpha = \frac 14$. Then, we have seen in Subsection \ref {ssd2} that 
the group generated by $L_{-1}$ and $L_1$ is finite. For $d=3$, the form 
$Q_{\frac 14}$ is degenerate, but the Zariski closure $G$ of the intersection with $SL(3,\Cset)$ of the group generated by $L_{-2}$, $L_0$ and $L_2$ is  
strictly smaller than the group $SU^*(Q_{\frac 14})$ described in Subsection 
\ref{ssdegnonexc} (see below). It is only from dimension $4$ that we get a "big" group generated by the $L_p$.

\par
Consider first the case $d=2$. It can be checked that the group $\Gamma$ generated by $L_{-1}$ and $L_1$ has order $96$. The property of $\Gamma$ that will be useful in the sequel is 

\begin{lemma}\label{14lem1}
The representation of $\Gamma$ on $\Cset^2 \simeq \Rset^4$ induced by the inclusion $\Gamma \subset U(Q_{\frac 14})$, is irreducible over $\Rset$.
\end{lemma}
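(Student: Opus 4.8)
The plan is to prove $\Rset$-irreducibility of the realification by combining irreducibility over $\Cset$ with a determinant obstruction that excludes the only remaining possibility. First I would specialize the operators of Subsection~\ref{operators} to $\alpha=\frac14$, so that $\rho=i$ and $\zeta=-i$; in the basis $(e_{-1},e_1)$ of $V:=\Cset^{\A_2}$ they read
$$ L_1=\begin{pmatrix}1&0\\ i&i\end{pmatrix},\qquad L_{-1}=\begin{pmatrix}i&-1\\ 0&1\end{pmatrix}. $$
Since $Q_{\frac14}$ is positive definite (Subsection~\ref{ssd2}), the group $\Gamma\subset U(Q_{\frac14})$ preserves the positive definite real inner product $\Re Q_{\frac14}$ on $V\cong\Rset^4$; in particular every $\Gamma$-invariant real subspace has a $\Gamma$-invariant $\Re Q_{\frac14}$-orthogonal complement.

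Next I would check that $V$ is irreducible over $\Cset$. A $\Gamma$-invariant complex line would be a common eigenline of $L_1$ and $L_{-1}$. Each of these two operators has the distinct eigenvalues $1$ and $i$, so its eigenlines are unambiguous, and a one-line computation gives $\Cset e_1$ and $\Cset(-(1+i)e_{-1}+e_1)$ for $L_1$, and $\Cset e_{-1}$ and $\Cset(e_{-1}+(i-1)e_1)$ for $L_{-1}$; these four lines are pairwise distinct. Hence no common eigenline exists and $V$ is $\Cset$-irreducible.

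Finally I would rule out reducibility over $\Rset$. Suppose $W\subset\Rset^4$ were a nonzero proper $\Gamma$-invariant $\Rset$-subspace, and let $J$ be multiplication by $i$. Then $W+JW$ and $W\cap JW$ are $J$-invariant, hence complex, and $\Gamma$-invariant, so by $\Cset$-irreducibility each is $0$ or $V$; as $W\neq 0$ and $W\neq V$ this forces $W+JW=V$ and $W\cap JW=0$, i.e. $V=W\oplus JW$ exhibits $W$ as a $\Gamma$-invariant real form of $V$. But then every $g\in\Gamma$ maps $W$ into $W$, so in an $\Rset$-basis of $W$ (which is simultaneously a $\Cset$-basis of $V$) the matrix of $g$ is real, whence $\det_{\Cset}g\in\Rset$. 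This contradicts $\det L_1=i$. Therefore no such $W$ exists and $\Gamma$ acts $\Rset$-irreducibly on $\Cset^2\simeq\Rset^4$.

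The main obstacle, and the only genuinely structural step, is this last reduction. It is tempting but wrong to argue through the character alone, since a two-dimensional unitary representation with $V\cong\overline V$ can still be $\Rset$-irreducible (the quaternionic case); the relevant dichotomy is \emph{real type versus everything else}, and the construction of a real form $W$ out of an invariant real subspace is precisely what detects it. Once the form $W$ has been produced, the determinant identity $\det L_p=-\zeta=i\notin\Rset$ closes the argument immediately, and this is the single point where the value $\alpha=\frac14$ (in fact any $0<\alpha<\frac12$) is used.
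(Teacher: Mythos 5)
Your proof is correct, and it shares the paper's first step (no common eigenvector of $L_1$ and $L_{-1}$, hence $\Cset$-irreducibility) but replaces the paper's second step by a genuinely different argument. The paper simply observes that $\Gamma$ contains the scalar multiplications by the fourth roots of unity (indeed one can check $(L_1L_{-1})^3 = i\cdot\mathrm{id}$), so that any $\Gamma$-invariant $\Rset$-subspace is automatically complex and $\Cset$-irreducibility finishes the proof in one line. You instead never exhibit a scalar inside $\Gamma$: from an invariant real subspace $W$ you build the complex invariant subspaces $W+JW$ and $W\cap JW$, conclude that $W$ would be an invariant real form, and kill that possibility with the determinant obstruction $\det L_1 = -\zeta = i \notin \Rset$. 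What the paper's route buys is brevity, at the cost of a concrete computation inside the group (it relies on knowing which scalars $\Gamma$ contains); what your route buys is robustness and generality --- it needs nothing about $\Gamma$ beyond the generators' determinants, and, as you note, the same argument rules out invariant real forms for every $\alpha\in(0,\tfrac12)$, since $-\zeta$ is then never real (this is close in spirit to the paper's later Proposition \ref{propRirreduc}, which handles general $\alpha$ and $d$ by a different, eigenspace-based mechanism). One stylistic remark: the opening observation that $\Gamma$ preserves $\Re Q_{\frac14}$ and hence admits invariant orthogonal complements is never used in your argument and could be deleted.
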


\begin{proof} It is clear that $L_1$ and $L_{-1}$ do not have a common eigenvector, therefore the representation is irreducible over $\Cset$. But $\Gamma$ contains the scalar multiplicatons by the fourth roots of unity, hence any $\Rset$-subspace invariant under $\Gamma$ has to be complex.
\end{proof}

We can now determine what is the Zariski closure $\Gamma_3$ of the group generated by the $L_p$ for $d=3$. As in Subsection \ref{ssdegnonexc}, let us denote by $e$ a generator of the kernel of $Q_{\frac 14}$, and write the operators in the basis $(e,e_0,e_2)$ (using that all coordinates of $e$ are non zero). All $L_p$ fix $e$ so the matrices take the block-form 

$$\left( \begin{array}{cc} 1 & v \\ 0 & \gamma \end{array} \right)$$

Here, the $2 \times 2$ block $\gamma$ will vary exactly in the finite group $\Gamma$ mentioned above (hence $G$ will not contain $SU^*(Q_{\frac 14})$).
Therefore we have an exact sequence
$$ 1 \longrightarrow K \longrightarrow \Gamma_3 \longrightarrow \Gamma \longrightarrow 1,$$
where the kernel $K$ is the Zariski closed subgroup of $(H_{-2})^*$ formed of 
those $v$ such that $\left( \begin{array}{cc} 1 & v \\ 0 & {\mathbf 1} \end{array} \right)$ belongs to $\Gamma_3$.
Note that  $K$ is not only a Zariski closed subgroup of $(H_{-2})^*$, it is also
invariant under the representation of $\Gamma$ dual to that induced by the 
inclusion of $\Gamma$ in $U(Q'_{\frac 14})$: indeed, conjugating an element of
 the kernel by an element of $\Gamma_3$ with diagonal block $\gamma$
  changes $v$ into $v.\gamma^{-1}$. By Lemma \ref {14lem1} this representation
   is irreducible over $\Rset$. We conclude that $K$ must be equal to either 
   $\{0\}$ or $(H_{-2})^*$.

\begin{lemma}\label{14lem2}
The kernel $K$ is equal to $(H_{-2})^*$.
\end{lemma}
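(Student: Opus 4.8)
The plan is to use the dichotomy already in hand. By Lemma~\ref{14lem1} the representation of $\Gamma$ on $(H_{-2})^*$ is irreducible over $\Rset$, and $K$ is a $\Gamma$-invariant Zariski-closed subgroup, so $K$ is either $\{0\}$ or all of $(H_{-2})^*$. Hence it suffices to prove $K\neq\{0\}$, i.e.\ to exhibit a single nontrivial transvection $\left(\begin{smallmatrix}1&v\\0&\mathbf 1\end{smallmatrix}\right)$ with $v\neq 0$ inside $\Gamma_3$. Equivalently, since the block map $\Gamma_3\to\Gamma$ has finite image, it is enough to show that the group generated by $L_{-2},L_0,L_2$ is infinite: if $g$ in this group has infinite order, its image in $\Gamma$ has some finite order $n$, so $g^{n}$ is a nontrivial element of the kernel $K$.

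To produce an element of infinite order I would look for one that is \emph{not} semisimple. The obstacle is that the obvious short words are all diagonalizable of finite order: because each $L_p$ fixes $e$ and its $2\times 2$ block lies in the finite unitary group $\Gamma$, any product whose eigenvalue $1$ has geometric multiplicity equal to its algebraic multiplicity merely lifts a finite-order element of $\Gamma$ (for instance the pairwise products have characteristic polynomial $(\lambda-1)(\lambda^{2}-\lambda+1)$, with three distinct roots, hence are diagonalizable). One therefore has to find a word whose eigenvalue-$1$ generalized eigenspace carries a genuine Jordan block. I would take
\[
g:=L_{-2}\,L_{2}^{-1}\,L_{0},
\]
written in the basis $(e_{-2},e_0,e_2)$ (recall $\zeta=-i$, $\rho=i$ when $\alpha=\tfrac14$), compute its characteristic polynomial, and check that it equals $(\lambda-1)^{2}(\lambda-i)$ (trace $2+i$, determinant $i$). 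The decisive point is then the rank computation $\textrm{rank}(g-\textrm{Id})=2$: the first two rows of $g-\textrm{Id}$ are visibly independent, so the eigenvalue $1$ has geometric multiplicity $1<2$ and $g$ is \emph{not} diagonalizable.

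Consequently $g$ has infinite order; more precisely its image in $\Gamma$ has order $4$ (block eigenvalues $1,i$), so $g^{4}$ is a nontrivial unipotent which fixes $e$ and has trivial $2\times 2$ block, i.e.\ $g^{4}\in K\setminus\{0\}$. This forces $K=(H_{-2})^{*}$, proving the lemma. The only delicate step is the final verification of non-semisimplicity: it is precisely what distinguishes $d=3$ from the finite picture in dimension $2$, and it must be established by the explicit rank computation above, since eigenvalue bookkeeping alone does not detect the Jordan block.
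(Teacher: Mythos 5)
Your proof is correct, and it reaches the conclusion by a genuinely different mechanism than the paper. Both arguments share the same starting point (the dichotomy established just before the lemma: by Lemma \ref{14lem1} the kernel $K$ is a $\Gamma$-invariant Zariski-closed subgroup of $(H_{-2})^*$, hence equals $\{0\}$ or everything), so everything rests on proving $K\neq\{0\}$, equivalently that the group generated by $L_{-2},L_0,L_2$ is infinite. The paper does this abstractly and by contradiction: if $K$ were trivial, $\Gamma_3$ would be finite, so by semisimplicity of finite-group representations the invariant line $\Cset e$ would admit a $\Gamma_3$-invariant complementary $2$-plane, forcing the transposed matrices ${}^t L_{-2}$, ${}^t L_0$, ${}^t L_2$ to have a common eigenvector, which fails by inspection. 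You instead produce an explicit certificate of infiniteness: the element $g=L_{-2}L_2^{-1}L_0$. I verified your numbers with $\zeta=-i$, $\rho=i$: in the basis $(e_{-2},e_0,e_2)$ one gets trace $2+i$, determinant $i\cdot(-i)\cdot i=i$, sum of principal $2\times 2$ minors $1+2i$, hence characteristic polynomial $\lambda^3-(2+i)\lambda^2+(1+2i)\lambda-i=(\lambda-1)^2(\lambda-i)$, and indeed $\mathrm{rank}(g-\mathrm{Id})=2$ (rows $(i,0,i)$ and $(i,i-1,-1)$ are independent, while $\det(g-\mathrm{Id})=0$), so the eigenvalue $1$ carries a Jordan block, $g$ is non-semisimple and of infinite order, and $g^4$ (whose $2\times2$ block is trivial since that block has eigenvalues $1,i$ and lies in the finite group $\Gamma$) is a nonzero element of $K$. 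Your route costs a concrete $3\times 3$ computation but is constructive and fully self-contained --- it exhibits an actual unipotent (parabolic) element of $\Gamma_3$ --- whereas the paper's argument is shorter and computation-free but leaves its final assertion (no common eigenvector of the transposes) as an implicit check. Both are sound; yours in addition pinpoints exactly where $d=3$ departs from the finite picture at $d=2$.
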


\begin{proof}
Otherwise, $K$ would be trivial and $\Gamma_3$ would be finite. As any representation of a finite group is semi-simple, there would exist a $\Gamma_3$-invariant complex $2$-plane supplemented by $\Cset e$. But then the transposed matrices $^t L_{-2}$, $^t L_{0}$, $^t L_{2}$ would have a common eigenvector. This is clearly not the case.
\end{proof}

Having described $\Gamma_3$, we now go to the case $d=4$. To represent the operators $L_p$, we chose a basis $w_{-3}, f_{-1}, f_1, w_3$ with the following properties:
\begin{itemize}
\item $w_{-3}$ is a generator of the orthogonal of the hyperplane $H_{-3}$;
\item $w_{3}$ is a generator of the orthogonal of the hyperplane $H_{3}$;
\item $Q_{\frac 14}(w_{-3} , w_3) =1$
\item the subspace $F$ generated by $f_{-1}, \, f_1$ is the orthogonal of the subspace $W$ generated by $w_{-3} , w_3$; note that the restriction of $Q_{\frac 14}$ to $W$ has signature $(1,1)$, hence $W$ and $F$ are indeed transverse and the restriction of $Q_{\frac 14}$ to $F$ has signature $(2,0)$.
\item $f_{-1},f_1$ form an orthonormal basis of $F$.
\end{itemize}

\par
Observe that the first three vectors form a basis of $H_{-3}$, and the last three a basis of $H_3$.

Consider the  Zariski closure of the subgroup generated  by $L_{-1}, \, L_1, \,L_3$. 
An element in this group preserves the hyperplane $H_{-3}$ and its restriction to $H_{-3}$ is constrained exactly by the case $d=3$: we can write it in $1+2+1$ block form as

$$\left( \begin{array}{ccc}1 &v&s\\ 0&\gamma & v' \\ 0 & 0 & \omega \end{array} \right ) $$
with $\gamma \in \Gamma$.
We restrict to the subgroup $G_{-3}$ of finite index such that 
$\gamma = {\mathbf 1}_{\Gamma}$. 
As in Subsection \ref{ssdegnondeg}, writing that the form $Q_{\frac 14}$ is preserved gives (when $\gamma = {\mathbf 1}_{\Gamma}$)
$$ \omega = 1, \qquad v' = -^t \bar v, \qquad  \Re s = -\frac 12 ||v||^2 .$$

\begin{lemma}
Conversely, any matrix of the prescribed form satisfying these relations belongs
to $G_{-3}$.
\end{lemma}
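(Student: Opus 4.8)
The plan is to reorganize the statement group-theoretically. Writing elements in the basis $(w_{-3},f_{-1},f_1,w_3)$, let $N$ denote the set of matrices
$$ n(v,s) := \left(\begin{array}{ccc} 1 & v & s \\ 0 & \mathbf 1 & -{}^t\bar v \\ 0 & 0 & 1\end{array}\right), \qquad v \in (F)^*,\quad \Re s = -\tfrac12\|v\|^2, $$
where $\|v\|^2=v\,{}^t\bar v$ is computed with the orthonormal basis $f_{-1},f_1$ of $F$. A direct multiplication shows that $N$ is a group: the corner of $n(v_1,s_1)n(v_2,s_2)$ is $s_1+s_2-\langle v_1,v_2\rangle$ with $\langle v_1,v_2\rangle:=v_1\,{}^t\bar v_2$, and its real part is indeed $-\tfrac12\|v_1+v_2\|^2$. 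Thus $N$ is the Heisenberg group (the unipotent radical of $\mathrm{Stab}(w_{-3})$), with center $K=\{n(0,s):\Re s=0\}$ and abelianization recorded by $v$. Since necessity of the three relations is already established, it remains to prove $G_{-3}\supseteq N$. As $N$ is generated by $K$ together with one lift of each $v\in(F)^*$, it suffices to show that the coordinate map $\theta\colon G_{-3}\to (F)^*$, $n(v,s)\mapsto v$, is onto and that $G_{-3}\supseteq K$.

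For the surjectivity of $\theta$ I would invoke the case $d=3$. Restriction to the invariant hyperplane $H_{-3}$ sends the Zariski closure of $\langle L_{-1},L_1,L_3\rangle$ onto $\Gamma_3$, and under the identification of $H_{-3}$ with $\Cset^{\A_3}$ the radical line becomes $\Cset w_{-3}$ while $F=\langle f_{-1},f_1\rangle$ is the complement. By Lemma \ref{14lem2} the kernel of $\Gamma_3\to\Gamma$ is the full space $(F)^*$; since $G_{-3}$ is by definition the subgroup whose middle block $\gamma$ equals $\mathbf 1_\Gamma$, its restriction to $H_{-3}$ is precisely $\{\left(\begin{smallmatrix}1&v\\0&\mathbf 1\end{smallmatrix}\right):v\in(F)^*\}$. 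Hence every $v\in(F)^*$ is realized, so $\theta$ is onto (recall also that $\omega=1$ is forced, as in Proposition \ref{propnotsplit}).

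The crux is $G_{-3}\supseteq K$, and here I would use the Heisenberg commutator together with a Zariski-closure dichotomy. The intersection $G_{-3}\cap K$ is a Zariski-closed subgroup of $K\cong\mathbb{G}_a$, hence equals $\{0\}$ or all of $K$, so it suffices to produce one nonzero element. Given $v_1,v_2\in(F)^*$, choose lifts $n(v_1,s_1),n(v_2,s_2)\in G_{-3}$ (possible by the previous paragraph); since $N/K$ is abelian, their commutator lies in $K$ and, by the corner formula above, equals $n\!\left(0,\,-2i\,\Im\langle v_1,v_2\rangle\right)$, independently of $s_1,s_2$. Because $Q_{\frac14}$ is positive definite on $F$, the induced Hermitian form $\langle\cdot,\cdot\rangle$ on $(F)^*$ is definite, so its imaginary part is a nondegenerate alternating form; taking $v_2=iv_1$ gives $\Im\langle v_1,v_2\rangle=-\|v_1\|^2\neq 0$ and thus a nonzero element of $G_{-3}\cap K$. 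Therefore $G_{-3}\supseteq K$. Finally, for any $n(v,s)\in N$ pick $n(v,s')\in G_{-3}$ with the same $v$; then $n(v,s)n(v,s')^{-1}\in K\subseteq G_{-3}$ because both matrices share the same $v$ and the same corner real part $-\tfrac12\|v\|^2$, whence $n(v,s)\in G_{-3}$, proving $N\subseteq G_{-3}$.

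The main obstacle is precisely the commutator step: surjectivity of $\theta$ and the unitarity relations only pin down $v$ and $\Re s$, so the remaining degree of freedom $\Im s$ must be manufactured from the non-commutativity of the generators. The substantive point is to verify that the alternating form governing the Heisenberg commutator is nondegenerate on the two-dimensional piece $(F)^*$, which is exactly where the signature $(2,0)$ of $Q_{\frac14}|_F$ is used.
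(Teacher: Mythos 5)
Your proof is correct and follows essentially the same route the paper intends: its one-line proof defers to Proposition \ref{propnotsplit} and Corollary \ref{Zar1}, i.e., surjectivity onto the $v$-part coming from the $d=3$ case (Lemma \ref{14lem2}), the dichotomy for Zariski-closed subgroups of the one-parameter kernel $K$, and non-splitness of the central extension to rule out a trivial kernel. Your Heisenberg commutator computation is simply the concrete form of that non-splitness argument (a split central extension with abelian quotient would be abelian, while the commutator $-2i\,\Im\langle v_1,v_2\rangle$ is nonzero), so the two proofs coincide in substance.
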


\begin{proof}
Essentially the same as in Proposition \ref{propnotsplit} and Corollary \ref {Zar1}.
\end{proof}

The $5$-dimensional Lie algebra $\mathfrak g_{-3}$ of $G_{-3}$ is therefore the set of matrices of the form

$$ A=\left( \begin{array}{cccc}0&v_{-1} & v_1& i\,s \\ 0 &0&0&- \bar v_{-1} \\
                                            0 & 0 & 0 & -\bar v_1 \\ 0 & 0 & 0 & 0 \end{array} \right ) $$
with $v_{-1},\, v_1 \in \Cset$ and $s \in \Rset$.

Similarly, from the action of $L_{-3}, \, L_{-1}, \, L_1$, one obtains a Zariski closed group $G_3$ whose Lie algebra $\mathfrak g_3$ is the set  of  matrices of the form

$$B=  \left( \begin{array}{cccc}0& 0 & 0 & 0   \\ - \bar u_{-1}  &0&0& 0\\
                                            -\bar u_1 & 0 & 0 & 0  \\ i\, r & u_{-1} & u_1 & 0 \end{array} \right ) $$
with $u_{-1},\, u_1 \in \Cset$ and $r \in \Rset$.

\begin{lemma} \label{14lem3}
The smallest Lie algebra containing $\mathfrak g_{-3}$ and $\mathfrak g_3$ is the Lie algebra ${\mathfrak su}(Q_{\frak 14})$.
\end{lemma}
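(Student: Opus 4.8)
The plan is to prove that the Lie algebra $\mathfrak{h}$ generated by $\mathfrak{g}_{-3}$ and $\mathfrak{g}_3$ already has real dimension $15=\dim_{\Rset}\mathfrak{su}(Q_{1/4})$; since each $L_p$ lies in $SU(Q_{1/4})$ we have $\mathfrak{g}_{\pm 3}\subseteq\mathfrak{su}(Q_{1/4})$, so $\mathfrak{h}\subseteq\mathfrak{su}(Q_{1/4})$ and the dimension count forces equality. I work throughout in the $1+2+1$ block decomposition $\CAD=\Cset w_{-3}\oplus F\oplus\Cset w_3$ with $F=\Cset f_{-1}\oplus\Cset f_1$, in which $Q_{1/4}$ has Gram matrix
$$J=\begin{pmatrix}0&0&0&1\\0&1&0&0\\0&0&1&0\\1&0&0&0\end{pmatrix}.$$
Let $H:=\mathrm{diag}(1,0,0,-1)$ and let $\mathfrak{g}_0$ be the centralizer of $H$ in $\mathfrak{su}(Q_{1/4})$, i.e. its block-diagonal part; solving $X^*J+JX=0$ and $\mathrm{tr}\,X=0$ for block-diagonal $X$ gives at once $\dim_{\Rset}\mathfrak{g}_0=5$. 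In this basis $\mathfrak{g}_{-3}$ consists of strictly block-upper-triangular matrices and $\mathfrak{g}_3$ of strictly block-lower-triangular ones, with disjoint matrix supports; each is $5$-dimensional, so $\mathfrak{g}_{-3}\oplus\mathfrak{g}_3$ is a $10$-dimensional subspace of the off-block-diagonal part of $\mathfrak{su}(Q_{1/4})$. Since that off-block-diagonal part has dimension $15-\dim\mathfrak{g}_0=10$, the two coincide: $\mathfrak{g}_{-3}\oplus\mathfrak{g}_3$ is \emph{all} of the off-block-diagonal part of $\mathfrak{su}(Q_{1/4})$.

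The reduction I will use is the following projection trick. Because $\mathfrak{g}_{-3}\oplus\mathfrak{g}_3\subseteq\mathfrak{h}$ contains every off-block-diagonal element of $\mathfrak{su}(Q_{1/4})$, for any $Z\in\mathfrak{h}$ the off-block-diagonal part of $Z$ lies in $\mathfrak{h}$, and hence so does the block-diagonal part $Z-(\text{off-diagonal part})$. Applying this to $Z=[A,B]$ with $A\in\mathfrak{g}_{-3}$, $B\in\mathfrak{g}_3$, it suffices to show that the block-diagonal parts of these brackets span $\mathfrak{g}_0$. Indeed that yields $\mathfrak{h}\supseteq\mathfrak{g}_{-3}\oplus\mathfrak{g}_0\oplus\mathfrak{g}_3$, a $15$-dimensional space, whence $\mathfrak{h}=\mathfrak{su}(Q_{1/4})$.

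It remains to carry out the bracket computation. Parametrising $A\in\mathfrak{g}_{-3}$ by $(v_{-1},v_1,s)$ and $B\in\mathfrak{g}_3$ by $(u_{-1},u_1,r)$ as in the displayed forms for $\mathfrak{g}_{\pm 3}$, a direct multiplication gives a block-diagonal part of $[A,B]$ whose top-left entry is $-(v_{-1}\bar u_{-1}+v_1\bar u_1+sr)$, whose bottom-right entry is the negative of its conjugate, and whose inner $2\times2$ block is
$$\begin{pmatrix}\bar u_{-1}v_{-1}-\bar v_{-1}u_{-1} & \bar u_{-1}v_1-\bar v_{-1}u_1\\ \bar u_1 v_{-1}-\bar v_1 u_{-1} & \bar u_1 v_1-\bar v_1 u_1\end{pmatrix}.$$
Evaluating on the five choices $v=u=0,\ sr\ne 0$; $(v_{-1},u_{-1})=(1,i)$; $(v_1,u_1)=(1,i)$; $(v_{-1},u_1)=(1,1)$; and $(v_{-1},u_1)=(1,i)$ (all other parameters $0$) produces, respectively, a nonzero multiple of $H$, two elements with purely imaginary diagonal and distinct inner diagonal blocks, and the two independent off-diagonal directions of the inner block. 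These five block-diagonal matrices are visibly linearly independent, so they span the $5$-dimensional $\mathfrak{g}_0$, which completes the argument.

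The only genuine labour is this last computation together with the independence check. Conceptually it is the standard fact that the $|2|$-graded simple Lie algebra $\mathfrak{su}(Q_{1/4})=\bigoplus_{j=-2}^{2}\mathfrak{g}_j$ (the grading by $\mathrm{ad}(H)$, in which $\mathfrak{g}_{-3}=\mathfrak{g}_1\oplus\mathfrak{g}_2$ and $\mathfrak{g}_3=\mathfrak{g}_{-1}\oplus\mathfrak{g}_{-2}$ are the two opposite nilradicals) is generated by $\mathfrak{g}_{\pm 1}\oplus\mathfrak{g}_{\pm 2}$, since $\mathfrak{g}_0=[\mathfrak{g}_1,\mathfrak{g}_{-1}]$; I prefer the explicit version above because it is self-contained. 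I expect the main obstacle to be purely bookkeeping: carrying the entries of $[A,B]$ without sign errors and confirming that the five chosen $\mathfrak{g}_0$-components are independent. No conceptual difficulty arises once the block structure and the projection trick of the second paragraph are in place.
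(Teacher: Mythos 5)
Your proof is correct and follows essentially the same route as the paper: you compute the block-diagonal part of $[A,B]$ for $A\in\mathfrak g_{-3}$, $B\in\mathfrak g_3$ (your ``projection trick'' is exactly the paper's step of adding appropriate elements $A'\in\mathfrak g_{-3}$, $B'\in\mathfrak g_3$ to kill the off-diagonal blocks, legitimate since those blocks visibly lie in $\mathfrak g_{-3}\oplus\mathfrak g_3$), exhibit five independent block-diagonal elements (the same ones as the paper up to sign), and conclude by the dimension count $5+5+5=15=\dim_{\Rset}\mathfrak{su}(3,1)$. The only cosmetic difference is that you organize the count through the centralizer $\mathfrak g_0$ of $H$ rather than directly, which changes nothing of substance.
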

\begin{proof}
With $A$, $B$ as above, we have
$$ AB-BA = \left( \begin{array}{cccc}
-rs-v_{-1}\bar u_{-1} -v_1 \bar u_1& is u_{-1} & isu_1 & 0  \\ 
- ir \bar v_{-1} & v_{-1} \bar u_{-1} - u_{-1} \bar v_{-1}   &v_{1} \bar u_{-1} - u_{1} \bar v_{-1} & is \bar u_{-1} \\
-ir \bar v_1 & v_{-1} \bar u_{1} - u_{-1} \bar v_{1}  & v_{1} \bar u_{1} - u_{1} \bar v_{1}  & is \bar u_1 \\ 
0 & -ir v_{-1} & -ir v_1 & r\,s + u_{-1}\bar v_{-1} + u_1 \bar v_1 \end{array} \right ) $$

After adding appropriate elements $A'$, $B'$ of $\mathfrak g_{-3}$, $\mathfrak g_3$ respectively, the matrix $AB -BA + A'+B'$ is equal to

$$ C = \left( \begin{array}{cccc}
-rs-v_{-1}\bar u_{-1} -v_1 \bar u_1& 0 & 0 & 0  \\ 
0 & v_{-1} \bar u_{-1} - u_{-1} \bar v_{-1}   &v_{1} \bar u_{-1} - u_{1} \bar v_{-1} & 0 \\
0 & v_{-1} \bar u_{1} - u_{-1} \bar v_{1}  & v_{1} \bar u_{1} - u_{1} \bar v_{1}  & 0 \\ 
0 & 0 & 0 & r\,s + u_{-1}\bar v_{-1} + u_1 \bar v_1 \end{array} \right ) $$

For $u_{-1} = u_1 = v_{-1} = v_1 =0$, $r=s=1$, we get

$$ C = \left( \begin{array}{cccc}
-1 & 0 & 0 & 0  \\ 
0 & 0   &0 & 0 \\
0 & 0 & 0 & 0 \\ 
0 & 0 & 0 & 1 \end{array} \right ) $$

For $ r=s=v_{-1} = u_1 = 0$, $v_1 = u_{-1} =1$, we get

$$ C = \left( \begin{array}{cccc}
0 & 0 & 0 & 0  \\ 
0 & 0   &1 & 0 \\
0 & -1 & 0 & 0 \\ 
0 & 0 & 0 & 0 \end{array} \right ) $$

For $ r=s=v_{-1} = u_1 = 0$, $v_1 = i$, $u_{-1} =1$, we get

$$ C = \left( \begin{array}{cccc}
0 & 0 & 0 & 0  \\ 
0 & 0   &i & 0 \\
0 & i & 0 & 0 \\ 
0 & 0 & 0 & 0 \end{array} \right ) $$

For $ r=s=v_{-1} = u_{-1} = 0$, $v_1 = 1$, $u_{1} = i $, we get

$$ C = \left( \begin{array}{cccc}
i & 0 & 0 & 0  \\ 
0 & 0   &0 & 0 \\
0 & 0 & -2i & 0 \\ 
0 & 0 & 0 & i \end{array} \right ) $$

For $ r=s=v_{1} = u_{1} = 0$, $v_{-1} = 1$, $u_{-1} = i $, we get

$$ C = \left( \begin{array}{cccc}
i & 0 & 0 & 0  \\ 
0 & -2i   &0 & 0 \\
0 & 0 & 0 & 0 \\ 
0 & 0 & 0 & i \end{array} \right ) $$

These five matrices, together with $\mathfrak g_{-3}$ and $\mathfrak g_3$, span a $15$-dimensional vector space. As ${\mathfrak su}(Q_{\frac 14})$, being isomorphic to ${\mathfrak su} (3,1)$, has also dimension $15$, the lemma is proved.

\end{proof}

\begin{corollary} \label{corZar14}
The intersection of the subgroup generated by $L_{-3}$, $L_{-1}$, $L_1$ and $L_3$ with $SL (\Cset ^{\A_4})$ is Zariski dense in $SU(Q_{\frac 14})$.

\end{corollary}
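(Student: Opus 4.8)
The plan is to identify the Zariski closure $G$ of the subgroup $\langle L_{-3},L_{-1},L_1,L_3\rangle \cap SL(\Cset^{\A_4})$ by computing its Lie algebra, and then to appeal to the connectedness of $SU(\Qa)\cong SU(3,1)$.

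First I would record two containments. Every generator $L_p$ preserves $\Qa$, and intersecting with $SL(\Cset^{\A_4})$ forces determinant $1$; hence $\langle L_{-3},L_{-1},L_1,L_3\rangle \cap SL(\Cset^{\A_4})\subset SU(\Qa)$, and since $SU(\Qa)$ is Zariski closed this gives $G\subset SU(\Qa)$. In the opposite direction, $\langle L_{-1},L_1,L_3\rangle$ and $\langle L_{-3},L_{-1},L_1\rangle$ are subgroups of $\langle L_{-3},L_{-1},L_1,L_3\rangle$, so the Zariski closures of their intersections with $SL(\Cset^{\A_4})$ are contained in $G$. By the computations made above, the identity component of each of these closures is the connected unipotent group $G_{-3}$, resp. $G_3$, with Lie algebra $\mathfrak g_{-3}$, resp. $\mathfrak g_3$; the only point needing care is the standard fact that passing to the finite-index determinant-$1$ subgroup leaves the identity component of the Zariski closure unchanged, so that $\mathfrak g_{-3}$ and $\mathfrak g_3$ are genuinely available inside $\mathrm{Lie}(G)$.

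The remainder is purely infinitesimal. Since $G_{-3},G_3\subset G$, the Lie algebra $\mathfrak g$ of $G$ contains both $\mathfrak g_{-3}$ and $\mathfrak g_3$, and, being a Lie algebra, it contains the Lie subalgebra they generate; by Lemma \ref{14lem3} this is all of $\mathfrak{su}(\Qa)$, so $\mathfrak g=\mathfrak{su}(\Qa)$, of dimension $15$. As $SU(\Qa)\cong SU(3,1)$ is connected of dimension $15$, a Zariski-closed subgroup $G\subset SU(\Qa)$ whose Lie algebra already fills $\mathfrak{su}(\Qa)$ must have identity component equal to $SU(\Qa)$, forcing $G=SU(\Qa)$; this is the asserted Zariski density. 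Once Lemma \ref{14lem3} is available there is no genuine obstacle: the corollary is a formal consequence, and the only bookkeeping worth isolating is the passage from the triples of generators to the Lie algebras $\mathfrak g_{\pm3}$ sitting inside $\mathrm{Lie}(G)$.
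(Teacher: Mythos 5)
Your proposal is correct and matches the paper's intended argument: the paper states the corollary without a written proof precisely because it is the formal consequence of Lemma \ref{14lem3} together with the identification of $G_{\pm 3}$ as the (identity components of the) Zariski closures coming from the two triples of generators, which is exactly the chain you spell out. Your explicit bookkeeping --- the containment $G\subset SU(Q_{\alpha})$, the finite-index/identity-component point, and the connectedness of $SU(3,1)$ --- is the right way to fill in what the authors left implicit.
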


This provides an appropriate starting point for the induction for $\alpha = 1/4$. The results in higher dimension follow.

\subsection{Exceptional case III: $\mathbf{\alpha = 1/6}$} \label{ss16}

The proof is identical to the previous case $\alpha = 1/4$ due to the following fact, an improvement (in generality) on Lemma \ref{14lem1}

\begin{proposition}\label{propRirreduc}
For any $\alpha \in (0,1/2)$, any $d \geq 2$, such that $(d+1) \alpha$ is not an integer, there is no non trivial $\Rset$-subspace of $\CAD$ which is invariant under every $L_p$, $p \in \Ad$.
\end{proposition}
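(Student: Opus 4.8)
The plan is to prove the stronger pair of statements in two stages: first that $\CAD$ is irreducible over $\Cset$ under the operators $L_p$, and then to upgrade this to $\Rset$-irreducibility. The starting observation is that each $L_p$ is a pseudo-reflection: from Subsection \ref{operators} one reads off that $L_p(v) = v + \phi_p(v)\,e_p$, where $\phi_p\in(\CAD)^*$ is the functional with $\phi_p(e_q) = -1$ for $q>p$, $\phi_p(e_q) = -\zeta$ for $q<p$, and $\phi_p(e_p) = -(1+\zeta)$. Since $\alpha\in(0,\tfrac12)$ we have $\zeta\neq 0$ and $\zeta\neq -1$, so \emph{every} value $\phi_p(e_q)$ is nonzero; this elementary remark drives the whole argument.

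For the complex irreducibility, let $W\subseteq\CAD$ be a nonzero $\Cset$-subspace invariant under all $L_p$. For each $p$ and $w\in W$ we have $\phi_p(w)\,e_p = L_p(w)-w\in W$, so for each $p$ either $e_p\in W$ or $W\subseteq\ker\phi_p$. If $e_{p_0}\in W$ for some $p_0$, then since $\phi_q(e_{p_0})\neq 0$ for every $q$, applying $L_q$ gives $e_q\in W$ for all $q$, whence $W=\CAD$. Otherwise $W\subseteq\bigcap_p\ker\phi_p$, and I would show this intersection is $\{0\}$, i.e. that the $\phi_p$ are linearly independent. Subtracting consecutive rows gives the clean relation $\phi_p-\phi_{p+1} = -e_p^* + \zeta\,e_{p+1}^*$, and the resulting row reduction yields $\det\big(\phi_p(e_q)\big)_{p,q} = (-1)^d\,(1+\zeta+\cdots+\zeta^d) = (-1)^d\,\frac{\zeta^{d+1}-1}{\zeta-1}$, which is nonzero precisely because $(d+1)\alpha\notin\Zset$ forces $\zeta^{d+1}\neq 1$. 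Hence $W=\{0\}$, and $\CAD$ is $\Cset$-irreducible.

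For the real statement, let $V$ be any $\Rset$-subspace invariant under all $L_p$. As the $L_p$ are $\Cset$-linear, they commute with multiplication by $i$, so $V\cap iV$ and $V+iV$ are $\Cset$-subspaces invariant under every $L_p$; by the previous step each equals $\{0\}$ or $\CAD$. The only configuration not immediately forcing $V\in\{0,\CAD\}$ is $V\cap iV=\{0\}$ together with $V+iV=\CAD$, i.e. $V$ is totally real with $\dim_\Rset V = d$. I would exclude this: a real basis of such a $V$ is a $\Cset$-basis of $\CAD$ in which every $L_p$ is a real matrix, so the spectrum of each $L_p$ is invariant under complex conjugation. But $L_p$ has the simple eigenvalue $-\zeta$, which is non-real (since $\alpha\neq 0,\tfrac12$); its conjugate $-\bar\zeta=-\rho$ is distinct from $-\zeta$ (as $\rho^2\neq 1$) and from the only other eigenvalue $1$ (as $\rho\neq -1$), so it is not an eigenvalue of $L_p$ — a contradiction. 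Thus no totally real invariant subspace exists, and $V\in\{0,\CAD\}$.

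The main obstacle is the single point where the hypothesis $(d+1)\alpha\notin\Zset$ genuinely enters: the linear independence of the $\phi_p$, equivalently the statement that the $L_p$ have no common fixed vector. This reduces to checking that the cyclotomic determinant $1+\zeta+\cdots+\zeta^d$ does not vanish, which is exactly the condition $\zeta^{d+1}\neq 1$. Everything else is either formal pseudo-reflection bookkeeping or the standard passage from $\Cset$- to $\Rset$-irreducibility via $V\cap iV$ and $V+iV$, combined with the elementary but crucial remark that the spectrum of $L_p$ is not symmetric under complex conjugation.
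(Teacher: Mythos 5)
Your proof is correct, and while its complex-irreducibility core runs parallel to the paper's argument, your treatment of the real structure is genuinely different. The paper works with $\Rset$-subspaces directly in a single pass: its key observation is that if a real invariant subspace $W$ is not contained in the fixed hyperplane $\mathcal H_p = \ker(L_p-\mathrm{id})$, then $W \ni w - L_p(w) = (1+\zeta)t\,e_p$ and $W \ni L_p\bigl((1+\zeta)t\,e_p\bigr) = -\zeta(1+\zeta)t\,e_p$, and since $\zeta\notin\Rset$ these two vectors span $\Cset e_p$ over $\Rset$ --- so real invariance already forces complex lines inside $W$, after which the same propagation $e_p \mapsto e_q$ that you use finishes the proof. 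You instead establish $\Cset$-irreducibility first (your dichotomy ``$e_p\in W$ or $W\subseteq\ker\phi_p$'' plus propagation is the same skeleton), then invoke the standard complexification trick with $V\cap iV$ and $V+iV$, and exclude the remaining totally-real case by noting that a real form would make the characteristic polynomial $(\lambda-1)^{d-1}(\lambda+\zeta)$ of $L_p$ have real coefficients, contradicting $\zeta\notin\Rset$. Both routes hinge on exactly the same two non-degeneracy facts --- $\zeta^{d+1}\neq 1$ kills the common fixed space, and $\zeta\notin\Rset$ rules out real-but-not-complex invariant subspaces --- but where the paper verifies the first by chaining the pairwise relations $x_p = \zeta x_{p+2}$ around $\Ad$, you compute the determinant $\det\bigl(\phi_p(e_q)\bigr) = (-1)^d(1+\zeta+\dots+\zeta^d)$, which is equivalent and equally clean. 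The paper's argument is shorter and avoids any case analysis; yours isolates a reusable $\Cset$-irreducibility statement and a general principle (an operator whose spectrum is not conjugation-symmetric admits no invariant real form), which is a fair trade in transparency for a little extra length.
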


\begin{proof}
The kernel of $L_p - {\rm id}$ is the hyperplane 
$$ \mathcal H_p = \{ \zeta  \sum_{q\leq p} x_q + \sum_{q \geq p} x_q =0 \}.$$
The other eigenvalue of $L_p$ is equal to $-\zeta$ and is simple. The associated eigenspace is $\Cset e_p$.
We claim that the intersection $\bigcap_{p \in \Ad} \mathcal H_p$ is trivial (if $\zeta^{d+1} \ne 1$). Indeed, the intersection $\mathcal H_p \cap \mathcal H_{p+2}$ is contained in $\{ x_p = \zeta x_{p+2} \}$ for $p< d-1$, and the intersection 
$\mathcal H_{d-1} \cap \mathcal H_{1-d}$ is contained in $\{ \zeta x_{1-d} = \zeta^{-1} x_{d-1} \}$.
\par
Let $p \in \Ad$, and let $W$ be a $\Rset$-subspace of $\CAD$ which is invariant under $L_p$. If $W$ is not contained in $\mathcal H_p$, it contains $\Cset e_p$: indeed, if $w \in W$ has the form $h + te_p$ with $h \in \mathcal H_p$ and $t \ne 0$, then $w - L_p(w)  = (1+\zeta)t e_p $ belongs to $W$ and $\Cset e_p \subset W$ as $\zeta$ is not real.

\par
Assume now that $W$ is a $\Rset$-subspace of $\CAD$ which is invariant under 
all $L_p$, $p \in \Ad$. If $W$ is contained in $\mathcal H_p$ for every $p \in \Ad$, it is equal to $\{0 \}$. Otherwise, there exists $p \in \Ad$ such that $\Cset e_p \subset W$. 
For $q \ne p$, $e_p - L_q(e_p)$ is equal to $e_q$ or $\zeta e_q$, therefore we have also $\Cset e_q \subset W$. Then $W = \CAD$.
\end{proof}


\end{document}